

\documentclass[12pt]{amsart}
%
\usepackage{amssymb,amsmath,amsthm}  
\usepackage{graphicx}
\usepackage{bm}
\usepackage{enumerate}
\usepackage{braket}
\usepackage{amsaddr}
\usepackage{amscd}
\usepackage{colordvi}
\usepackage[mathscr]{eucal}
\usepackage{epstopdf}
\usepackage{hyperref}
\usepackage{url}
\usepackage{subcaption}
\usepackage{cleveref}
\captionsetup[subfigure]{subrefformat=simple,labelformat=simple}

\numberwithin{equation}{section}
\newtheorem{theorem}{Theorem}[section] 

\newtheorem{lemma}[theorem]{Lemma}

\newtheorem{assumption}[theorem]{Assumption}

\newcommand\norm[1]{\left\lVert#1\right\rVert}
\newcommand{\tu}{\textup}

\newcommand{\dd}{\displaystyle}
\newcommand{\bfa}[1]{\boldsymbol{#1}} 			%

\newcommand{\e}{\epsilon}
\setcounter{secnumdepth}{3}

\usepackage[numbers]{natbib}
\usepackage{filecontents}
\usepackage[latin9]{inputenc}
\usepackage[letterpaper]{geometry}
\geometry{verbose}
\usepackage{pifont}
\usepackage{float}
\usepackage{color}
\usepackage{adjustbox}
\usepackage{diagbox}
\usepackage{multirow}
\usepackage[makeroom]{cancel}
\usepackage{array}
\usepackage{mathrsfs}
\usepackage{tikz}
\usepackage{comment}
\usepackage{csquotes}

\graphicspath{{./Figures/}}

\usepackage{amsfonts}\setlength{\topmargin}{0mm} 

\setlength{\textheight}{220mm}
\setlength{\oddsidemargin}{0mm}
\setlength{\evensidemargin}{\oddsidemargin}
\setlength{\textwidth}{160mm} \setlength{\baselineskip}{18pt}
\setlength{\abovedisplayskip}{0pt}
\setlength{\belowdisplayskip}{0pt}
\setlength{\abovedisplayshortskip}{0pt}
\setlength{\belowdisplayshortskip}{0pt}






\linespread{1}
\usepackage{color}
\usepackage{adjustbox}
\usepackage{diagbox}
\definecolor{black}{rgb}{0,0,0}

\definecolor{red}{rgb}{1,0,0}

\definecolor{blue}{rgb}{0,0,1}












\numberwithin{equation}{section}

\renewcommand{\div}{\mathop{\rm div}\nolimits}

\newcommand{\ep}{\epsilon}

\newcommand{\todo}[1]{{\color{red}{#1}}}

\newcommand{\dx}{ \mathrm{d}x}
\newcommand{\dy}{ \mathrm{d}y}
\newcommand{\dt}{ \mathrm{d}t}

\newcommand{\beq}{\begin{equation}}
\newcommand{\eeq}{\end{equation}}
\newcommand{\beqq}{\begin{equation*}}
\newcommand{\eeqq}{\end{equation*}}
\newcommand{\beqas}{\begin{eqnarray*}}
\newcommand{\eeqas}{\end{eqnarray*}}
\newcommand{\bsp}{\begin{split}}
\newcommand{\eesp}{\end{split}}




\makeatother

\usepackage[english]{babel}

\author[P\MakeLowercase{ark}, C\MakeLowercase{heung},  M\MakeLowercase{ai}, H\MakeLowercase{oang}]
{J\MakeLowercase{un} S\MakeLowercase{ur} R\MakeLowercase{ichard} P\MakeLowercase{ark}, 
S\MakeLowercase{iu} W\MakeLowercase{un} C\MakeLowercase{heung}, 
 T\MakeLowercase{ina} M\MakeLowercase{ai}$^*$,
 V\MakeLowercase{iet}  H\MakeLowercase{a} H\MakeLowercase{oang}}
\date{\today}

\title[M\MakeLowercase{ultiscale simulations for upscaled multi-continuum flows}]
{\textsf{\LARGE M\MakeLowercase{ultiscale simulations for upscaled multi-continuum flows}}}


\begin{document}

\begin{abstract}

We consider in this paper a challenging problem of simulating fluid flows, in complex multiscale media possessing multi-continuum background.  
As an effort to handle this obstacle, model reduction is employed.  In \cite{rh2}, homogenization was nicely applied, to find effective coefficients and homogenized equations (for fluid flow pressures) of a dual-continuum system, with new convection terms and negative interaction coefficients.  However, some degree of multiscale still remains.  This motivates us to propose the generalized multiscale finite element method (GMsFEM), which is coupled with the dual-continuum homogenized equations, toward speeding up the simulation, improving the accuracy as well as clearly representing the interactions between the dual continua.  In our paper, globally, each continuum is viewed as a system and connected to the other throughout the domain.  
We take into consideration the flow transfers between the dual continua and within each continuum itself.  Such multiscale flow dynamics are modeled by the GMsFEM, which systematically generates either uncoupled or coupled multiscale basis (to carry the local characteristics to the global ones), via establishing local snapshots and spectral decomposition in the snapshot space.  As a result, we will work with a system of two equations coupled with some interaction terms, and each equation describes one of the dual continua on the fine grid.  Convergence analysis of the proposed GMsFEM is accompanied with the numerical results, which support the favorable outcomes.

\end{abstract}

\maketitle

\noindent \textbf{Keywords.}  Generalized multiscale finite element method; Multi-continuum; Upscaling

\vskip10pt

\noindent \textbf{Mathematics Subject Classification.} 65N30, 65N99

\vfill

\noindent Jun Sur Richard Park $\cdot$ Siu Wun Cheung

\noindent Department of Mathematics, Texas A\&M University, College Station, TX 77843, USA

\noindent E-mail: pjss1223@math.tamu.edu (Park); tonycsw2905@math.tamu.edu (Cheung) \\

\noindent Tina Mai\textsuperscript{*} (corresponding author)

\noindent Institute of Research and Development, 
Duy Tan University, Da Nang 550000, Viet Nam

\noindent E-mail: maitina@duytan.edu.vn (Mai)\\

\noindent Viet Ha Hoang

\noindent Division of Mathematical Sciences, School of Physical and Mathematical Sciences\\
Nanyang Technological University, Singapore 637371 

\noindent E-mail: vhhoang@ntu.edu.sg (Hoang)


\newpage

\tableofcontents

\section{Introduction}\label{intro}


Fluid flow simulation was early known to be based on the concept of porous medium as a single continuum.  However, 
in nature, a porous medium (as stratum or fissured rock) may possess some degree of fracturing.  This hence motivated the notion of dual continua, or more generally, multicontinua (see \cite{baren}, for instance),
thanks to mean characteristics (porosity, permeability, pressure, ...)\ of the
media and flow.  For example (see \cite{baren}), a dual-continuum background can consist of a matrix (first continuum) and a system of naturally connected fractures (second continuum).  
In such heterogeneous media, the simulation of flow is hard, mainly because of the distinct properties of continua, multiple scales and high contrast. 
In addition, mass would transport among continua and different scales in various forms.  


To handle those difficulties in multi-continuum flow modeling, a straightforward approach is using fine-grid simulation, in several steps.  First, a locally fine grid is established.  Then, the flow equations are discretized on that fine grid, and a  global solution is derived from the set of local solutions.  This approach can be carried out under well-known frameworks, such as the Finite Element Method (FEM) in \cite{fem} and Finite Volume Method (FVM).  


Nevertheless, due to the intricate heterogeneity of the media, especially, multiple scales and high contrast, some type of model reduction is needed for flow simulation.  Common methods involve partitioning the domain of interest into coarse-scale grid blocks, where effective properties in each coarse block are calculated (\cite{homod}).  This computation (in standard upscaling methods based on homogenization) utilizes the fine-scale solutions of local problems in each coarse block or representative volume.  Such a scheme, however, may not reflex multiple crucial modes in each coarse block (including the interaction of continua).  


That resulted in the multi-continuum strategies (\cite{baren, arbogast}) on coarse grid.  Physically, the flow between different continua is described by considering each continuum as a system over the whole domain.  
In fine grid, different continua are adjacent.  In coarse grid, they co-present (via mean characteristics \cite{baren}) at every point of the domain and interact with each other.  Mathematically, a number of equations are established for each coarse block, and each equation represents one of the continua on the fine grid.  For example, in fractured reservoir, the flow equations for the matrix and the system of fractures are written
separately with some interaction terms.  Those interaction terms are coupled (based on the mass conservation law), leading to a system of coupled
equations.  For this purpose, even when each continuum is not topologically connected, we assume that it is connected to the other (throughout the domain and the type of the coupling), provided that it has solely global (not local) effects. 


In these settings, we now discuss the dual-continuum background in our paper.  The first dual porosity model was introduced by Barenblatt, for modeling flow through naturally fissured rock \cite{baren}.  In his work,
two continua were suggested to delineate high and low porosity continua, that is, matrix and system of connected fractures, respectively.  An example about some early work on dual continua based on \cite{baren} is \cite{arbogast} (1990), where homogenization theory was applied, to obtain a general form of the double porosity model of single phase flow, within a naturally fractured reservoir.  Both intra and inter flow transports are modeled for each continuum.  In this paper, the dual-continuum background is in any general form, where the above strategies can be applied.



To overcome the limits of homogenization technique as well as to integrate the heterogeneity of the multicontinua, and to reduce the computational cost, based on the multiscale finite element method (MsFEM) as in \cite{Ms, Msnon}, the generalized multiscale finite element method (GMsFEM) was developed (\cite{G1}).  This method allows one to systematically construct multiple multiscale basis, by adding new degrees of freedom (basis functions) in each coarse block.  These new basis functions are calculated by constructing the local snapshots and performing
local spectral decomposition in the snapshot space.  That is, the producing eigenfunctions can convey the local characteristics to the global ones, via the multiscale basis functions in coarse grid.  


The GMsFEM has been successfully applied to a number of multi-contimuum problems.  Recent example is about shale gas transport in dual-continuum background consisting of organic and inorganic materials (\cite{organic17}).  In this spirit, a third continuum can be added to dual continua as an extension (see \cite{Tony11}, for instance).  More generally, flow simulation in heterogeneously varying multicontinua was investigated (see \cite{mcontinua17, ericmultirichard19a, ericmultiporoelastic19a}, for instance).  

Additionally, there are various and active studies on new model reduction techniques and related numerical methods for multi-continuum systems. They include constraint energy minimizing (CEM) GMsFEM (\cite{cheung2018constraint}) and non-local multi-continuum method (NLMC)(\cite{vasilyeva2019nonlocal,chung2018non,vasilyeva2018three}). These approaches also effectively handle high-contrast as well as multiscale features in multi-continuum media.   



Herein, we develop the GMsFEM for an upscaled multi-continuum system.  That is, as a special case, which we are considering in this paper, multicontinua can occur at many scales.  The big picture is that starting from microscopic scale, the multicontinua are upscaled via homogenization, to reach intermediate scale.  At this stage, the multicontinua still possess some degree of multiscale.  Hence, they are then simulated by the GMsFEM, to arrive at coarse-grid (macroscopic) level.

More specifically, in \cite{park2019hierarchical, rh2} by Park and Hoang, homogenization of multi-continuum systems has been investigated.  Especially, in \cite{rh2}, homogenization has been developed for a two-scale dual-continuum system (for fluid flow pressures), which is new because the given coupled
interaction terms are not uniformly positive and scaled as $\mathcal{O} (1/ \epsilon)$, which is the inverse of the micro-scale $\e$.  The arising homogenized equations still have some grade of multiscale, which motivates our further study on numerical multiscale simulation, for a coupled dual-continuum system with new convection terms and negative interaction coefficients.

The multiscale technique we use to upscale that resulted dual-continuum system is the GMsFEM.  The novelty in our paper is the fine-grid scale, which is the intermediate scale resulting from that homogenization step, so it is different from the fine-grid scale of the GMsFEM in \cite{mcontinua17}.  Here, we derive a combination of the offline GMsFEM and the multi-continuum approaches.  

The convergence analysis is presented for two cases: uncoupled and coupled multiscale basis of the GMsFEM.  For each case, we compare the reference weak solution with the presented coarse-grid approximation (multiscale solution).  In the first case (called uncoupled GMsFEM), multiscale basis functions will be constructed for each continuum separately, by considering only the permeability and disregarding the transfer functions.  Then, we apply the GMsFEM described above.  In the second case (called coupled GMsFEM), multiscale basis functions will be constructed by solving a coupled problem for snapshot space and carrying out a spectral decomposition. From this step, GMsFEM is also utilized.   

In numerical simulations, we focus on coupling the GMsFEM with the multi-continuum approach.  
The reference fine-scale solution is compared with the multiscale solution.  Our numerical results (after using both uncoupled and coupled multiscale basis functions) show that the GMsFEM is able to combine with the multi-continuum approach and gives solution with good accuracy (that is even better with coupled multiscale basis) using few basis functions.  Also, our numerical results demonstrate that the solution obtained via the coupled GMsFEM is more accurate than the one obtained via the FEM.  

The organization of the paper is as follows.  In Section \ref{fus}, we introduce function spaces.  Section \ref{pform} is about problem formulation, where we show the existence and uniqueness of weak solution, and provide fine-scale finite element discretization.  In Section \ref{view}, an overview of the GMsFEM is given, to introduce coarse and fine grids as well as uncoupled and coupled GMsFEM.  Section \ref{ca} is devoted to convergence analysis, for both uncoupled and coupled GMsFEM.  In Section \ref{numer}, numerical results are presented.  Conclusions are summed up in Section \ref{conclude}.




\section{Function spaces}\label{fus}
Let $\Omega$ be our computational domain in $\mathbb{R}^2$.  The spaces of functions, vector fields in $\mathbb{R}^2$, and $2 \times 2$ matrix fields defined over $\Omega$ are respectively denoted by italic capitals (e.g., $L^2(\Omega)$), 
boldface Roman capitals (e.g., $\bfa{V}$), 
and special Roman capitals (e.g., $\mathbb{S}$).  

Consider the space $V: = H_0^1(\Omega) = W_0^{1,2}(\Omega)$.  Its dual space 
(also called the adjoint space), which consists of continuous linear functionals on $H_0^1(\Omega)$, is denoted by 
$H^{-1}(\Omega)$, and 
the value of a functional $f \in H^{-1}(\Omega)$ at a point 
$v \in H_0^1(\Omega)$ is 
denoted by the inner product $\langle f,v \rangle $.  

The Sobolev norm $\| \cdot \|_{W_0^{1,2}(\Omega)}$ is of the form 
\[\|v\|_{W_0^{1,2}(\Omega)} = \left(\|v\|^2_{L^2(\Omega)} + 
\|\nabla v \|^2_{\bfa{L}^2(\Omega)}\right)^{\frac{1}{2}}\,.\]
Here, $\| \nabla v \|_{\bfa{L}^2(\Omega)}:= \| | \nabla v | \|_{\bfa{L}^2(\Omega)}\,,$ where 
$| \nabla v|$ denotes the Euclidean norm of the $2$-component vector-valued function 
$ \nabla v$; and for $\bfa{v} = (v_1,v_2)$, $\| \nabla \bfa{v}\|_{\mathbb{L}^2(\Omega)}:= \| | \nabla \bfa{v}| \|_{\mathbb{L}^2(\Omega)}\,,$ where 
$| \nabla \bfa{v}|$ denotes the Frobenius norm of the $2 \times 2$ matrix $\nabla \bfa{v}$.  
We recall that the Frobenius norm on $\mathbb{L}^2(\Omega)$ is defined by 
$| \bfa{X} |^2 : = \bfa{X} \cdot \bfa{X} = \text{tr}(\bfa{X}^{\text{T}} \bfa{X})\,.$

The dual norm to $\| \cdot \|_{H_0^1(\Omega)}$ is $\| \cdot \|_{H^{-1}(\Omega)}$, i.e.,
\[\| f \|_{H^{-1}(\Omega)} = \sup_{v \in H_0^1(\Omega)}
\frac{|\langle f,v \rangle|}{\|v\|_{H_0^1(\Omega)}}\,.\]
 

For every $1 \leq r < 
\infty$, we use $\bfa{L}^r(0,T;\bfa{X})$ for the Bochner space with the norm 
\[\|\bfa{w}\|_{\bfa{L}^r(0,T;\bfa{X})} := \left(\int_0^T \|\bfa{w} \|_{\bfa{X}}^r \, \dt\right)^{1/r} < + \infty\,, 
\]
\[\|\bfa{w}\|_{\bfa{L}^{\infty}(0,T;\bfa{X})}: = \sup_{0 \leq t \leq T} \|\bfa{w}\|_{\bfa{X}}  < + \infty\,,\]
where $(\bfa{X}, \| \cdot \|_{\bfa{X}})$ is a Banach space.  Also, we define 
\[\bfa{H}^1(0,T; \bfa{X}):= \left \{ \bfa{v} \in \bfa{L}^2(0,T;\bfa{X}) \, : \, \partial_t \bfa{v} \in \bfa{L}^2(0,T;\bfa{X}) \right \}\,.\]

To shorten notation, we denote the space for $\bfa{u}(\cdot,t) = (u_1(\cdot,t), u_2(\cdot,t))$ by $\bfa{V} = V \times V = H^1_0(\Omega) \times H^1_0(\Omega)$, where $t \in [0,T], T > 0$.  

\section{Problem formulation}\label{pform}
In \cite{park2019hierarchical, rh2}, Park and Hoang have studied homogenization of multi-continuum systems (see \cite{baren,warren1963behavior,kazemi1976numerical,wu1988multiple,pruess1982fluid,mcontinua17}, for instance).  Specially, in \cite{rh2}, homogenization was developed for a two-scale dual-continuum system
\begin{equation}
\label{eq:main1_jrp}
 \begin{split}
{\mathcal C}_{11}^\epsilon(\bfa{x}) \dfrac{\partial u_1^\epsilon(\bfa{x},t)}{\partial t}=\text{div}(\kappa_1^\epsilon(\bfa{x})\nabla u_1^\epsilon(\bfa{x},t)) + \frac{1}{\e}Q^\epsilon(\bfa{x})(u_2^\epsilon(\bfa{x},t)-u_1^\epsilon(\bfa{x},t)) + f_1\,,\\
{\mathcal C}_{22}^\epsilon(\bfa{x}) \dfrac{\partial u_2^\epsilon(\bfa{x},t)}{\partial t}=\text{div}(\kappa_2^\epsilon(\bfa{x}) \nabla u_2^\epsilon(\bfa{x},t)) + \frac{1}{\e}Q^\epsilon(\bfa{x})(u_1^\epsilon(\bfa{x},t)-u_2^\epsilon(\bfa{x},t)) + f_2\,,
 \end{split}
 \end{equation} 
 where $\bfa{x} \in \Omega \subset \mathbb{R}^2$, $f_1, f_2 \in L^2(\Omega)$,  $\ep$ represents the microscopic scale of the local variation, and the interaction terms are scaled as $\mathcal{O}(\e^{-1})$ (see \cite{baren,warren1963behavior,kazemi1976numerical,wu1988multiple,pruess1982fluid,mcontinua17}, for instance). 
 Let $Y$ be a unit cube in $\mathbb{R}^2$.
The coefficients ${\mathcal C}_{ii}^\epsilon$, $\kappa_i^\epsilon$ and $Q^\epsilon$ are defined as
\begin{equation}
\label{eq:coeff}
 \begin{split}
{\mathcal C}_{ii}^\epsilon(\bfa{x}) = {\mathcal C}_{ii}\left(\bfa{x},\frac{\bfa{x}}{\epsilon}\right),\ \kappa_i^\epsilon(\bfa{x}) = \kappa_i \left(\bfa{x},\frac{\bfa{x}}{\epsilon}\right) \text{ and } 
\ Q^\epsilon(\bfa{x}) = Q\left(\bfa{x},\frac{\bfa{x}}{\epsilon}\right), \enspace \ \ i = 1,2,
 \end{split}
 \end{equation}
where ${\mathcal C}_{ii}(\bfa{x},\bfa{y})$, $\kappa_i(\bfa{x},\bfa{y})$ and $Q(\bfa{x},\bfa{y})$ are $Y$-periodic functions from $\Omega \times Y$.  The following homogenized equations of the system (\ref{eq:main1_jrp}) were derived in \cite{rh2}:
\begin{equation}
\label{eq:main_hom_jrp}
\begin{split}
\int_Y {\mathcal C}_{11} \, \dy \dfrac{\partial u_{1,0}}{\partial t}
&= \div (\kappa_1^*\nabla u_{1,0}) 
+\div \left[\left(\int_Y \kappa_1 \nabla_y M_1 \, \dy\right)(u_{2,0}-u_{1,0})\right]
\\ &+ \sum_{i=1}^2 \left[\left(\int_Y QN^i_2 \, \dy\right)\frac{\partial u_{2,0}}{\partial x_i} - \left(\int_Y QN^i_1 \, \dy\right)\frac{\partial u_{1,0}}{\partial x_i}\right] \\
&- \left(\int_YQ(M_1+M_2) \, \dy\right)(u_{2,0}-u_{1,0}) + f_1\,,\\
\int_Y{\mathcal C}_{22} \dy \dfrac{\partial u_{2,0}}{\partial t} 
&= \div (\kappa_2^*\nabla u_{2,0}) 
+\div \left[\left(\int_Y \kappa_2 \nabla_y M_2 \, \dy\right)(u_{1,0}-u_{2,0})\right]
\\& + \sum_{i=1}^2 \left[\left(\int_Y QN^i_1 \, \dy\right)\frac{\partial u_{1,0}}{\partial x_i} - \left(\int_Y QN^i_2 \, \dy\right)\frac{\partial u_{2,0}}{\partial x_i}\right] \\
&- \left(\int_YQ(M_1+M_2) \, \dy\right)(u_{1,0}-u_{2,0})+ f_2\,,
\end{split}
\end{equation}

\noindent where $\kappa_1^*$ and $\kappa_2^*$ are symmetric and positive definite, $f_1$ and $f_2$ are in $L^2(\Omega)$. The coefficients $\displaystyle \int_Y \kappa_i \nabla_y M_i \, \dy$ and $\displaystyle \int_Y QN^i_{j} \, \dy$ (where $i,j= 1,2$) can be either positive or negative, and $\left(-\displaystyle \int_YQ(M_1+M_2) \, \dy\right)$ is uniformly negative in $\Omega$.
These homogenized equations still possess some degree of multiscale.  This motivates our research (herein) on numerical multiscale simulation for a dual-continuum system with general convection and reaction terms:
\beq
\label{eq:main1*}
\begin{split}
& {\mathcal C}_{11}(\bfa{x}) \frac{\partial u_{1}(\bfa{x},t)}{\partial t} -\div (\kappa_1 (\bfa{x}) \nabla u_1(\bfa{x},t)) + \bfa{b}_1(\bfa{x})  \cdot \nabla(u_1(\bfa{x},t)-u_2(\bfa{x},t)) \\
& \; + \; Q_1(\bfa{x}) (u_1(\bfa{x},t)-u_2(\bfa{x},t)) = f_1(\bfa{x})\,,\\
& {\mathcal C}_{22}(\bfa{x}) \frac{\partial u_{2}(\bfa{x},t)}{\partial t} -\div (\kappa_2 (\bfa{x}) \nabla u_2(\bfa{x},t)) + \bfa{b}_2(\bfa{x})  \cdot \nabla(u_2(\bfa{x},t)-u_1(\bfa{x},t)) \\
& \; + \; Q_2(\bfa{x}) (u_2(\bfa{x},t)-u_1(\bfa{x},t)) = f_2(\bfa{x})\,,
\end{split}
\eeq
in $\Omega \times (0,T)$, with the Dirichlet boundary condition $u_1(\bfa{x}) = u_2(\bfa{x}) = 0$ on $\partial \Omega\times (0,T)$, and with suitable initial conditions (when $t = 0, T$), given $\bfa{f}(\bfa{x}) = (f_1(\bfa{x}), f_2(\bfa{x})) \in \bfa{L}^{2}(\Omega)$. We will show later that (\ref{eq:main1*}) has a unique solution under certain conditions.  One of the main difficulties as well as contributions of our paper is that in (\ref{eq:main1*}), we use different $Q_1$ and $Q_2$ rather than the same $Q$ in (\ref{eq:main_hom_jrp}).

The variational form of (\ref{eq:main1*}) is as follows:  Find $\bfa{u} = (u_1,u_2) \in \bfa{V}$ such that
\beq
\label{eq:vartt}
\begin{split}
& \int_{\Omega}{\mathcal C}_{11} \frac{\partial u_{1}}{\partial t}\phi_1 \, \dx
+\int_{\Omega} \kappa_1(\bfa{x})  \nabla u_{1} \cdot \nabla \phi_1 \, \dx
+ \int_\Omega \bfa{b}_1(\bfa{x})  \cdot \nabla(u_{1}-u_{2}) \phi_1\, \dx\\
& \; +\int_{\Omega} Q_1(\bfa{x}) (u_{1}-u_{2})\phi_1  \, \dx 
 = \int_{\Omega} f_1 \phi_1 \, \dx \,,\\
 & \int_{\Omega}{\mathcal C}_{22} \frac{\partial u_{2}}{\partial t}\phi_2 \, \dx
+\int_{\Omega} \kappa_2(\bfa{x}) \nabla u_{2}\cdot \nabla \phi_2 \, \dx
 + \int_{\Omega} \bfa{b}_2(\bfa{x})  \cdot \nabla(u_{2}-u_{1})\phi_2  \, \dx\\
 & \; + \int_{\Omega} Q_2(\bfa{x}) (u_{2}-u_{1}) \phi_2  \, \dx 
 =\int_{\Omega} f_2 \phi_2 \, \dx\,,
\end{split}
\eeq
for all $\bfa{\phi} = (\phi_1, \phi_2) \in \bfa{V}$, for a.e.\ $t \in (0,T)$.
Before studying this problem, we first consider the following interesting static dual-continuum system:
\beq
\label{eq:main1}
\begin{split}
&-\div (\kappa_1(\bfa{x}) \nabla u_1 (\bfa{x})) + \bfa{b}_1(\bfa{x})  \cdot \nabla(u_1(\bfa{x})-u_2(\bfa{x})) + Q_1(\bfa{x}) (u_1(\bfa{x})-u_2(\bfa{x})) = f_1(\bfa{x})\,,\\
&-\div (\kappa_2 (\bfa{x}) \nabla u_2(\bfa{x})) + \bfa{b}_2(\bfa{x})  \cdot \nabla(u_2(\bfa{x})-u_1(\bfa{x})) + Q_2(\bfa{x}) (u_2(\bfa{x})-u_1(\bfa{x})) = f_2(\bfa{x})\,,
\end{split}
\eeq
in $\Omega$, with the Dirichlet boundary condition $u_1(\bfa{x}) = u_2(\bfa{x}) = 0$ on $\partial \Omega$, where
$\kappa_1(\bfa{x})$ and $\kappa_2(\bfa{x})$ are permeability coefficients in high contrast media, provided $\bfa{f}(\bfa{x}) = (f_1(\bfa{x}), f_2(\bfa{x})) \in \bfa{L}^{2}(\Omega)$. 

For later use, we define 
\begin{equation}\label{syasy}
 \bfa{b}_s = \dfrac{\bfa{b}_1+\bfa{b}_2}{2}\,,   \quad \bfa{b}_a = \dfrac{\bfa{b}_1-\bfa{b}_2}{2}\,, \quad Q_s = \dfrac{Q_1 + Q_2}{2}\,, \quad Q_a = \dfrac{Q_1 - Q_2}{2}\,,
\end{equation}
in variable $\bfa{x}\,.$

Throughout this section, we assume the following. 
\begin{assumption}
\label{bdd}
There are some positive constants $\bar{\mathcal{C}}, \underline{\mathcal{C}}$, $\bar{ b}$, $\bar{Q}$ and $\bar{\kappa}, {\underline{\kappa}}$ such that $\bar{\mathcal{C}} \geq \mathcal{C}_{ii}\geq \underline{\mathcal{C}}$, $|\bfa{b}_i | \leq \bar{b} $, $ |Q_{i}| \leq \bar{Q}$, $ \bar{\kappa} \geq \kappa_i \geq \underline{\kappa}$ ($i=1,2$), and we further assume that 
$1 > \bar{ b} / \sqrt{\underline{\kappa}}\,,$  $|\bfa{b}_s| \gg |\bfa{b}_a|$ and $|Q_s| \gg |Q_a|\,.$
\end{assumption}
The system \ref{eq:main1} can be written in the variational form
\beq\label{eq:var}
\begin{split}
& \int_{\Omega} \kappa_1 (\bfa{x}) \nabla u_1(\bfa{x}) \cdot \nabla \phi_1(\bfa{x}) \, \dx
+ \int \bfa{b}_1  \cdot \nabla(u_1(\bfa{x})-u_2(\bfa{x})) \phi_1(\bfa{x}) \, \dx \\
 & \; + \; \int_{\Omega} Q_1(\bfa{x}) (u_1(\bfa{x})-u_2(\bfa{x}))\phi_1(\bfa{x})  \, \dx 
 = \int_{\Omega} f_1(\bfa{x}) \phi_1(\bfa{x}) \, \dx\,,\\
& \int_{\Omega} \kappa_2 (\bfa{x}) \nabla u_2(\bfa{x})\cdot \nabla \phi_2(\bfa{x}) \, \dx
 + \int_{\Omega} \bfa{b}_2  \cdot \nabla(u_2(\bfa{x})-u_1(\bfa{x}))\phi_2(\bfa{x})  \, \dx \\
& \; + \; \int_{\Omega} Q_2(\bfa{x}) (u_2(\bfa{x})-u_1(\bfa{x})) \phi_2(\bfa{x})  \, \dx 
 =\int_{\Omega} f_2(\bfa{x}) \phi_2(\bfa{x}) \, \dx\,,
\end{split}
\eeq
for all $\phi_1(\bfa{x}),\ \phi_2(\bfa{x}) \in V $.  We define a norm $||\cdot||_a$ on the space $\bfa{V}$ as  
\beq\label{abi}
||(u_1,u_2)||_a = \left(\left \|\kappa^{\frac{1}{2}}_1\nabla u_1\right\|_{L^2(\Omega)}^2+\left \|\kappa^{\frac{1}{2}}_2\nabla u_2 \right \|_{L^2(\Omega)}^2\right)^{\frac{1}{2}}\,.
\eeq
We define a bilinear form $b(\cdot, \cdot): \bfa{V} \times \bfa{V} \longrightarrow \mathbb{R} $ as
\beq\label{bbi}
\begin{split}
b((u_1, u_2 ),(v_1, v_2)) 
&= \int_{\Omega} \kappa_1 \nabla u_1 \cdot \nabla v_1 \, \dx + \int_{\Omega} \kappa_2 \nabla u_2 \cdot\nabla v_2 \, \dx \\
& \; +\int_{\Omega}  \bfa{b}_1 \cdot \nabla (u_1-u_2) v_1 \, \dx
+\int_{\Omega}  \bfa{b}_2 \cdot \nabla (u_2-u_1) v_2 \, \dx\\
& \; +\int_{\Omega}  Q_1 (u_1-u_2) v_1 \, \dx +\int_{\Omega}  Q_2  (u_2-u_1) v_2 \, \dx \,.
\end{split}
\eeq
\subsection{Existence and uniqueness of weak solutions}
In this section, we will show that each of the systems (\ref{eq:var}) and (\ref{eq:vartt}) has a unique solution under certain conditions. 
\begin{lemma}
\label{bddgarding}
Under Assumption \ref{bdd}, there are some positive constants $K$, $\alpha$ and $C_b$ such that for all $\bfa{u} = (u_1,u_2), \, \bfa{v} = (v_1,v_2) 
\in \bfa{V}$, we have 
\begin{align}
b((u_1,u_2),(v_1,v_2)) & \leq C_b ||\bfa{u}||_a \ ||\bfa{v}||_a \,,  \label{boundb} \\
b((u_1,u_2),(u_1,u_2)) + K ||\bfa{u}||_{\bfa{L}^2(\Omega)}^2 & \geq \alpha ||\bfa{u}||_a^2  \,. \label{gardingb}
\end{align}
\end{lemma}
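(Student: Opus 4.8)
The plan is to verify the two inequalities separately, using the bilinear form \eqref{bbi} and Assumption \ref{bdd}. For the boundedness estimate \eqref{boundb}, I would bound each of the six terms of $b((u_1,u_2),(v_1,v_2))$ using Cauchy--Schwarz. The two diffusion terms $\int_\Omega \kappa_i \nabla u_i \cdot \nabla v_i\,\dx$ are controlled directly by $\|\kappa_i^{1/2}\nabla u_i\|_{L^2(\Omega)}\|\kappa_i^{1/2}\nabla v_i\|_{L^2(\Omega)}$, which is at most $\|\bfa{u}\|_a\|\bfa{v}\|_a$. For the convection terms $\int_\Omega \bfa{b}_i\cdot\nabla(u_i-u_j)v_i\,\dx$, I would use $|\bfa{b}_i|\le\bar b$, then Cauchy--Schwarz and the lower bound $\kappa_i\ge\underline\kappa$ to write $\|\nabla u_i\|_{L^2(\Omega)}\le\underline\kappa^{-1/2}\|\kappa_i^{1/2}\nabla u_i\|_{L^2(\Omega)}$, together with the Poincar\'e inequality $\|v_i\|_{L^2(\Omega)}\le C_P\|\nabla v_i\|_{L^2(\Omega)}\le C_P\underline\kappa^{-1/2}\|\kappa_i^{1/2}\nabla v_i\|_{L^2(\Omega)}$; this bounds these terms by a constant times $\|\bfa{u}\|_a\|\bfa{v}\|_a$. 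The reaction terms $\int_\Omega Q_i(u_i-u_j)v_i\,\dx$ are handled identically using $|Q_i|\le\bar Q$ and Poincar\'e on \emph{both} factors. Summing and collecting constants gives $C_b$ depending on $\bar\kappa,\underline\kappa,\bar b,\bar Q,C_P$.

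For the G\aa rding inequality \eqref{gardingb}, I would compute $b((u_1,u_2),(u_1,u_2))$ and split it as
\[
b((u_1,u_2),(u_1,u_2)) = \|u\|_a^2 + \underbrace{\int_\Omega \bfa{b}_1\cdot\nabla(u_1-u_2)\,u_1\,\dx + \int_\Omega \bfa{b}_2\cdot\nabla(u_2-u_1)\,u_2\,\dx}_{=:I} + \underbrace{\int_\Omega Q_1(u_1-u_2)u_1\,\dx + \int_\Omega Q_2(u_2-u_1)u_2\,\dx}_{=:II}\,,
\]
and then absorb $I$ and $II$ into $\|u\|_a^2$ up to an $L^2$ remainder. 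The reaction term $II$ is the easy one: by $|Q_i|\le\bar Q$ it is bounded below by $-C\|\bfa{u}\|_{\bfa{L}^2(\Omega)}^2$, which is exactly the form of the negative $L^2$ perturbation allowed in \eqref{gardingb}. The convection term $I$ is the delicate one, and I expect this to be the main obstacle: a crude Cauchy--Schwarz bound gives $|I|\le \bar b\,\underline\kappa^{-1/2}\big(\|\kappa_1^{1/2}\nabla u_1\|+\|\kappa_2^{1/2}\nabla u_2\|\big)\cdot(\text{something involving }\|u_i\|_{L^2})$, but to close the argument I need the gradient part of this to be absorbed into $\|u\|_a^2$ with coefficient strictly less than $1$. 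This is where the hypothesis $\bar b/\sqrt{\underline\kappa}<1$ in Assumption \ref{bdd} is used: applying Young's inequality $|ab|\le\tfrac{\delta}{2}a^2+\tfrac{1}{2\delta}b^2$ with a carefully chosen $\delta$, together with $\|\nabla u_i\|_{L^2}\le\underline\kappa^{-1/2}\|\kappa_i^{1/2}\nabla u_i\|_{L^2}$, lets me bound $|I|$ by $\theta\|u\|_a^2 + C_\theta\|\bfa{u}\|_{\bfa{L}^2(\Omega)}^2$ for some $\theta<1$ when $\bar b/\sqrt{\underline\kappa}$ is small enough. One subtlety is the cross terms $\bfa{b}_1\cdot\nabla u_2\,u_1$ and $\bfa{b}_2\cdot\nabla u_1\,u_2$: I would handle these with the same Young's inequality, possibly after symmetrizing via $\bfa{b}_s,\bfa{b}_a$ from \eqref{syasy}, and the assumption $|\bfa{b}_s|\gg|\bfa{b}_a|$ ensures the antisymmetric contribution is negligible; alternatively one may simply bound all four convection contributions uniformly by $\bar b$ and check that the resulting $\theta$ is still $<1$.

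Combining the two estimates, I get $b((u_1,u_2),(u_1,u_2)) \ge (1-\theta)\|u\|_a^2 - C_\theta\|\bfa{u}\|_{\bfa{L}^2(\Omega)}^2$, so taking $\alpha = 1-\theta>0$ and $K = C_\theta$ yields \eqref{gardingb}. The only genuinely nontrivial point is tracking that the accumulated coefficient $\theta$ in front of $\|u\|_a^2$ stays below $1$, which relies precisely on the smallness condition $1>\bar b/\sqrt{\underline\kappa}$; everything else is Cauchy--Schwarz, Young, and Poincar\'e. I would also remark that once Lemma \ref{bddgarding} is in place, existence and uniqueness for \eqref{eq:var} follows from Lax--Milgram after shifting by $K$, or from the Fredholm alternative, and for the parabolic problem \eqref{eq:vartt} from the standard Galerkin/energy method for parabolic equations satisfying a G\aa rding inequality.
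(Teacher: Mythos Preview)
Your proposal is correct and follows essentially the same route as the paper: Cauchy--Schwarz plus Poincar\'e for \eqref{boundb}, and for \eqref{gardingb} a splitting into $\|\bfa{u}\|_a^2 + I + II$ with Young's inequality on the convection term $I$ so that the gradient contribution carries coefficient $\bar b/\sqrt{\underline\kappa}<1$. The paper's argument is marginally simpler than what you sketch: it uses the unweighted Young inequality $ab\le\tfrac12(a^2+b^2)$ (no parameter $\delta$ needed, since the resulting coefficient is exactly $\bar b/\sqrt{\underline\kappa}$), and it bounds all four convection contributions uniformly by $\bar b$ without invoking the $\bfa{b}_s,\bfa{b}_a$ decomposition or the hypothesis $|\bfa{b}_s|\gg|\bfa{b}_a|$---that assumption plays no role in this lemma.
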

\begin{proof}
First, we prove (\ref{boundb}).  Note that
\beq
\begin{split}
b((u_1, u_2 ),(v_1, v_2)) & \leq \sum\limits_{i=1}^2 \left \|\kappa^{\frac{1}{2}}_i \nabla u_i \right \|_{L^2(\Omega)} \ \left \| \kappa^{\frac{1}{2}}_i \nabla v_i \right \|_{L^2(\Omega)}\\
& \; +\frac{\bar{b}}{\sqrt{\underline{\kappa}}} \displaystyle \sum_{i=1}^2 \sum_{j=1}^2 \left \|\kappa^{\frac{1}{2}}_i \nabla u_i \right \|_{L^2(\Omega)} 
|| v_j ||_{L^2(\Omega)}
 +\bar{Q} \displaystyle \sum_{i=1}^2 \sum_{j=1}^2 || u_i ||_{L^2(\Omega)} || v_j ||_{L^2(\Omega)}\,.
\end{split}
\eeq
By the Poincar\'{e} inequality, there exits a positive constant $C_p(\Omega)$ such that
\begin{equation}\label{poincare}
  ||v_i||_{L^2(\Omega)} \leq C_p ||\nabla v_i||_{L^2(\Omega)} \leq \dfrac{C_p}{\sqrt{\underline{\kappa}}} \left \| \kappa_i^{\frac{1}{2}} \nabla v_i \right\|_{L^2(\Omega)}\,,
  \end{equation}
  for all $v_i \in H^1_0(\Omega), \ i=1,2\,.$ Thus, we obtain 
\beq\label{bb}
\begin{split}
 b((u_1, u_2 ),(v_1, v_2)) 
& \leq \sum\limits_{i=1}^2 \left \|\kappa^{\frac{1}{2}}_i \nabla u_i \right\|_{L^2(\Omega)} \ \left \| \kappa^{\frac{1}{2}}_i \nabla v_i \right\|_{L^2(\Omega)}\\
 & \; +\frac{\bar{b}C_p}{\underline{\kappa}} \displaystyle \sum_{i=1}^2 \sum_{j=1}^2 \left \| \kappa^{\frac{1}{2}}_i \nabla u_i \right\|_{L^2(\Omega)} \ 
\left \|\kappa^{\frac{1}{2}}_j \nabla v_j \right\|_{L^2(\Omega)}\\
& \; + \frac{ \bar{Q}{C_p}^2}{\underline{\kappa}}  \displaystyle \sum_{i=1}^2 \sum_{j=1}^2 \left \| \kappa^{\frac{1}{2}}_i \nabla u_i \right\|_{L^2(\Omega)} \ \left \| \kappa^{\frac{1}{2}}_j \nabla v_j \right\|_{L^2(\Omega)}\\
&\leq \bigg( \left(1+\frac{2\bar{b}C_p}{\underline{\kappa}}+\frac{2 \bar{Q}{C_p}^2}{\underline{\kappa}}\right)\displaystyle \sum_{i=1}^2 
 \left \|\kappa^{\frac{1}{2}}_i \nabla u_i \right\|_{L^2(\Omega)}^2 \bigg)^{\frac{1}{2}}\\
 & \; \cdot \bigg( \left(1+\frac{2\bar{b}C_p}{\underline{\kappa}}+\frac{2 \bar{Q}{C_p}^2} {\underline{\kappa}}\right)\displaystyle \sum_{i=1}^2 
 \left \| \kappa^{\frac{1}{2}}_i \nabla v_i \right\|_{L^2(\Omega)}^2 \bigg)^{\frac{1}{2}}\,.
\end{split}
\eeq
From (\ref{bb}), we obtain the boundedness of $b(\cdot,\cdot)$ as in (\ref{boundb}).  

To prove (\ref{gardingb}), we first note that
\beq
\begin{split}
b((u_1, u_2 ),(u_1, u_2)) 
& = \int_{\Omega} \kappa_1 \nabla u_1\cdot \nabla u_1 \, \dx + \int_{\Omega} \kappa_2 \nabla u_2 \cdot \nabla u_2 \, \dx \\
& \; +\int_{\Omega}  \bfa{b}_1 \cdot \nabla (u_1-u_2) u_1 \, \dx
+\int_{\Omega}  \bfa{b}_2 \cdot \nabla (u_2-u_1) u_2 \, \dx \\
& \; +\int_{\Omega}  Q_1  (u_1-u_2) u_1 \, \dx +\int_{\Omega}  Q_2  (u_2-u_1) u_2 \, \dx\\
&  \geq \displaystyle \sum_{i=1}^2 \left \|\kappa^{\frac{1}{2}}_i \nabla u_i \right\|_{L^2(\Omega)}^2
-\frac{\bar{b}}{\sqrt{\underline{\kappa}}} \displaystyle \sum_{i=1}^2 \sum_{j=1}^2 \left\|\kappa^{\frac{1}{2}}_i \nabla u_i \right \|_{L^2(\Omega)} 
|| u_j ||_{L^2(\Omega)}\\
& \; -\bar{Q} \displaystyle \sum_{i=1}^2 \sum_{j=1}^2 || u_i ||_{L^2(\Omega)} || u_j ||_{L^2(\Omega)}\\
& \geq \displaystyle \sum_{i=1}^2 \left \| \kappa^{\frac{1}{2}}_i \nabla u_i \right \|_{L^2(\Omega)}^2
-\frac{\bar{b}}{2\sqrt{\underline{\kappa}}} \displaystyle \sum_{i=1}^2 \sum_{j=1}^2 \left (\left \|\kappa^{\frac{1}{2}}_i \nabla u_i \right\|_{L^2(\Omega)}^2 + 
|| u_j ||_{L^2(\Omega)}^2 \right)\\
& \; -\frac{\bar{Q}}{2} \displaystyle \sum_{i=1}^2 \sum_{j=1}^2 \left( || u_i ||_{L^2(\Omega)}^2 + || u_j ||_{L^2(\Omega)}^2\right)\\
& =\displaystyle \sum_{i=1}^2 \left \| \kappa^{\frac{1}{2}}_i \nabla u_i \right\|_{L^2(\Omega)}^2
-\frac{\bar{b}}{\sqrt{\underline{\kappa}}} \displaystyle \sum_{i=1}^2 \left(\left \| \kappa^{\frac{1}{2}}_i \nabla u_i \right \|_{L^2(\Omega)}^2 + 
|| u_i ||_{L^2(\Omega)}^2\right)\\
& \; - 2\bar{Q} \displaystyle \sum_{i=1}^2  || u_i ||_{L^2(\Omega)}^2 \\
& = \left(1-\frac{\bar{b}}{\sqrt{\underline{\kappa}}}\right) \displaystyle \sum_{i=1}^2 \left \|\kappa^{\frac{1}{2}}_i \nabla u_i \right \|_{L^2(\Omega)}^2
-\left(\frac{\bar{b}}{\sqrt{\underline{\kappa}}} +2\bar{Q} \right) \displaystyle \sum_{i=1}^2  
|| u_i ||_{L^2(\Omega)}^2 \,.
\end{split}
\eeq
Thus, we deduce that
\beq\label{K}
\begin{split}
b((u_1, u_2 ),(u_1, u_2)) + K \displaystyle \sum_{i=1}^2  
|| u_i ||_{L^2(\Omega)}^2 \geq \alpha \displaystyle \sum_{i=1}^2 \left \| \kappa^{\frac{1}{2}}_i \nabla u_i \right \|_{L^2(\Omega)}^2\,,
\end{split}
\eeq
where $K = \dfrac{\bar{b}}{\sqrt{\underline{\kappa}}} +2\bar{Q} $ and
$  1-\dfrac{\bar{b}}{\sqrt{\underline{\kappa}}} \geq \alpha >0$ by Assumption \ref{bdd}.  Hence, (\ref{gardingb}) holds.
\end{proof}


The following assumption is made for later use. 

\begin{assumption}
\label{assm_alpha}
We assume that $\alpha > \dfrac{K \, C_p}{\sqrt{\underline{\kappa}}}\,,$ where $C_p$, $K$ and $\alpha$ are from the proof of Lemma \ref{bddgarding}.
\end{assumption}
We now present the main results of this section under Assumptions \ref{bdd} and \ref{assm_alpha}.

\begin{lemma}\label{coercivel}
Under Assumption \ref{bdd} and \ref{assm_alpha}, we have
\beq
\bsp
b((u_1, u_2 ),(u_1, u_2)) 
\geq C_c ||\bfa{u}||_a^2\,,
\end{split}
\eeq
for some constant $C_c >0$.
\end{lemma}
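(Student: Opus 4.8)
The plan is to upgrade the G\aa rding-type inequality (\ref{gardingb}) of Lemma~\ref{bddgarding} to genuine coercivity of $b(\cdot,\cdot)$ on $\bfa V$ in the norm $\|\cdot\|_a$, by absorbing the zeroth-order penalty term $K\|\bfa u\|_{\bfa L^2(\Omega)}^2$ into $\alpha\|\bfa u\|_a^2$. The mechanism is the Poincar\'e inequality (\ref{poincare}) combined with the high-contrast lower bound $\kappa_i\ge\underline\kappa$ from Assumption~\ref{bdd}: since $\|\cdot\|_a$ already controls a fixed multiple of $\sum_i\|\nabla u_i\|_{L^2(\Omega)}^2$, it also controls $\|\bfa u\|_{\bfa L^2(\Omega)}^2$, and Assumption~\ref{assm_alpha} is exactly the quantitative condition guaranteeing that a strictly positive constant $C_c$ survives the absorption.

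Concretely I would proceed in three short steps. First, recall from Lemma~\ref{bddgarding} that $b(\bfa u,\bfa u)+K\|\bfa u\|_{\bfa L^2(\Omega)}^2\ge\alpha\|\bfa u\|_a^2$, with $K=\bar b/\sqrt{\underline\kappa}+2\bar Q$ and $0<\alpha\le 1-\bar b/\sqrt{\underline\kappa}$. Second, apply (\ref{poincare}) to each component $u_i\in H^1_0(\Omega)$ and sum over $i=1,2$ to obtain
\[
\|\bfa u\|_{\bfa L^2(\Omega)}^2=\sum_{i=1}^2\|u_i\|_{L^2(\Omega)}^2\le\frac{C_p^2}{\underline\kappa}\sum_{i=1}^2\bigl\|\kappa_i^{1/2}\nabla u_i\bigr\|_{L^2(\Omega)}^2=\frac{C_p^2}{\underline\kappa}\,\|\bfa u\|_a^2 .
\]
Third, substitute this bound back into the G\aa rding inequality,
\[
b(\bfa u,\bfa u)\ge\alpha\|\bfa u\|_a^2-K\|\bfa u\|_{\bfa L^2(\Omega)}^2\ge\Bigl(\alpha-\tfrac{K C_p^2}{\underline\kappa}\Bigr)\|\bfa u\|_a^2=:C_c\|\bfa u\|_a^2 ,
\]
and conclude $C_c>0$ from Assumption~\ref{assm_alpha}.

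I do not expect any genuine obstacle here: the statement is essentially a bookkeeping corollary of Lemma~\ref{bddgarding}. The only point requiring a little care is the order of operations in the second step — first apply Poincar\'e and then the lower bound $\kappa_i\ge\underline\kappa$ — so that the constant multiplying $\|\bfa u\|_a^2$ is precisely the quantity controlled by Assumption~\ref{assm_alpha} and, in particular, does not pick up a spurious dependence on the contrast ratio $\bar\kappa/\underline\kappa$ (which we do not want entering the coercivity constant). Alternatively, one could bypass Lemma~\ref{bddgarding} and re-derive the bound by repeating the chain of estimates in its proof, this time invoking (\ref{poincare}) on the factors $\|u_j\|_{L^2(\Omega)}$ before applying Young's inequality; that route produces a coercivity constant of the same flavor, again strictly positive under a smallness condition on the lower-order coefficients of the type imposed in Assumption~\ref{assm_alpha}.
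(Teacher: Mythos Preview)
Your approach is exactly the paper's: start from the G\aa rding inequality~(\ref{K}), bound $\|\bfa u\|_{\bfa L^2(\Omega)}^2$ via the Poincar\'e inequality~(\ref{poincare}), and absorb to obtain $C_c=\alpha-(\text{Poincar\'e factor})\cdot K>0$ by Assumption~\ref{assm_alpha}. One discrepancy worth noting: your absorption constant is $KC_p^2/\underline\kappa$ (which is what squaring~(\ref{poincare}) actually gives), whereas the paper writes $KC_p/\sqrt{\underline\kappa}$, and Assumption~\ref{assm_alpha} is stated with the latter quantity---so as written, your $C_c>0$ does not literally follow from Assumption~\ref{assm_alpha} unless the two constants happen to coincide; you should either match the paper's constant or note that Assumption~\ref{assm_alpha} must be read with $C_p^2/\underline\kappa$ in place of $C_p/\sqrt{\underline\kappa}$.
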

\begin{proof}
From (\ref{K}) in the proof of Lemma \ref{bddgarding} and the Poincar\'{e}
inequality (\ref{poincare}), we obtain
\beq\label{KnotK0}
\begin{split}
b((u_1, u_2 ),(u_1, u_2)) 
+  \dfrac{K \, C_p}{\sqrt{\underline{\kappa}}} \displaystyle \sum_{i=1}^2  
\left \| \kappa_i^{\frac{1}{2}} \nabla u_i \right \|_{L^2(\Omega)}^2 
\geq \alpha \displaystyle \sum_{i=1}^2 \left \| \kappa^{\frac{1}{2}}_i \nabla u_i \right\|_{L^2(\Omega)}^2\,.
\end{split}
\eeq
Then, it follows that 
\beq\label{coerciveb}
\begin{split}
b((u_1, u_2 ),(u_1, u_2)) 
\geq C_c ||\bfa{u}||_a^2\,,
\end{split}
\eeq
where $C_c = \alpha-\dfrac{K\, C_p}{\sqrt{\underline{\kappa}}} > 0$ by Assumption \ref{assm_alpha}.
\end{proof}

\begin{theorem}
\label{unique_alpha}
Under Assumption \ref{bdd} and \ref{assm_alpha}, we have a unique solution of the problem (\ref{eq:var}) with respect to $||\cdot||_a$.
\end{theorem}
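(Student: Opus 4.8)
The plan is to apply the Lax--Milgram theorem to the bilinear form $b(\cdot,\cdot)$ on the Hilbert space $(\bfa{V}, \|\cdot\|_a)$. For this I first need to confirm that $\|\cdot\|_a$ is genuinely a norm on $\bfa{V}$ equivalent to the usual $H_0^1(\Omega)\times H_0^1(\Omega)$ norm: this is immediate from the uniform bounds $\bar\kappa \geq \kappa_i \geq \underline\kappa$ in Assumption \ref{bdd} together with the Poincar\'e inequality (\ref{poincare}), so $(\bfa{V},\|\cdot\|_a)$ is a Hilbert space. The right-hand side $\bfa\phi \mapsto \int_\Omega f_1\phi_1\,\dx + \int_\Omega f_2\phi_2\,\dx$ is a bounded linear functional on $(\bfa{V},\|\cdot\|_a)$ again by Cauchy--Schwarz and Poincar\'e, since $\bfa{f}\in\bfa{L}^2(\Omega)$.

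The two central hypotheses of Lax--Milgram are already in hand. Boundedness of $b$ with constant $C_b$ is exactly (\ref{boundb}) of Lemma \ref{bddgarding}. Coercivity of $b$ with constant $C_c>0$ is exactly (\ref{coerciveb}) of Lemma \ref{coercivel}, which is where Assumption \ref{assm_alpha} is used: it guarantees $C_c = \alpha - K C_p/\sqrt{\underline\kappa} > 0$, absorbing the lower-order convection and reaction contributions into the principal (diffusive) part. So I would simply invoke the (possibly non-symmetric) Lax--Milgram theorem to conclude that there exists a unique $\bfa{u}=(u_1,u_2)\in\bfa{V}$ with $b(\bfa{u},\bfa\phi) = \int_\Omega f_1\phi_1\,\dx + \int_\Omega f_2\phi_2\,\dx$ for all $\bfa\phi\in\bfa{V}$; unpacking the definition (\ref{bbi}) of $b$ shows this is precisely the variational system (\ref{eq:var}), and uniqueness is with respect to $\|\cdot\|_a$ as claimed.

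There is essentially no serious obstacle remaining, since the hard analytic work — establishing the G\aa rding-type inequality (\ref{gardingb}), imposing Assumption \ref{assm_alpha}, and upgrading to genuine coercivity in Lemma \ref{coercivel} — has already been carried out. The only points requiring a little care are: (i) verifying that the non-symmetry of $b$ (coming from the convection terms $\bfa{b}_i\cdot\nabla(u_i-u_j)$ and the asymmetric coupling) is harmless, which is why one uses the general Lax--Milgram theorem rather than Riesz representation; and (ii) checking the continuity of the load functional explicitly, $|\int_\Omega f_i\phi_i\,\dx| \leq \|f_i\|_{L^2(\Omega)}\|\phi_i\|_{L^2(\Omega)} \leq (C_p/\sqrt{\underline\kappa})\|f_i\|_{L^2(\Omega)}\,\|\kappa_i^{1/2}\nabla\phi_i\|_{L^2(\Omega)}$, so that its dual norm is controlled by $(C_p/\sqrt{\underline\kappa})\|\bfa{f}\|_{\bfa{L}^2(\Omega)}$. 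Assembling these, the proof is short: state the Hilbert space, quote boundedness from Lemma \ref{bddgarding} and coercivity from Lemma \ref{coercivel}, verify the functional is bounded, and apply Lax--Milgram.
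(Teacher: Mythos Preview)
Your proposal is correct and follows essentially the same approach as the paper: the paper's proof simply states that the result follows directly from Lemmas \ref{bddgarding}, \ref{coercivel} and the Lax--Milgram Theorem. Your additional verification that $(\bfa{V},\|\cdot\|_a)$ is a Hilbert space and that the load functional is bounded fills in routine details the paper leaves implicit, but the core argument is identical.
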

\begin{proof}
The theorem directly results from Lemmas \ref{bddgarding}, \ref{coercivel} and the Lax-Milgram Theorem. 
\end{proof}

Also for later use, note that under Assumption \ref{bdd} and \ref{assm_alpha}, the following assumptions are satisfied. 
\begin{assumption}\label{assumeb}
\label{bdd_coer_a}
There exist constants $C_1$, $C_2$ $>0$ such that
\beq\label{boundcoercive}
\begin{split}
&b((u_1,u_2),(v_1,v_2)) \leq C_1 ||\bfa{u}||_a \ ||\bfa{v}||_a, \\
&b((u_1,u_2),(u_1,u_2))  \geq  C_2 ||\bfa{u}||_a^2\,,
\end{split}
\eeq
for all $\bfa{u} = (u_1,u_2), \, \bfa{v}=(v_1,v_2) \in \bfa{V}$.
\end{assumption}

\begin{theorem}
Under Assumption \ref{bdd}, the problem (\ref{eq:vartt}) has a unique solution. 
\end{theorem}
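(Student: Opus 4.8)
The plan is to recognize (\ref{eq:vartt}) as a standard abstract linear parabolic problem and to solve it by the Faedo--Galerkin method, the crucial observation being that the elliptic part is governed by the \emph{same} bilinear form $b(\cdot,\cdot)$ of (\ref{bbi}) for which Lemma \ref{bddgarding} has already established, using only Assumption \ref{bdd}, the continuity (\ref{boundb}) and the G\aa{}rding inequality (\ref{gardingb}). Contrary to the elliptic case (Theorem \ref{unique_alpha}), which needed the extra smallness Assumption \ref{assm_alpha} to promote G\aa{}rding to coercivity, for the time-dependent system G\aa{}rding alone suffices: the $\bfa{L}^2$-defect it allows is recovered from the time derivative via Gr\"onwall's inequality. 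I would set up the Gelfand triple $\bfa{V}\hookrightarrow \bfa{H}\hookrightarrow \bfa{V}^*$, where $\bfa{V}^* = H^{-1}(\Omega)\times H^{-1}(\Omega)$ and $\bfa{H}$ is $\bfa{L}^2(\Omega)$ equipped with the $\mathcal{C}$-weighted inner product $(\bfa{u},\bfa{v})_{\bfa{H}} := \int_\Omega(\mathcal{C}_{11}u_1v_1 + \mathcal{C}_{22}u_2v_2)\,\dx$; by Assumption \ref{bdd} this is equivalent to $\|\cdot\|_{\bfa{L}^2(\Omega)}$, and since $\mathcal{C}_{11},\mathcal{C}_{22}$ do not depend on $t$, testing the two equations of (\ref{eq:vartt}) against $u_1$ and $u_2$ turns the parabolic terms into $\tfrac12\tfrac{d}{dt}\|\bfa{u}\|_{\bfa{H}}^2$.

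Next I would carry out the Galerkin construction. Pick a basis $\{\bfa{w}_k\}_{k\ge1}$ of $\bfa{V}$ orthogonal in $\bfa{H}$ (for instance assembled componentwise from the Dirichlet eigenfunctions of $-\Delta$ on $\Omega$), look for $\bfa{u}_m(t) = \sum_{k=1}^m d_k^m(t)\bfa{w}_k$ solving the finite-dimensional system obtained by testing (\ref{eq:vartt}) against $\bfa{w}_1,\dots,\bfa{w}_m$ with an $\bfa{H}$-projected initial datum; this is a linear constant-coefficient ODE system, hence uniquely solvable on $[0,T]$. Testing it with $\bfa{u}_m$ and applying (\ref{gardingb}) yields $\tfrac{d}{dt}\|\bfa{u}_m\|_{\bfa{H}}^2 + 2\alpha\|\bfa{u}_m\|_a^2 \le 2K\|\bfa{u}_m\|_{\bfa{L}^2(\Omega)}^2 + C\|\bfa{f}\|_{\bfa{L}^2(\Omega)}^2$; absorbing the first right-hand term by norm equivalence and using Gr\"onwall gives bounds for $\bfa{u}_m$ in $\bfa{L}^\infty(0,T;\bfa{H})\cap \bfa{L}^2(0,T;\bfa{V})$ independent of $m$, and then, via the continuity (\ref{boundb}), a uniform bound for $\partial_t\bfa{u}_m$ in $\bfa{L}^2(0,T;\bfa{V}^*)$.

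From here the argument is routine: extract a subsequence converging weakly in $\bfa{L}^2(0,T;\bfa{V})$, weakly-$*$ in $\bfa{L}^\infty(0,T;\bfa{H})$, with time derivatives converging weakly in $\bfa{L}^2(0,T;\bfa{V}^*)$; linearity of every term in $\bfa{u}$ lets one pass to the limit in the Galerkin identities to obtain $\bfa{u}\in\bfa{L}^2(0,T;\bfa{V})$ with $\partial_t\bfa{u}\in\bfa{L}^2(0,T;\bfa{V}^*)$ solving (\ref{eq:vartt}), and the continuous embedding of this solution class into $C([0,T];\bfa{H})$ — together with the Aubin--Lions lemma — identifies the trace at $t=0$, so that the prescribed initial condition is attained. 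Uniqueness is the easy part: the difference $\bfa{w}$ of two solutions solves the homogeneous problem with zero initial data, and testing with $\bfa{w}$ and using (\ref{gardingb}) gives $\tfrac{d}{dt}\|\bfa{w}\|_{\bfa{H}}^2 \le 2K\|\bfa{w}\|_{\bfa{L}^2(\Omega)}^2 \le C\|\bfa{w}\|_{\bfa{H}}^2$, whence $\bfa{w}\equiv 0$ by Gr\"onwall. I expect the only mildly delicate steps to be bookkeeping — making the weighted Gelfand triple precise and justifying the $t=0$ trace — rather than any new estimate; the non-symmetric convection $\bfa{b}_i\cdot\nabla(u_1-u_2)$ and the sign-indefinite reaction $Q_i$ create no difficulty here, as they are already subsumed in the G\aa{}rding bound (\ref{gardingb}).
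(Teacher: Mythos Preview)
Your proposal is correct and is essentially what the paper has in mind: the paper's own proof is simply a reference to \cite{wloka,rh2} together with Lemma~\ref{bddgarding}, i.e., it invokes the standard abstract parabolic theory (Faedo--Galerkin plus G\aa{}rding plus Gr\"onwall) once the boundedness~(\ref{boundb}) and the G\aa{}rding inequality~(\ref{gardingb}) are in hand. You have written out precisely that argument, and you correctly observed that Assumption~\ref{assm_alpha} is not needed in the time-dependent case.
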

\begin{proof}
We refer to \cite{wloka,rh2} and Lemma \ref{bddgarding} for the proof. 
\end{proof}
\subsection{Fine-scale finite element discretization}\label{fsfed}
We provide finite element approximation of the solutions to (\ref{eq:var}) and (\ref{eq:vartt}).
Let $\bfa{V}_h = V^1_h\times V^2_h = V_h\times V_h\, (\subset \bfa{V})$, a Cartesian product space, be the first-order Galerkin finite element basis space, with respect to the fine grid $\mathcal{T}_h$.  
That is, in our paper, $V^i_h = V_h$ is a conforming  finite element space of each continuum $i \, (\tu{for } i=1,2)$ on $\mathcal{T}_h$.

We first consider the proposed static case (\ref{eq:main1}), that is, solving the following problem for $\bfa{u}_h = (u_{h,1}, u_{h,2}) \,  (\in \bfa{V}_h)$:
\beq
\label{eq:var2}
\begin{split}
& \int_{\Omega} \kappa_1 (\bfa{x}) \nabla u_{h,1}(\bfa{x}) \cdot \nabla \phi_1(\bfa{x}) \, \dx
+ \int \bfa{b}_1(\bfa{x})  \cdot \nabla(u_{h,1}(\bfa{x})-u_{h,2}(\bfa{x})) \phi_1(\bfa{x}) \, \dx\\
 & \; +\int_{\Omega} Q_1(\bfa{x}) (u_{h,1}(\bfa{x})-u_{h,2}(\bfa{x}))\phi_1(\bfa{x})  \, \dx 
 = \int_{\Omega} f_1(\bfa{x}) \phi_1(\bfa{x}) \, \dx\,,\\
& \int_{\Omega} \kappa_2 (\bfa{x}) \nabla u_{h,2}(\bfa{x})\cdot \nabla \phi_2(\bfa{x}) \, \dx
 + \int_{\Omega} \bfa{b}_2(\bfa{x})  \cdot \nabla(u_{h,2}(\bfa{x})-u_{h,1}(\bfa{x}))\phi_2(\bfa{x})  \, \dx \\
 & \; + \int_{\Omega} Q_2(\bfa{x}) (u_{h,2}(\bfa{x})-u_{h,1}(\bfa{x})) \phi_2(\bfa{x})  \, \dx 
 =\int_{\Omega} f_2(\bfa{x}) \phi_2(\bfa{x}) \, \dx\,,
\end{split}
\eeq
for all $(\phi_1, \phi_2) \in \bfa{V}_h$.
\begin{lemma} 
\label{lemma:1}
Assuming $\bfa{u}  \in \bfa{H}^2(\Omega)$, we have 
\[\displaystyle\inf_{\bfa{v} \in \bfa{V}_h} ||\bfa{u} - \bfa{v}||_a \leq C_A (\bar{\kappa}) h \, \|\bfa{u}\|_{\bfa{H}^2(\Omega)}\,,\] where $\bar{\kappa} \geq \kappa_i$ (as in Assumption \ref{bdd} for $i=1,2$).
\end{lemma}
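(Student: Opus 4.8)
The plan is to bound the infimum by the interpolation error of the standard first-order nodal (Lagrange) interpolant and then absorb the permeability weights. Let $I_h$ denote the usual continuous piecewise-linear interpolation operator associated with the fine grid $\mathcal{T}_h$, and set $I_h\bfa{u} := (I_h u_1, I_h u_2)$. Since $\bfa{u} = (u_1,u_2) \in \bfa{H}^2(\Omega) \cap \bfa{V}$, each $u_i$ is continuous on $\overline{\Omega}$ (Sobolev embedding in two dimensions) and vanishes on $\partial\Omega$; because $\Omega$ is polygonal and $\mathcal{T}_h$ is a conforming, shape-regular, boundary-fitted triangulation, $I_h u_i$ again vanishes on $\partial\Omega$, so $I_h\bfa{u} \in \bfa{V}_h$. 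Hence $\inf_{\bfa{v}\in\bfa{V}_h}\|\bfa{u}-\bfa{v}\|_a \le \|\bfa{u}-I_h\bfa{u}\|_a$, and it remains to estimate the right-hand side.

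Next I would estimate $\|\bfa{u}-I_h\bfa{u}\|_a$ componentwise. By the definition (\ref{abi}) of $\|\cdot\|_a$ together with the upper bound $\kappa_i \le \bar{\kappa}$ from Assumption \ref{bdd},
\beq
\|\bfa{u} - I_h\bfa{u}\|_a^2 = \sum_{i=1}^2 \left\|\kappa_i^{1/2}\nabla(u_i - I_h u_i)\right\|_{L^2(\Omega)}^2 \le \bar{\kappa}\sum_{i=1}^2 \left\|\nabla(u_i - I_h u_i)\right\|_{L^2(\Omega)}^2\,.
\eeq
Then I invoke the classical first-order interpolation error estimate on shape-regular meshes: there is a constant $C_I$ depending only on the shape-regularity of $\mathcal{T}_h$ such that $\|\nabla(u_i - I_h u_i)\|_{L^2(\Omega)} \le C_I\,h\,|u_i|_{H^2(\Omega)}$ for $i=1,2$. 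Combining these two facts,
\beq
\|\bfa{u} - I_h\bfa{u}\|_a \le \sqrt{\bar{\kappa}}\,C_I\,h\left(\sum_{i=1}^2 |u_i|_{H^2(\Omega)}^2\right)^{1/2} \le \sqrt{\bar{\kappa}}\,C_I\,h\,\|\bfa{u}\|_{\bfa{H}^2(\Omega)}\,,
\eeq
and setting $C_A(\bar{\kappa}) := \sqrt{\bar{\kappa}}\,C_I$ yields the claimed bound.

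There is essentially no deep obstacle here; the statement is a routine consequence of standard interpolation theory, and the ``hardest'' part is merely bookkeeping. The two points deserving a word of care are: (i) checking that the nodal interpolant actually lands in $\bfa{V}_h$, which is where the continuity of $H^2$ functions in 2D, the polygonal domain, and the boundary-fitted mesh are used; and (ii) making explicit that $C_A$ depends on $\bar{\kappa}$ (through the weight in $\|\cdot\|_a$) and on the shape-regularity constant of the mesh family, but neither on $h$ nor on $\bfa{u}$. If one wished to relax the regularity hypothesis below $\bfa{H}^2(\Omega)$, the Lagrange interpolant could be replaced by the Scott--Zhang quasi-interpolant, which also preserves homogeneous Dirichlet data and satisfies the same first-order estimate; but under the stated assumption $\bfa{u}\in\bfa{H}^2(\Omega)$ the Lagrange interpolant is the simplest route.
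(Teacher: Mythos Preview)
Your proof is correct and is essentially a fleshed-out version of the paper's own argument, which merely states that the result is standard from the definition of $\|\cdot\|_a$ and the Bramble--Hilbert lemma. Your use of the Lagrange interpolant together with the bound $\kappa_i\le\bar\kappa$ is exactly what the paper has in mind.
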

\begin{proof}
The proof is quite standard by the defintion (\ref{abi}) of norm $\| \cdot \|_a$ and the Bramble-Hilbert Lemma.
\end{proof}
Let $\langle \bfa{u},\bfa{v} \rangle_{\bfa{L}^2(\Omega)}:=\displaystyle \int_\Omega u_1 v_1 \, \dx + \displaystyle \int_\Omega u_2 v_2 \, \dx$,
where $\bfa{u} = (u_1, u_2), \ \bfa{v} = (v_1, v_2) \in \bfa{V}$.
We consider the adjoint problem of (\ref{eq:var}) : Find $\bfa{w} \in \bfa{V}$ that satisfies
\begin{equation}\label{adj2}
b(\bfa{v},\bfa{w}) = \langle \bfa{f}, \bfa{v} \rangle_{\bfa{L}^2(\Omega)}, \quad \text{for all } \bfa{v} \in \bfa{V}\,.
\end{equation}

\begin{theorem}
\label{unique1}
Assume that each of the problem (\ref{eq:var}) and its corresponding adjoint problem has a unique solution in $\bfa{V}$.  We further assume that the solution $\bfa{w} = (w_1, w_2) \in \bfa{V}$ of the above adjoint problem (\ref{adj2}) satisfies
\beq\label{CR}
\|\bfa{w}\|_{\bfa{H}^2(\Omega)} \leq C_R ||\bfa{f}||_{\bfa{L}^2(\Omega)} \,,
\eeq
for all $\bfa{f} = (f_1, f_2) \in \bfa{L}^{2}(\Omega)$.  Let $\bfa{u} \in \bfa{V}$ be the solution of (\ref{eq:var}).  Then, there are positive constants $h_0$ and $C$ such that for all $h \leq h_0$, the problem (\ref{eq:var2}) has a unique solution $\bfa{u}_h = (u_{h,1}, u_{h,2}) \in \bfa{V}_h$ that satisfies
\beq\label{Cba}
||\bfa{u} - \bfa{u}_h||_a \leq C\inf\limits_{\bfa{v} \in \bfa{V}_h} ||\bfa{u}-\bfa{v}||_a\,,
\eeq
where we may take $C = 2C_b/\alpha$, with $C_b$ and $\alpha$ from Lemma \ref{bddgarding}. 
\end{theorem}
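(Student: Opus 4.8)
The plan is to run a duality (Aubin--Nitsche / Schatz) argument. The form $b(\cdot,\cdot)$ is bounded (Lemma \ref{bddgarding}, \eqref{boundb}) but only satisfies the G{\aa}rding inequality \eqref{gardingb}, so the Lax--Milgram/C\'ea route used for \eqref{eq:var} does not apply verbatim on $\bfa{V}_h$; I will instead use the $\bfa{H}^2$-regularity \eqref{CR} of the adjoint problem \eqref{adj2} to dominate the lower-order term $K\|\cdot\|_{\bfa{L}^2(\Omega)}^2$ by a factor of $h$ and then absorb it for $h$ small. Also, since existence of $\bfa{u}_h$ is part of the claim, I will first prove uniqueness and then recover existence from the fact that \eqref{eq:var2} is a square linear system on the finite-dimensional space $\bfa{V}_h$.

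The key ingredient is a duality estimate: for any $\bfa{z}\in\bfa{V}$ with $b(\bfa{z},\bfa{v}_h)=0$ for all $\bfa{v}_h\in\bfa{V}_h$, one has $\|\bfa{z}\|_{\bfa{L}^2(\Omega)}\le C'h\,\|\bfa{z}\|_a$ with $C'=C_b\,C_A(\bar{\kappa})\,C_R$. To prove it, let $\bfa{w}\in\bfa{V}$ solve the adjoint problem $b(\bfa{v},\bfa{w})=\langle\bfa{z},\bfa{v}\rangle_{\bfa{L}^2(\Omega)}$ for all $\bfa{v}\in\bfa{V}$ (well posed by hypothesis), so that $\|\bfa{w}\|_{\bfa{H}^2(\Omega)}\le C_R\|\bfa{z}\|_{\bfa{L}^2(\Omega)}$ by \eqref{CR}. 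Then, for any $\bfa{w}_h\in\bfa{V}_h$,
\[
\|\bfa{z}\|_{\bfa{L}^2(\Omega)}^2=b(\bfa{z},\bfa{w})=b(\bfa{z},\bfa{w}-\bfa{w}_h)\le C_b\|\bfa{z}\|_a\|\bfa{w}-\bfa{w}_h\|_a\,,
\]
using $b(\bfa{z},\bfa{w}_h)=0$ and \eqref{boundb}; taking the infimum over $\bfa{w}_h$ and invoking Lemma \ref{lemma:1} for $\bfa{w}\in\bfa{H}^2(\Omega)$ gives $\|\bfa{z}\|_{\bfa{L}^2(\Omega)}^2\le C'h\|\bfa{z}\|_a\|\bfa{z}\|_{\bfa{L}^2(\Omega)}$, whence the claim.

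For uniqueness, take $\bfa{z}_h\in\bfa{V}_h$ solving the homogeneous discrete problem $b(\bfa{z}_h,\bfa{v}_h)=0$ for all $\bfa{v}_h\in\bfa{V}_h$; by \eqref{gardingb} and the duality estimate, $\alpha\|\bfa{z}_h\|_a^2\le b(\bfa{z}_h,\bfa{z}_h)+K\|\bfa{z}_h\|_{\bfa{L}^2(\Omega)}^2=K\|\bfa{z}_h\|_{\bfa{L}^2(\Omega)}^2\le K(C'h)^2\|\bfa{z}_h\|_a^2$, so choosing $h_0$ with $K(C'h_0)^2=\alpha/2$ forces $\bfa{z}_h=0$ for $h\le h_0$; thus the discrete operator is injective, hence bijective, and \eqref{eq:var2} has a unique solution $\bfa{u}_h$. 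For the error bound, set $\bfa{e}=\bfa{u}-\bfa{u}_h$; since $\bfa{V}_h\subset\bfa{V}$, subtracting \eqref{eq:var2} from \eqref{eq:var} gives $b(\bfa{e},\bfa{v}_h)=0$ for all $\bfa{v}_h\in\bfa{V}_h$, so the duality estimate applies to $\bfa{z}=\bfa{e}$ and $\|\bfa{e}\|_{\bfa{L}^2(\Omega)}\le C'h\|\bfa{e}\|_a$. Then, for any $\bfa{v}_h\in\bfa{V}_h$, using \eqref{gardingb}, the orthogonality $b(\bfa{e},\bfa{v}_h-\bfa{u}_h)=0$, and \eqref{boundb},
\[
\alpha\|\bfa{e}\|_a^2\le b(\bfa{e},\bfa{e})+K\|\bfa{e}\|_{\bfa{L}^2(\Omega)}^2=b(\bfa{e},\bfa{u}-\bfa{v}_h)+K\|\bfa{e}\|_{\bfa{L}^2(\Omega)}^2\le C_b\|\bfa{e}\|_a\|\bfa{u}-\bfa{v}_h\|_a+K(C'h)^2\|\bfa{e}\|_a^2\,,
\]
and for $h\le h_0$ the last term is at most $\tfrac{\alpha}{2}\|\bfa{e}\|_a^2$, giving $\|\bfa{e}\|_a\le\tfrac{2C_b}{\alpha}\|\bfa{u}-\bfa{v}_h\|_a$; the infimum over $\bfa{v}_h\in\bfa{V}_h$ then yields \eqref{Cba} with $C=2C_b/\alpha$.

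I expect the main difficulty to be conceptual rather than computational: because $b$ is only G{\aa}rding-coercive, solvability of the discrete system must be obtained indirectly via uniqueness and finite dimensionality, and the Aubin--Nitsche duality must be set up so that the \emph{error} $\bfa{e}$, not merely discrete functions, is $b$-orthogonal to $\bfa{V}_h$; one also has to verify that a single threshold $h_0$ can serve both the injectivity step and the absorption of $K\|\bfa{e}\|_{\bfa{L}^2(\Omega)}^2$. The remaining steps (Cauchy--Schwarz, and invoking Lemmas \ref{bddgarding} and \ref{lemma:1}) are routine.
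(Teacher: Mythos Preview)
Your proposal is correct and follows essentially the same Schatz-type duality argument as the paper: G{\aa}rding inequality plus an Aubin--Nitsche bound $\|\bfa{e}\|_{\bfa{L}^2}\le C_bC_AC_R\,h\,\|\bfa{e}\|_a$ to absorb the lower-order term, yielding the same $h_0$ and the same constant $2C_b/\alpha$. Your treatment of existence (isolating the duality bound for any $b$-orthogonal $\bfa{z}$, then deducing injectivity of the discrete operator and invoking finite dimensionality) is more explicit than the paper's, which simply refers to \cite{brenner2007mathematical} for that step.
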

\begin{proof}
The Theorem is proved based on the procedure in \cite{brenner2007mathematical}. 
From Lemma \ref{bddgarding}, we get
\beq\label{alpha}
\bsp
\alpha ||\bfa{u}-\bfa{u}_h||_a^2 \leq b(\bfa{u}-\bfa{u}_h, \bfa{u}-\bfa{u}_h) + K  ||\bfa{u} -\bfa{u}_h||_{\bfa{L}^2(\Omega)}^2\,,
\end{split}
\eeq
where $K$ and $\alpha$ are as in the proof of Lemma \ref{bddgarding}.  From (\ref{eq:var2}), for any $\bfa{v} \in \bfa{V}_h$, we always have $b(\bfa{u}-\bfa{u}_h, \bfa{v}) = 0$.  Thus,
\beq
\label{eq:main2}
\bsp
& b(\bfa{u}-\bfa{u}_h, \bfa{u}-\bfa{u}_h) + K ||\bfa{u} -\bfa{u}_h||_{\bfa{L}^2(\Omega)}^2 \\
& = b(\bfa{u}-\bfa{u}_h, \bfa{u}-\bfa{v}) + K ||\bfa{u} -\bfa{u}_h||_{\bfa{L}^2(\Omega)}^2\\
&\leq C_b ||\bfa{u}-\bfa{u}_h ||_a \ ||\bfa{u}-\bfa{v} ||_a+ K ||\bfa{u} -\bfa{u}_h||_{\bfa{L}^2(\Omega)}^2\,,
\end{split}
\eeq
where the last inequality follows from (\ref{boundb}).  Let $\bfa{w} \in \bfa{V}$ be the solution to the problem (\ref{adj2}) 
with $\bfa{f} = \bfa{u}-\bfa{u}_h$, that is,
$b(\bfa{v},\bfa{w}) = \langle \bfa{u}-\bfa{u}_h, \bfa{v} \rangle_{\bfa{L}^2(\Omega)}$ for all $\bfa{v} \in \bfa{V}$.
Then, for any $\bfa{w}_h \in \bfa{V}_h$, we obtain 
\beq\label{L2norm}
\bsp
\| \bfa{u}-\bfa{u}_h \|^2_{\bfa{L}^2(\Omega)}  & = \langle \bfa{u}-\bfa{u}_h,\bfa{u}-\bfa{u}_h\rangle_{\bfa{L}^2(\Omega)}  = b(\bfa{u}-\bfa{u}_h, \bfa{w}) = b(\bfa{u}-\bfa{u}_h, \bfa{w}-\bfa{w}_h )\\
& \leq C_b||\bfa{u}-\bfa{u}_h ||_a \ ||\bfa{w}-\bfa{w}_h ||_a\,.
\end{split}
\eeq
By Lemma \ref{lemma:1} for $||\bfa{w}-\bfa{w}_h ||_a$, (\ref{L2norm}) becomes
\beq\label{3_27}
\bsp
\| \bfa{u}-\bfa{u}_h \|^2_{\bfa{L}^2(\Omega)}  &\leq C_b C_A h ||\bfa{u}-\bfa{u}_h ||_a \ \|\bfa{w}\|_{\bfa{H}^2(\Omega)}\\
&\leq C_b C_A C_R h ||\bfa{u}-\bfa{u}_h ||_a \ ||\bfa{u}-\bfa{u}_h ||_{\bfa{L}^2(\Omega)}\,,
\end{split}
\eeq
where the last inequality follows from assumption (\ref{CR}).  Simplifying (\ref{3_27}), we get
\beq
\bsp
\| \bfa{u}-\bfa{u}_h \|_{\bfa{L}^2(\Omega)}  \leq
C_b C_A C_R h ||\bfa{u}-\bfa{u}_h ||_a\,.
\end{split}
\eeq
From this inequality and (\ref{eq:main2}), we derive from (\ref{alpha}) that
\beq
\label{eq:main3}
\bsp
\alpha ||\bfa{u}-\bfa{u}_h||_a^2 
\leq C_b ||\bfa{u}-\bfa{u}_h ||_a\ ||\bfa{u}-\bfa{v} ||_a+  K (C_b C_A C_R h)^2 ||\bfa{u}-\bfa{u}_h ||_a^2\,.
\end{split}
\eeq
For $h \leq h_0$, where $h_0 = \dfrac{\sqrt{\alpha}}{ \sqrt{2K} C_b C_A C_R}$, we obtain 
\beq
||\bfa{u}-\bfa{u}_h||_a \leq  \dfrac{2C_b}{\alpha} ||\bfa{u}-\bfa{v} ||_a\,,
\eeq
for all $\bfa{v} \in \bfa{V}_h$, and the desired result (\ref{Cba}) follows.  The proof of uniqueness of the solution to (\ref{eq:var2}) is quite straightforward (\cite{brenner2007mathematical}).
\end{proof}
We now investigate the dynamic case, that is, the variational problem (\ref{eq:vartt}) of (\ref{eq:main1*}) for $\bfa{u}_h = (u_{h,1}, u_{h,2}) \in \bfa{V}_h$: 
\beq
\label{eq:var4}
\begin{split}
& \int_{\Omega}{\mathcal C}_{11} \frac{\partial u_{h,1}}{\partial t}\phi_1 \, \dx
+\int_{\Omega} \kappa_1(\bfa{x})  \nabla u_{h,1} \cdot \nabla \phi_1 \, \dx
+ \int_\Omega \bfa{b}_1(\bfa{x})  \cdot \nabla(u_{h,1}-u_{h,2}) \phi_1\, \dx\\
 & \; +\int_{\Omega} Q_1(\bfa{x}) (u_{h,1}-u_{h,2})\phi_1  \, \dx 
 = \int_{\Omega} f_1 \phi_1 \, \dx\,,\\
 & \int_{\Omega}{\mathcal C}_{22} \frac{\partial u_{h,2}}{\partial t}\phi_2 \, \dx
+\int_{\Omega} \kappa_2(\bfa{x}) \nabla u_{h,2}\cdot \nabla \phi_2 \, \dx
 + \int_{\Omega} \bfa{b}_2(\bfa{x})  \cdot \nabla(u_{h,2}-u_{h,1})\phi_2  \, \dx\\
 & \; + \int_{\Omega} Q_2(\bfa{x}) (u_{h,2}-u_{h,1}) \phi_2  \, \dx 
 =\int_{\Omega} f_2 \phi_2 \, \dx\,,
\end{split}
\eeq
for all $(\phi_1, \phi_2) \in \bfa{V}_h$ and a.e.\ $t \in (0,T)$.
We define the following bilinear forms in $\bfa{V}\times \bfa{V}$:
\beq\label{cabi}
\bsp
&c((u_1,u_2),(v_1,v_2))
 = \int_{\Omega}{\mathcal C}_{11} u_1 v_1 \, \dx
 + \int_{\Omega}{\mathcal C}_{22} u_2 v_2 \, \dx\,,\\
&a((u_1,u_2),(v_1,v_2)) = \int_{\Omega} \kappa_1  \nabla u_{1} \cdot \nabla v_1 \, \dx
+\int_{\Omega} \kappa_2 \nabla u_{2}\cdot \nabla v_2 \, \dx\,.\\
\end{split}
\eeq
Let us hence define the norms $||\bfa{u}||_c^2 = c(\bfa{u},\bfa{u}) = \langle \bfa{u}, \bfa{u} \rangle_c$ and $||\bfa{u}||_a^2 = a(\bfa{u},\bfa{u}) = \langle \bfa{u}, \bfa{u}\rangle_a\,.$
\begin{lemma}
\label{ceafem1}
Under Assumption \ref{bdd_coer_a},
we have 
\beq\label{lemmain1}
\bsp
& ||\bfa{u} (\cdot,T)-\bfa{u}_h(\cdot,T)||_c^2 
+ \int_0^T ||\bfa{u}-\bfa{u}_h||_{a}^2 \, \dt\\
& \; \leq
 C  \inf\limits_{\bfa{w} \in \bfa{V}_h} \bigg(\int_0^T \norm{\frac{\partial (\bfa{w} -\bfa{u})}{\partial t}}_c^2 \, \dt 
 + \int_0^T \norm{\bfa{w}-\bfa{u}}_{a}^2 \, \dt
+ ||\bfa{w} (\cdot,0)-\bfa{u}(\cdot,0)||_c^2\bigg)\,,
\end{split}
\eeq
where $\bfa{u}$ and $\bfa{u}_{h}$ satisfy (\ref{eq:vartt}) and (\ref{eq:var4}), respectively.
\end{lemma}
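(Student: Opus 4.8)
The plan is to run the classical Céa-type energy argument for parabolic Galerkin approximations, adapted to the coupled dual-continuum system. Subtract \eqref{eq:var4} from \eqref{eq:vartt} to obtain the Galerkin orthogonality relation: for all $\bfa{\phi} \in \bfa{V}_h$ and a.e.\ $t$,
\beq
c\!\left(\partial_t(\bfa{u}-\bfa{u}_h),\bfa{\phi}\right) + b(\bfa{u}-\bfa{u}_h,\bfa{\phi}) = 0\,,
\eeq
where $b(\cdot,\cdot)$ is the full bilinear form from \eqref{bbi} (note $b = a + \text{convection} + \text{reaction}$). Then, following Brenner--Scott, I would split the error as $\bfa{u}-\bfa{u}_h = (\bfa{u}-\bfa{w}) + (\bfa{w}-\bfa{u}_h) =: \bfa{\eta} + \bfa{\theta}$ for an arbitrary $\bfa{w} \in \bfa{V}_h$ (eventually an interpolant), where $\bfa{\theta} \in \bfa{V}_h$ is the discrete component we control by testing.

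The key steps, in order: first, set $\bfa{\phi} = \bfa{\theta}$ in the orthogonality relation, which gives $c(\partial_t \bfa{\theta},\bfa{\theta}) + b(\bfa{\theta},\bfa{\theta}) = -c(\partial_t \bfa{\eta},\bfa{\theta}) - b(\bfa{\eta},\bfa{\theta})$. Second, use $c(\partial_t\bfa{\theta},\bfa{\theta}) = \tfrac{1}{2}\tfrac{d}{dt}\|\bfa{\theta}\|_c^2$, the coercivity $b(\bfa{\theta},\bfa{\theta}) \geq C_2\|\bfa{\theta}\|_a^2$ from Assumption~\ref{bdd_coer_a}, and boundedness $b(\bfa{\eta},\bfa{\theta}) \leq C_1\|\bfa{\eta}\|_a\|\bfa{\theta}\|_a$ together with Cauchy--Schwarz on the $c$-term, $c(\partial_t\bfa{\eta},\bfa{\theta}) \leq \|\partial_t\bfa{\eta}\|_c\|\bfa{\theta}\|_c$. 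Third, absorb the $\|\bfa{\theta}\|_a$ and $\|\bfa{\theta}\|_c$ factors on the right via Young's inequality (using the Poincaré-type control $\|\bfa{\theta}\|_c \lesssim \|\bfa{\theta}\|_a$ to handle the $\|\bfa{\theta}\|_c$ term without a Grönwall factor, or alternatively keep it and close with Grönwall), leaving
\beq
\tfrac{d}{dt}\|\bfa{\theta}\|_c^2 + C_2\|\bfa{\theta}\|_a^2 \lesssim \|\partial_t\bfa{\eta}\|_c^2 + \|\bfa{\eta}\|_a^2\,.
\eeq
Fourth, integrate from $0$ to $T$, bound $\|\bfa{\theta}(\cdot,0)\|_c \leq \|\bfa{\eta}(\cdot,0)\|_c + \|(\bfa{u}-\bfa{u}_h)(\cdot,0)\|_c$ (choosing the discrete initial data so the latter is controlled, or simply folding it into the $\|\bfa{w}(\cdot,0)-\bfa{u}(\cdot,0)\|_c$ term), and finally recombine $\bfa{u}-\bfa{u}_h = \bfa{\eta}+\bfa{\theta}$ with the triangle inequality for both the $c$-norm at time $T$ and the $L^2(0,T;\|\cdot\|_a)$ norm, adding back the $\bfa{\eta}$-contributions which are already of the asserted form. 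Taking the infimum over $\bfa{w} \in \bfa{V}_h$ yields \eqref{lemmain1}.

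The main obstacle is the treatment of the non-symmetric convection and reaction terms hidden inside $b$: these prevent $b(\bfa{\theta},\bfa{\theta})$ from being simply $\|\bfa{\theta}\|_a^2$, and the cross term $b(\bfa{\eta},\bfa{\theta})$ involves $\nabla\bfa{\eta}$ paired against $\bfa{\theta}$ (not $\nabla\bfa{\theta}$). The resolution is exactly Assumption~\ref{bdd_coer_a} (coercivity of the full $b$ in $\|\cdot\|_a$, valid under Assumptions~\ref{bdd} and \ref{assm_alpha}) for the diagonal term, and the Poincaré inequality \eqref{poincare} together with $\underline{\kappa}$-boundedness to convert the mixed $\|\bfa{\eta}\|_a\|\bfa{\theta}\|_a$ and $\|\bfa{\theta}\|_c$-type bounds into absorbable quantities; the convection term on $\bfa{\eta}$ is controlled since $\|\bfa{b}_i\|_\infty \leq \bar b$ and $\|\bfa{\theta}\|_{L^2} \lesssim \|\bfa{\theta}\|_a$. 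A secondary technical point is justifying the energy identity $c(\partial_t\bfa{\theta},\bfa{\theta}) = \tfrac12\tfrac{d}{dt}\|\bfa{\theta}\|_c^2$ rigorously (via the standard Lions--Magenes regularity of the Galerkin solution), but since $\bfa{V}_h$ is finite-dimensional this is routine.
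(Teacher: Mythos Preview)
Your approach is essentially the paper's: Galerkin orthogonality, split the error as $\bfa{\eta}+\bfa{\theta}$, test with $\bfa{\theta}$, use coercivity and boundedness of $b$ from Assumption~\ref{bdd_coer_a}, Young's inequality, integrate in time, and recombine by the triangle inequality. The paper opts for the Gr\"onwall route (multiplying by the integrating factor $e^{-t}$) rather than Poincar\'e absorption of $\|\bfa{\theta}\|_c$, but either choice works here.

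One step you gloss over needs to be made explicit. After the triangle inequality at time $T$ you pick up $\|\bfa{\eta}(\cdot,T)\|_c^2 = \|\bfa{w}(\cdot,T)-\bfa{u}(\cdot,T)\|_c^2$, which is \emph{not} ``already of the asserted form'': the right-hand side of \eqref{lemmain1} carries the $c$-norm only at $t=0$, not at $t=T$. The paper closes this by writing $\bfa{\eta}(\cdot,T)=\bfa{\eta}(\cdot,0)+\int_0^T \partial_t\bfa{\eta}\,\dt$ and applying H\"older, yielding $\|\bfa{\eta}(\cdot,T)\|_c^2 \leq 2\|\bfa{\eta}(\cdot,0)\|_c^2 + 2T\int_0^T\|\partial_t\bfa{\eta}\|_c^2\,\dt$. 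With that insertion your argument is complete.
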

\begin{proof}
The proof is based on \cite{Tony11, mcontinua17}.  From the systems (\ref{eq:vartt}), (\ref{eq:var4}), $c$ as in (\ref{cabi}) and $b$ as in (\ref{bbi}), we get
\beq
\label{eq:varuHu}
\begin{split}
 c\left(\frac{\partial (\bfa{u}-\bfa{u}_h)}{\partial t},\bfa{v}\right) + b(\bfa{u}-\bfa{u}_h,\bfa{v}) = 0\,,
\end{split}
\eeq
for all $\bfa{v} \in \bfa{V}_h$.  

Given $\bfa{w} \in \bfa{V}_h$, let $\bfa{v} = \bfa{w} - \bfa{u}_h \in \bfa{V}_h$.  For the constants $C_1, C_2 > 0$ in Assumption \ref{assumeb}, from (\ref{eq:varuHu}), we obtain
\beq
\label{csi}
\begin{split}
& \frac{1}{2} \ \frac{d}{dt} \ ||\bfa{w} -\bfa{u}_h||_c^2 
+ C_2 \ \norm{\bfa{w}-\bfa{u}_h }_{a}^2  \\
& = c\bigg(\frac{\partial (\bfa{w} -\bfa{u}_h)}{\partial t},\bfa{w} - \bfa{u}_h\bigg)  +  C_2 \ \norm{\bfa{w}-\bfa{u}_h }_{a}^2  \\
& \leq c\bigg(\frac{\partial (\bfa{w} -\bfa{u}_h)}{\partial t},\bfa{w} - \bfa{u}_h\bigg) 
+b(\bfa{w}-\bfa{u}_h , \bfa{w} - \bfa{u}_h)\\
&=c\bigg(\frac{\partial (\bfa{w} -\bfa{u})}{\partial t},\bfa{w} - \bfa{u}_h\bigg) 
+b(\bfa{w}-\bfa{u} , \bfa{w} - \bfa{u}_h )\\
& \leq \left | c\bigg(\frac{\partial (\bfa{w} -\bfa{u})}{\partial t},\bfa{w} - \bfa{u}_h\bigg) \right |
+C_1\norm{\bfa{w}-\bfa{u}}_{a}\norm{\bfa{w}-\bfa{u}_h}_{a}\\
& \leq \norm{\frac{\partial (\bfa{w} -\bfa{u})}{\partial t}}_c\norm{\bfa{w}-\bfa{u}_h}_c + C_1\norm{\bfa{w}-\bfa{u}}_{a}\norm{\bfa{w}-\bfa{u}_h}_{a}\,,
\end{split}
\eeq
where the last inequality follows from the Cauchy-Schwarz inequality.

Applying Young's inequality for the right hand side of the last inequality of (\ref{csi}), we get
\beq\label{young}
\begin{split}
& \frac{1}{2} \ \frac{d}{dt} \ ||\bfa{w} -\bfa{u}_h||_c^2 
+ C_2 \ \norm{\bfa{w}-\bfa{u}_h }_{a}^2  \\
& \leq \frac{1}{2} \norm{\frac{\partial (\bfa{w} -\bfa{u})}{\partial t}}^2_c + \frac{1}{2} \norm{\bfa{w}-\bfa{u}_h}^2_c + \frac{C_1^2}{3C_2}\norm{\bfa{w}-\bfa{u}}_{a}^2 + \frac{3C_2}{4} \norm{\bfa{w}-\bfa{u}_h}_{a}^2\,.
\end{split}
\eeq
Hence,
\beq\label{1ode}
\begin{split}
& \frac{1}{2} \ \frac{d}{dt} \ ||\bfa{w} -\bfa{u}_h||_c^2 - \frac{1}{2} \norm{\bfa{w}-\bfa{u}_h}^2_c +  \frac{C_2}{4} \ \norm{\bfa{w}-\bfa{u}_h }_{a}^2  \\
& \leq \frac{1}{2} \norm{\frac{\partial (\bfa{w} -\bfa{u})}{\partial t}}^2_c +  \frac{C_1^2}{3C_2}\norm{\bfa{w}-\bfa{u}}_{a}^2 \,.
\end{split}
\eeq

Multiplying both sides of (\ref{1ode}) by multiplicative integrating factor $e^{\int (-1) \, \dt} = e^{-t}$, we obtain
\beq\label{mif}
\begin{split}
& \frac{1}{2} \left(  \left(\frac{d}{dt} \ ||\bfa{w} -\bfa{u}_h||_c^2\right)e^{-t} -  e^{-t}  \ \norm{\bfa{w}-\bfa{u}_h}^2_c \right) + e^{-t} \  \frac{C_2}{4} \   \norm{\bfa{w}-\bfa{u}_h }_{a}^2  \\
& \leq e^{-t} \ \left( \frac{1}{2} \norm{\frac{\partial (\bfa{w} -\bfa{u})}{\partial t}}^2_c +  \frac{C_1^2}{3C_2}\norm{\bfa{w}-\bfa{u}}_{a}^2\right) \,.
\end{split}
\eeq
Taking $\displaystyle \int_0^T \cdot \; \dt$ both sides of (\ref{mif}), we get
\beq\label{intT}
\begin{split}
& \frac{1}{2} \   ||\bfa{w}(\cdot,T) -\bfa{u}_h(\cdot,T)||_c^2 \ e^{-T}  + \int_0^T  e^{-t} \  \frac{C_2}{4} \ \norm{\bfa{w}-\bfa{u}_h }_{a}^2 \, \dt \\
& \leq \frac{1}{2} \   ||\bfa{w}(\cdot,0) -\bfa{u}_h(\cdot,0)||_c^2  + \int_0^T  e^{-t} \ \left( \frac{1}{2} \norm{\frac{\partial (\bfa{w} -\bfa{u})}{\partial t}}^2_c +  \frac{C_1^2}{3C_2}\norm{\bfa{w}-\bfa{u}}_{a}^2\right) \, \dt \,.
\end{split}
\eeq
Note that $e^{-T} \leq e^{-t} \leq 1, \; \forall t \in [0,T]$. Let 
\[M = \dfrac{\tu{max} \, \left \{\dfrac{1}{2}, \dfrac{C_1^2}{3C_2}\right \}} {\tu{min} \, \left \{\dfrac{e^{-T}}{2},   \dfrac{e^{-T}C_2}{4} \right \}}\,.\]
Therefore,
\beq\label{int0}
\begin{split}
&    ||\bfa{w}(\cdot,T) -\bfa{u}_h(\cdot,T)||_c^2 \  + \int_0^T  \norm{\bfa{w}-\bfa{u}_h }_{a}^2 \, \dt \\
& \leq  M  \left(   \int_0^T    \norm{\frac{\partial (\bfa{w} -\bfa{u})}{\partial t}}^2_c \, \dt +  \int_0^T \norm{\bfa{w}-\bfa{u}}_{a}^2 \, \dt + ||\bfa{w}(\cdot,0) -\bfa{u}_h(\cdot,0)||_c^2 \right) \,.
\end{split}
\eeq 
We define the initial value $\bfa{u}_h(\cdot, 0)$ such that
$c(\bfa{u}(\cdot,0),\bfa{v}) = c(\bfa{u}_h(\cdot,0),\bfa{v})$, so $||\bfa{u} (\cdot,0)-\bfa{u}_h(\cdot,0)||_c = 0$ for all $\bfa{v} \in  \bfa{V}$. 
By triangle inequality, we thus have 
\begin{equation}\label{proj}
 ||\bfa{w} (\cdot,0)-\bfa{u}_h(\cdot,0)||_c \leq ||\bfa{w} (\cdot,0)-\bfa{u}(\cdot,0)||_c\,.
\end{equation}

From (\ref{int0}) and (\ref{proj}), we obtain
\beq\label{uuh}
\bsp
 &||\bfa{u} (\cdot,T)-\bfa{u}_h(\cdot,T)||_c^2 
+\int_0^T\norm{ \bfa{u}-\bfa{u}_h}_{a}^2 \, \dt\\
& \leq
 2 \left(||\bfa{w} (\cdot,T)-\bfa{u}_h(\cdot,T)||_c^2 +  ||\bfa{w} (\cdot,T)-\bfa{u}(\cdot,T)||_c^2
+ \int_0^T \norm{\bfa{w}-\bfa{u}_h}_{a}^2 \, \dt+ \int_0^T \norm{\bfa{w}-\bfa{u}}_{a}^2 \, \dt \right)\\
& \leq
2 M \int_0^T \norm{\frac{\partial (\bfa{w} -\bfa{u})}{\partial t}}_c^2 \, \dt + 2  M \int_0^T \norm{\bfa{w}-\bfa{u}}_{a}^2 \, \dt
+  2||\bfa{w} (\cdot,T)-\bfa{u}(\cdot,T)||_c^2\\
& \; +2 \int_0^T \norm{\bfa{w}-\bfa{u}}_{a}^2 \, \dt
+  2 M||\bfa{w} (\cdot,0)-\bfa{u}(\cdot,0)||_c^2\,.
\end{split}
\eeq

To simplify the above inequality, we note that 
\begin{equation}\label{normi} 
\norm{ \int_0^T \dfrac{\partial (\bfa{w} -\bfa{u})}{\partial t} \, \dt}_c^2  \leq T \int_0^T \norm{\dfrac{\partial (\bfa{w} -\bfa{u})}{\partial t}}_c^2 \, \dt\,.
 \end{equation}

\noindent Indeed, let 
\begin{equation}\label{zdef}
\bfa{z} =\bfa{z}(\cdot)= (\bfa{w} (\cdot,T)-\bfa{u}(\cdot,T)) - (\bfa{w} (\cdot,0)-\bfa{u}(\cdot,0)) = \dd \int_0^T \dfrac{\partial (\bfa{w} -\bfa{u})}{\partial t} \, \dt\,.
\end{equation}
Then,
\begin{align*}
 \|\bfa{z}\|_c^2 &= \langle \bfa{z} , \bfa{z}\rangle_c = \bigg \langle \bfa{z} \ , \ \int_0^T \dfrac{\partial (\bfa{w} -\bfa{u})}{\partial t} \, \dt \bigg \rangle_c = \int_0^T \bigg \langle \bfa{z} \ , \ \dfrac{\partial (\bfa{w} -\bfa{u})}{\partial t} \bigg \rangle_c \, \dt\\
 & \leq \int_0^T \| \bfa{z} \|_c \, \norm{\dfrac{\partial (\bfa{w} -\bfa{u})}{\partial t}}_c \, \dt = \| \bfa{z} \|_c \int_0^T  \norm{\dfrac{\partial (\bfa{w} -\bfa{u})}{\partial t}}_c \, \dt\,.
\end{align*}
Thus, 
\begin{equation}\label{normi1}
 \|\bfa{z}\|_c \leq \int_0^T  \norm{\dfrac{\partial (\bfa{w} -\bfa{u})}{\partial t}}_c \, \dt\,.
\end{equation}

\noindent Now, by H\"{o}lder's inequality for the right hand side of (\ref{normi1}), we get
\[ \| \bfa{z} \|_c^2 \leq \left(\int_0^T \norm{\dfrac{\partial (\bfa{w} -\bfa{u})}{\partial t}}_c  \cdot 1 \, \dt \right)^2 \leq   T \left (\int_0^T\norm{\dfrac{\partial (\bfa{w} -\bfa{u})}{\partial t}}_c^2 \, \dt \right)\,, \]
 which is (\ref{normi}).  
 
 \noindent Therefore, from (\ref{zdef}), we get
\begin{align*}
 ||\bfa{w} (\cdot,T)-\bfa{u}(\cdot,T)||_c^2 
 & = \| \bfa{z} + (\bfa{w} (\cdot,0)-\bfa{u}(\cdot,0))\|_c^2\\
 &\leq 2 \| \bfa{z}\|^2_c + 2 ||\bfa{w} (\cdot,0)-\bfa{u}(\cdot,0)||_c^2\\
 &\leq 2 T \displaystyle \int_0^T \norm{\dfrac{\partial (\bfa{w} -\bfa{u})}{\partial t}}_c^2 \, \dt + 2 ||\bfa{w} (\cdot,0)-\bfa{u}(\cdot,0)||_c^2\,.
\end{align*}  

 Finally, there exists $C>0$ such that (\ref{uuh}) becomes
\beq
\bsp
 &||\bfa{u} (\cdot,T)-\bfa{u}_h(\cdot,T)||_c^2 
+\int_0^T\norm{ \bfa{u}-\bfa{u}_h}_{a}^2 \, \dt\\
& \leq
C \left(\int_0^T \norm{\frac{\partial (\bfa{w} -\bfa{u})}{\partial t}}_c^2 \, \dt + \int_0^T \norm{\bfa{w}-\bfa{u}}_{a}^2 \, \dt
+  ||\bfa{w} (\cdot,0)-\bfa{u}(\cdot,0)||_c^2\right)\,,
\end{split}
\eeq
and (\ref{lemmain1}) follows.
\end{proof}
Let us define additional bilinear forms before proceeding to the next section.
For $\bfa{u} = (u_1,u_2) \in \bfa{V}$, using notation from (\ref{syasy}), the problem (\ref{eq:vartt}) can be written as
\beq
\label{eq:varus}
\begin{split}
& \int_{\Omega}{\mathcal C}_{11} \frac{\partial u_{1}}{\partial t}v_1 \, \dx
+\int_{\Omega} \kappa_1  \nabla u_{1} \cdot \nabla v_1 \, \dx \\
& + \int_\Omega \bfa{b}_s  \cdot \nabla(u_{1}-u_{2}) v_1\, \dx
+ \int_\Omega \bfa{b}_a  \cdot \nabla(u_{1}-u_{2}) v_1\, \dx\\
& +\int_{\Omega} Q_s (u_{1}-u_{2})v_1  \, \dx
 +\int_{\Omega} Q_a (u_{1}-u_{2})v_1  \, \dx 
 = \int_{\Omega} f_1 v_1 \, \dx\,,\\
 & \int_{\Omega}{\mathcal C}_{22} \frac{\partial u_{2}}{\partial t}v_2 \, \dx
+\int_{\Omega} \kappa_2 \nabla u_{2}\cdot \nabla v_2 \, \dx\\
 & + \int_{\Omega} \bfa{b}_s  \cdot \nabla(u_{2}-u_{1})v_2  \, \dx
 - \int_{\Omega} \bfa{b}_a  \cdot \nabla(u_{2}-u_{1})v_2  \, \dx\\
 & + \int_{\Omega} Q_s (u_{2}-u_{1}) v_2  \, \dx 
 -\int_{\Omega} Q_a (u_{2}-u_{1}) v_2  \, \dx 
 =\int_{\Omega} f_2 v_2 \, \dx\,.
\end{split}
\eeq
Also, we define the following bilinear forms in $\bfa{V}\times \bfa{V}$:
\beq\label{nforms}
\bsp
&\beta((u_1,u_2),(v_1,v_2))= \int_\Omega \bfa{b}_1  \cdot \nabla(u_{1}-u_{2}) v_1\, \dx
+\int_\Omega \bfa{b}_2  \cdot \nabla(u_{2}-u_{1}) v_2\, \dx\,,\\
&q((u_1,u_2),(v_1,v_2)) = \int_{\Omega} Q_1 (u_{1}-u_{2})v_1  \, \dx
+\int_{\Omega} Q_2 (u_{2}-u_{1})v_2  \, \dx\,,\\
&q_s((u_1,u_2),(v_1,v_2)) = \int_{\Omega} Q_s (u_{1}-u_{2})v_1  \, \dx
+\int_{\Omega} Q_s (u_{2}-u_{1})v_2  \, \dx\,,\\
&q_a((u_1,u_2),(v_1,v_2)) = \int_{\Omega} Q_a (u_{1}-u_{2})v_1  \, \dx
-\int_{\Omega} Q_a (u_{2}-u_{1})v_2  \, \dx\,,\\
&a_{Q_s}((u_1,u_2),(v_1,v_2)) = a((u_1,u_2),(v_1,v_2))+q_s((u_1,u_2),(v_1,v_2))\,,\\
&b((u_1,u_2),(v_1, v_2)) = a((u_1,u_2),(v_1,v_2))+\beta((u_1,u_2),(v_1,v_2))+q((u_1,u_2),(v_1,v_2)) \,.\\
\end{split}
\eeq
Here,
\beq\label{nbis}
\bsp
&a_i^{(j)}(u_i,v_i) = \int_{\omega_j} \kappa_i  \nabla u_{i} \cdot \nabla v_i \, \dx\,,\\
&a^{(j)}((u_1,u_2),(v_1,v_2)) = a_1^{(j)}(u_1,v_1) +a_2^{(j)}(u_2,v_2) \,,\\
&a^{(j)}_{Q_s}((u_1,u_2),(v_1,v_2)) = a^{(j)}((u_1,u_2),(v_1,v_2)) + q^{(j)}_s((u_1,u_2),(v_1,v_2)),
\end{split}
\eeq
where $u_1, u_2, v_1, v_2 \in H^1_0(\omega_j) = V(\omega_j)$.
Note that $q_s(\bfa{u},\bfa{v}) = q_s(\bfa{v},\bfa{u})$. We define the norm $||\bfa{u}||_{a_{Q_s}} = a_{Q_s}(\bfa{u},\bfa{u})$.
\section{Overview of the GMsFEM}\label{view}
We refer the readers to \cite{G1} for the details of the GMsFEM, and \cite{gne, mcontinua17} for a brief overview of the GMsFEM.  Broadly speaking, solving Eq.\ (\ref{eq:main1}) on a fine grid using the standard FEM method is very expensive (due to heterogeneous coefficients). If we use coarse grid with the FEM, the solution is not accurate because of the loss of some important local information. Thus, we utilize the GMsFEM, where local problems are solved in each coarse neighborhood, to systematically construct multiscale basis functions containing local heterogenity information. More specifically, by first solving local snapshot and local eigenvalue problems, we then deduce a so-called multiscale space as global offline space $\bfa{V}_{\tu{ms}}$ (consisting of multiscale basis functions).  Hence, for all $\bfa{v}=(v_1,v_2) \in \bfa{V}_{\tu{ms}} $, the GMsFEM solution $\bfa{u}_{\tu{ms}}=(u_{\tu{ms},1}, u_{\tu{ms},2}) \, (\in \bfa{V}_{\tu{ms}} )$ is defined via the following system:
\beq
\label{eq:varuH}
\begin{split}
& \int_{\Omega}{\mathcal C}_{11} \frac{\partial u_{\tu{ms},1}}{\partial t}v_1 \, \dx
+\int_{\Omega} \kappa_1(\bfa{x})  \nabla u_{\tu{ms},1} \cdot \nabla v_1 \, \dx
+ \int_\Omega \bfa{b}_1(\bfa{x})  \cdot \nabla(u_{\tu{ms},1}-u_{\tu{ms},2}) v_1 \, \dx\\
 & \; +\int_{\Omega} Q_1(\bfa{x}) (u_{\tu{ms},1}-u_{\tu{ms},2})v_1  \, \dx 
 = \int_{\Omega} f_1 v_1 \, \dx\,,\\
 & \int_{\Omega}{\mathcal C}_{22} \frac{\partial u_{\tu{ms},2}}{\partial t} v_2 \, \dx
+\int_{\Omega} \kappa_2(\bfa{x}) \nabla u_{\tu{ms},2}\cdot \nabla v_2 \, \dx
 + \int_{\Omega} \bfa{b}_2(\bfa{x})  \cdot \nabla(u_{\tu{ms},2}-u_{\tu{ms},1})v_2  \, \dx\\
 & \; + \int_{\Omega} Q_2(\bfa{x}) (u_{\tu{ms},2}-u_{\tu{ms},1}) v_2  \, \dx 
 =\int_{\Omega} f_2 v_2 \, \dx \,.
\end{split}
\eeq
\subsection{Coarse and fine grids}
First, let $\mathcal{T}^H$ be a coarse grid, with grid size $H$.  In $\mathcal{T}^H$, each coarse block can be denoted by $K_i$.  A refinement of $\mathcal{T}^H$ is called a fine grid $\mathcal{T}_h$, with grid size $h$ ($\ll H$).  We denote by $N$ the total number of coarse blocks, and $N_v$ the total number of interior vertices of $\mathcal{T}^H$.  Let $\{\bfa{x}_i\}_{i=1}^N$ be the set of all vertices in $\mathcal{T}^H$.  The $j$th coarse neighborhood is defined by
\beq
\omega_j = \bigcup \{K_i \in \mathcal{T}^H : \bfa{x}_j \in \overline{K_i} \}.
\eeq  

Next, we will present the definitions of the uncoupled multiscale basis functions (uncoupled GMsFEM) and the coupled multiscale basis functions (coupled GMsFEM).  For each case, based on the above general procedure, we first generate a local snapshot space for each coarse neighborhood $\omega_j$, then solve an appropriate local spectral problem defined on the snapshot space, to establish a multiscale (offline) space.  There are several choices of snapshot spaces (see \cite{G1, gne}, for instance).  In this paper, for each case, its snapshot space is a set of \textbf{harmonic basis functions} (to be specified in the next subsections), which are solutions for the corresponding harmonic extension problem.  Note that the snapshot functions and the basis functions are time-independent.

\subsection{Uncoupled GMsFEM}\label{uncg}
As in \cite{gne}, let $V^i_h(\omega_j)=V_h(\omega_j)$ be a fine-scale FEM space, which is the restriction in $\omega_j$ the conforming space $V^i_h =V_h$ (introduced in Section \ref{fsfed}), for the $i$th continuum ($i=1,2$).  Let $J_h(\omega_j)$ be the set of all nodes of the fine grid $\mathcal{T}_h$ belonging to $\partial \omega_j$.  We denote by $J_j$ the cardinality of $J_h(\omega_j)$.  

For the case of uncoupled GMsFEM, multiscale basis functions will be established for each $i$th continuum separately, by taking into account only the permeability $\kappa_i$ and neglecting the transfer functions.  

More specifically, on each coarse neighborhood $\omega_j$, for each $i$th continuum, we first find the $k$th snapshot function $\phi_{k,i}^{(j),\tu{snap}} \in V_h(\omega_j)$ such that
\beq\label{snapu}
\bsp
-\div ( \kappa_i \nabla \phi_{k,i}^{(j),\tu{snap}} ) &= 0 \ \ \ \text{in} \ \omega_j,\\
\phi_{k,i}^{(j),\tu{snap}} &= \delta_{k,i} \ \ \ \text{on} \ \partial \omega_j\,,
\end{split}
\eeq
where $\delta_{k,i}$ is a discrete delta function such that 
\[\delta_{k,i}(\bfa{x}^j_l) =
 \begin{cases}
  1 \quad l = k\,,\\
  0 \quad l \ne k\,,
 \end{cases}
\]
for all $\bfa{x}^j_l$ in $J_h(\omega_j)\,,$ $1 \leq k \leq J_j$.  The solutions of this problem (\ref{snapu}) are called harmonic basis functions.
Then, the local snapshot space on $\omega_j$ for the $i$th continuum is defined as
\beq
V^i_{\tu{snap}}(\omega_j) = \text{span}\{ \phi_{k,i}^{(j),\tu{snap}} \, \bigr | \,  1 \leq k \leq J_j \}\,,
\eeq
where $J_j$ is the cardinality of $J_h(\omega_j)$ as above.

To construct local multiscale basis functions on $\omega_j$ corresponding to the $i$th continuum ($i=1,2$), we now solve local spectral problems:  Find the eigenfunctions $\psi_{k,i}^{(j)} \in V^i_{\tu{snap}}(\omega_j)$ and eigenvalues $\lambda_{k,i}^{(j)}\in \mathbb{R}$ such that
\beq\label{eeunc}
a_i^{(j)}(\psi_{k,i}^{(j)},v_i) = \lambda_{k,i}^{(j)} s_i^{(j)}(\psi_{k,i}^{(j)},v_i)\,,
\eeq
for all $v_i$ in $V^i_{\tu{snap}}(\omega_j) $, where $s_i^{(j)}$ is defined as follows (\cite{gne,mcontinua17}):
\beq\label{schi}
s_i^{(j)}(u_i,v_i) = \int_{\omega_j} \kappa_i\left( \sum_{j=1}^{N_v} |\nabla \chi_{j,i}|^2\right) u_i v_i \, \dx\,,
\eeq
where each $\chi_{j,i}$ is a standard multiscale finite element basis function for the coarse node $\bfa{x}_j$ (that is, with linear boundary
conditions for cell problems) in the $i$th continuum, and $\{\chi_{j,i}\}_{j=1}^{N_v}$ is a set of partition of unity functions (for coarse grid) supported in the intersection of $\omega_j$ and the $i$th continuum.  More specifically, based on \cite{pou},
\beq\label{chiu}
\bsp
-\div (\kappa_i \nabla \chi_{j,i}) &= 0 \ \ \textrm{in } K \in \omega_j\,,\\
\chi_{j,i} & = \chi_{j,i}^0\ \ \textrm{on} \  \ \partial K\,, \quad \forall K \in \omega_j\,,
\end{split}
\eeq
where each $\chi_{j,i}^0$ is a standard linear (and continuous) partition of unity function, and note that $\chi_{j,i}^0 = 0  \textrm{ on } \partial \omega_j\,.$



%
After sorting the eigenvalues $\lambda_{k,i}^{(j)} \, (\tu{for } k=1,2, \cdots)$ from (\ref{eeunc}) in ascending order, we choose the first corresponding $L_j$ eigenfunctions from (\ref{eeunc}), and still denote them by $\psi_{1,i}^{(j)}, \cdots, \psi_{L_j,i}^{(j)}$.  At the last step, the $k$th multiscale basis function for the $i$th continuum on the coarse neighborhood $\omega_j$ is defined by
\begin{equation}\label{mbsu}
 \psi_{k,i}^{(j),\tu{ms}} = \chi_{j,i} \psi_{k,i}^{(j)}\,,
\end{equation}
where $1 \leq k\leq L_j$, and $\{\chi_{j,i}\}_{j=1}^{N_v}$ is from (\ref{chiu}).

We define the local auxiliary offline multiscale space $V_{\tu{ms}}^{i}(\omega_j) $ for the coarse neighborhood $\omega_j$ corresponding to the $i$th continuum, using the first
$L_j$ multiscale basis functions as follows:
\beq
V_{\tu{ms}}^{i}(\omega_j) = \text{span}\left \{\psi_{k,i}^{(j),\tu{ms}} \, \bigr | \, 1 \leq k\leq L_j \right \}\,.
\eeq
Then, the global offline space for the $i$th continuum is 
\[V_{\tu{ms}}^i = \sum\limits_{j=1}^{N_v} V_{\tu{ms}}^{i}(\omega_j)= \text{span}\left \{\psi_{k,i}^{(j),\tu{ms}} \, \bigr | \, 1 \leq j \leq N_v \,, 1\leq k\leq L_j \right \}\,.\]
The multiscale space $\bfa{V}_{\tu{ms}}$ can be taken as the global offline space:  $\bfa{V}_{\tu{ms}} = V_{\tu{ms}}^{1}\times V_{\tu{ms}}^{2}\,.$
\subsection{Coupled GMsFEM}\label{cg}
In the coupled GMsFEM, the multiscale basis functions will be created by first solving a coupled problem for snapshot space, then applying a spectral decomposition.  

Note that for the case of coupled GMsFEM, the interaction terms $Q_1$ and $Q_2$ from (\ref{eq:vartt}) will be taken into account.  For eigenvalue problem, the operator should be symmetric.  Therefore, we wish to only consider the dominant symmetric part $Q_s$ (of $Q_1$ and $Q_2$) and ignore $Q_a$ from (\ref{eq:varus}), which is equivalent to (\ref{eq:vartt}).  In order to do so, we will utilize Assumption \ref{bdd} (that is, $|\bfa{b}_s| \gg |\bfa{b}_a|$ and  $|Q_s| \gg |Q_a|$) and Lemma \ref{ceamsaq2} in Section \ref{ca}.

More specifically, we find the snapshot functions $\bfa{\phi}_{k,r}^{(j),\tu{snap}} = \left (\phi_{k,1,r}^{(j),\tu{snap}},\phi_{k,2,r}^{(j),\tu{snap}}\right)$ in $\bfa{V}_h(\omega_j)=V_h(\omega_j)\times V_h(\omega_j)$ (the spaces are from Subsections \ref{fsfed} and \ref{uncg}) such that
\beq\label{snapc}
\bsp
-\div \left( \kappa_1 \nabla \phi_{k,1,r}^{(j),\tu{snap}} \right) + Q_s \left(\phi_{k,1,r}^{(j),\tu{snap}}-\phi_{k,2,r}^{(j),\tu{snap}}\right)= 0 \ \ \ \text{in} \ \omega_j,\\
-\div \left( \kappa_2 \nabla \phi_{k,2,r}^{(j),\tu{snap}} \right) + Q_s \left(\phi_{k,2,r}^{(j),\tu{snap}}-\phi_{k,1,r}^{(j),\tu{snap}}\right)= 0 \ \ \ \text{in} \ \omega_j,\\
\bfa{\phi}_{k,r}^{(j),\tu{snap}} = \bfa{\delta}_{k,r} \ \ \ \text{on} \ \partial \omega_j\,,
\end{split}
\eeq
where each $\bfa{\delta}_{k,r}$ is defined as
\beq
\bsp
\bfa{\delta}_{k,r}(\bfa{x}_l) = \delta_k(\bfa{x}_l) \bfa{e}_r, \ \ r = 1,2\,,
\end{split}
\eeq
in which $\{\bfa{e}_r \, | \, r=1,2\}$ is a standard basis in $\mathbb{R}^2\,,$ $1 \leq k \leq J_j$.  The solutions of this problem (\ref{snapc}) are called harmonic basis functions.
Then, the local snapshot space is defined as
\beq
\bfa{V}_{\tu{snap}}(\omega_j) = \text{span} \left \{ \bfa{\phi}_{k,r}^{(j),\tu{snap}} \, \bigr | \,  1\leq k \leq J_j,\ 1 \leq r \leq 2 \right \}\,.
\eeq

Next, local eigenvalue problems are solved, to construct local multiscale basis functions.  That is, we find the eigenfunctions $\bfa{\psi}_{k}^{(j)} = \left(\psi_{k,1}^{(j)}, \psi_{k,2}^{(j)}\right) \in \bfa{V}_{\tu{snap}}(\omega_j)$ and eigenvalues $\lambda_{k}^{(j)} \in \mathbb{R}$ such that
\beq\label{eec}
a_{Q_s}^{(j)}\left(\bfa{\psi}_{k}^{(j)},\bfa{v}\right) = \lambda_{k}^{(j)} s^{(j)}\left(\bfa{\psi}_{k}^{(j)},\bfa{v}\right),
\eeq
for all $\bfa{v} \in \bfa{V}_{\tu{snap}}(\omega_j) $, where $s^{(j)}$ is defined as follows (\cite{gne,mcontinua17}):
\beq\label{pouu}
s^{(j)}(\bfa{u},\bfa{v}) = \sum_{i=1}^2 s_i^{(j)}(u_i,v_i) = \sum_{i=1}^2 \int_{\omega_j} \kappa_i\left( \sum_{j=1}^{N_v} |\nabla \chi_{j,i}|^2\right) u_i v_i \, \dx\,,
\eeq
in which $\{\chi_{j,i}\}_{j=1}^{N_v}$ is from (\ref{chiu}).

After arranging the eigenvalues $\lambda_{k}^{(j)} \, (\tu{for } k=1,2,\cdots)$ from (\ref{eec}) in ascending order, we take the first corresponding $L_j$ eigenfunctions from (\ref{eec}), and still denote them by $\bfa{\psi}_1^{(j)}, \cdots,\bfa{\psi}_{L_j}^{(j)}$.  
At the final step, we define the $k$th
multiscale basis functions for the coarse region $\omega_j$ by 
\begin{equation}\label{msc}
\bfa{\psi}_{k}^{(j),\tu{ms}} =  (\chi_{j,1} \, \psi_{k,1}^{(j)} \,, \chi_{j,2} \, \psi_{k,2}^{(j)})\,,
\end{equation}
where $1 \leq k \leq L_j$, and $\{\chi_{j,i}\}_{j=1}^{N_v}$ is from (\ref{chiu}).  

The local auxiliary offline multiscale space $\bfa{V}_{\tu{ms}}(\omega_j) $ is defined by using the first $L_j$ multiscale basis functions as follows: 
\beq
\bfa{V}_{\tu{ms}}(\omega_j)= \text{span}\left \{\bfa{\psi}_{k}^{(j),\tu{ms}} \, \bigr | \, 1\leq k\leq L_j \right \}\,.
\eeq
Then, the multiscale space $\bfa{V}_{\tu{ms}}$ can be taken as the global offline space: 
\[\bfa{V}_{\tu{ms}} =\sum\limits_{j=1}^{N_v} \bfa{V}_{\tu{ms}}(\omega_j) = \text{span}\left \{\bfa{\psi}_{k}^{(j),\tu{ms}} \, \bigr | \, 1 \leq j \leq N_v \,, 1\leq k\leq L_j \right \}\,.\]
\section{Convergence Analysis (GMsFEM)}\label{ca}

In this section, we show convergence analysis for both uncoupled and coupled GMsFEM.  First, best (a-priori) error estimate is provided, for our semi-discrete problem.  We will compare the difference between the reference weak solution $\bfa{u} \in \bfa{V}$ defined in (\ref{eq:vartt}) and the multiscale solution $\bfa{u}_{\tu{ms}} \in \bfa{V}_{\tu{ms}}$ defined in (\ref{eq:varuH}), by using the projection error of $\bfa{u}$ onto $\bfa{V}_{\tu{ms}}$ in various norms.
\begin{lemma}
\label{ceamsa}
Under Assumption \ref{bdd_coer_a}, for $\bfa{u}$ and $\bfa{u}_{\tu{ms}}$ defined in (\ref{eq:vartt}) and (\ref{eq:varuH}), respectively, where $\bfa{V}_{\tu{ms}}$ is constructed via the uncoupled GMsFEM, we have the following result:
\beq\label{err1}
\bsp
& ||\bfa{u} (\cdot,T)- \bfa{u}_{\tu{ms}}(\cdot,T)||_c^2 
+ \int_0^T ||\bfa{u}-\bfa{u}_{\tu{ms}}||_{a}^2 \, \dt\\
& \leq
 C \inf\limits_{w \in \bfa{V}_{\tu{ms}}} \bigg(\int_0^T \norm{\frac{\partial (\bfa{w} - \bfa{u})}{\partial t}}_c^2 \, \dt 
 + \int_0^T \norm{\bfa{w} -\bfa{u}}_{a}^2 \, \dt
+ ||\bfa{w} (\cdot,0)- \bfa{u}(\cdot,0)||_c^2\bigg).
\end{split}
\eeq
\end{lemma}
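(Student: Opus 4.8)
The plan is to recognize that this lemma is the exact analogue of Lemma~\ref{ceafem1}, with the fine-scale space $\bfa{V}_h$ and its solution $\bfa{u}_h$ replaced by the offline multiscale space $\bfa{V}_{\tu{ms}}$ and its solution $\bfa{u}_{\tu{ms}}$, and that the proof of Lemma~\ref{ceafem1} used nothing about $\bfa{V}_h$ beyond three abstract facts: that it is a conforming subspace of $\bfa{V}$; that the Galerkin-type orthogonality relation obtained by subtracting \eqref{eq:varuH} from \eqref{eq:vartt} holds when the test function is taken in it; and that the discrete initial datum may be chosen as the $c$-orthogonal projection of $\bfa{u}(\cdot,0)$ onto it. All three remain valid for $\bfa{V}_{\tu{ms}}$ --- the inclusion $\bfa{V}_{\tu{ms}}\subset\bfa{V}$ holds by construction of the multiscale basis, and the orthogonality is immediate from the definition \eqref{eq:varuH} of $\bfa{u}_{\tu{ms}}$ --- so I would run through the same argument, quoting the routine estimates rather than re-deriving them.

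Concretely, I would first record the error equation $c(\partial_t(\bfa{u}-\bfa{u}_{\tu{ms}}),\bfa{v})+b(\bfa{u}-\bfa{u}_{\tu{ms}},\bfa{v})=0$ for all $\bfa{v}\in\bfa{V}_{\tu{ms}}$, fix an arbitrary $\bfa{w}\in\bfa{V}_{\tu{ms}}$, and test with $\bfa{v}=\bfa{w}-\bfa{u}_{\tu{ms}}$. Using the boundedness and coercivity of $b$ from Assumption~\ref{bdd_coer_a} (with constants $C_1$, $C_2$), followed by the Cauchy--Schwarz and Young inequalities applied to the cross terms exactly as in the display chain of Lemma~\ref{ceafem1}, this yields a linear differential inequality for $\norm{\bfa{w}-\bfa{u}_{\tu{ms}}}_c^2$ carrying $\frac{C_2}{4}\norm{\bfa{w}-\bfa{u}_{\tu{ms}}}_a^2$ on the left and $\norm{\partial_t(\bfa{w}-\bfa{u})}_c^2$ together with $\norm{\bfa{w}-\bfa{u}}_a^2$ on the right.

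Next I would multiply by the integrating factor $e^{-t}$, integrate over $[0,T]$, and use $e^{-T}\le e^{-t}\le 1$ to absorb the exponential into a constant $M$ (as in Lemma~\ref{ceafem1}), obtaining a bound on $\norm{\bfa{w}(\cdot,T)-\bfa{u}_{\tu{ms}}(\cdot,T)}_c^2+\int_0^T\norm{\bfa{w}-\bfa{u}_{\tu{ms}}}_a^2\,\dt$ by the right-hand side plus $\norm{\bfa{w}(\cdot,0)-\bfa{u}_{\tu{ms}}(\cdot,0)}_c^2$. I would then fix $\bfa{u}_{\tu{ms}}(\cdot,0)\in\bfa{V}_{\tu{ms}}$ by $c(\bfa{u}(\cdot,0),\bfa{v})=c(\bfa{u}_{\tu{ms}}(\cdot,0),\bfa{v})$ for all $\bfa{v}\in\bfa{V}_{\tu{ms}}$, so that $\norm{\bfa{u}(\cdot,0)-\bfa{u}_{\tu{ms}}(\cdot,0)}_c=0$ and the triangle inequality gives $\norm{\bfa{w}(\cdot,0)-\bfa{u}_{\tu{ms}}(\cdot,0)}_c\le\norm{\bfa{w}(\cdot,0)-\bfa{u}(\cdot,0)}_c$. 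Finally, to replace $\bfa{u}_{\tu{ms}}$ by $\bfa{u}$ on the left, I would use the triangle inequality in the $c$- and $a$-norms together with the time-interpolation estimate \eqref{normi}, which controls $\norm{\bfa{w}(\cdot,T)-\bfa{u}(\cdot,T)}_c^2$ by $2T\int_0^T\norm{\partial_t(\bfa{w}-\bfa{u})}_c^2\,\dt$ and $2\norm{\bfa{w}(\cdot,0)-\bfa{u}(\cdot,0)}_c^2$, and then take the infimum over $\bfa{w}\in\bfa{V}_{\tu{ms}}$ to reach \eqref{err1}.

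I do not anticipate a real obstacle: unlike a fully discrete error bound, no approximation property of the space is invoked here --- the estimate is stated as a best-approximation (projection) bound, so the construction of $\bfa{V}_{\tu{ms}}$ enters only through $\bfa{V}_{\tu{ms}}\subset\bfa{V}$ and the orthogonality, both trivially true. The one point worth stating carefully is that the resulting constant $C$ depends only on $T$, $C_1$ and $C_2$ (through $M$), and in particular is independent of $H$, $h$ and the number of multiscale basis functions; this uniformity is exactly what makes \eqref{err1} a useful a-priori bound, to be combined later with an estimate of the projection error onto $\bfa{V}_{\tu{ms}}$.
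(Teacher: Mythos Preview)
Your proposal is correct and matches the paper's approach exactly: the paper's proof of Lemma~\ref{ceamsa} is the single sentence ``The proof is similar to that of Lemma~\ref{ceafem1},'' and what you have written is precisely a faithful transcription of that proof with $\bfa{V}_h$, $\bfa{u}_h$ replaced by $\bfa{V}_{\tu{ms}}$, $\bfa{u}_{\tu{ms}}$. Your observation that only conformity, Galerkin orthogonality, and the choice of $c$-projected initial datum are used---and that all three carry over verbatim---is exactly the point.
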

\begin{proof}
The proof is similar to that of Lemma \ref{ceafem1}.
\end{proof}

In the spirit of this Lemma, based on \cite{mcontinua17}, to complete the convergence proof for our proposed approach, we will find an appropriate function $\bfa{w}$ in the multiscale space $\bfa{V}_{\tu{ms}}$, then estimate the error $\bfa{w} - \bfa{u}$ (the so-called projection error of $\bfa{u}$ onto $\bfa{V}_{\tu{ms}}$) in various norms on the right hand side of (\ref{err1}).  More specifically, we will define an approximation $\bfa{u}_{\tu{snap}} \in \bfa{V}_{\tu{snap}}$ (called snapshot projection) of $\bfa{u}$ in the snapshot space (which is the set of all snapshot functions).  We can express $\bfa{w} - \bfa{u} = \bfa{w} - \bfa{u}_{\tu{snap}} + \bfa{u}_{\tu{snap}} - \bfa{u}$, where the last term $\bfa{u}_{\tu{snap}} - \bfa{u}$ corresponds to an irreducible error of our method, and can be assumed to be very small by utilizing a large enough collection of snapshot functions.  It hence suffices to only estimate $\bfa{w} - \bfa{u}_{\tu{snap}}$ by choosing a suitable function $\bfa{w} \in \bfa{V}_{\tu{ms}}$.


We will define $\bfa{w} \in \bfa{V}_{\tu{ms}}$ as the projection of $\bfa{u}_{\tu{snap}}$ onto the multiscale space $\bfa{V}_{\tu{ms}}$.  In particular, first, in the case of uncoupled GMsFEM, the snapshot projection $\bfa{u}_{\tu{snap}}$ (in $\bfa{V}_{\tu{snap}}$) of $\bfa{u}$ can be represented by the set of  $\psi_{k,i}^{(j)}(\bfa{x})$ from (\ref{eeunc}) as follows:
\beq\label{usnapu}
\bsp
\bfa{u}_{\tu{snap}}(\bfa{x},t) = (u_{\tu{snap},1}, u_{\tu{snap},2}) \,, \quad u_{\tu{snap},i} = \sum\limits_{j=1}^{N_v} \sum\limits_{k} d_{k,i}^{(j)}(t) \, \chi_{j,i}(\bfa{x}) \, \psi_{k,i}^{(j)}(\bfa{x})\,.
 \end{split}
 \eeq
 We define the local component of $u^{(j)}_{\tu{snap},i}$ by
 \beq
 u^{(j)}_{\tu{snap},i}(\bfa{x},t)=\sum\limits_{k} d_{k,i}^{(j)}(t) \,  \psi_{k,i}^{(j)}(\bfa{x})\,, \quad \tu{ with } u^{(j)}_{\tu{snap},i} |_{\partial \omega_j} = u_i|_{\partial \omega_j}\,.
 \eeq
 Then, the projection $\bfa{w}$ of $\bfa{u}_{\tu{snap}}$ in the multiscale space $\bfa{V}_{\tu{ms}}$ is defined as
 \begin{equation}\label{wu}
  \bfa{w}(\bfa{x},t)= (w_1,w_2) \,, \quad w_i 
  = \sum\limits_{j=1}^{N_v} \sum\limits_{k=1}^{L_j} d_{k,i}^{(j)}(t) \, \psi_{k,i}^{(j), \tu{ms}}(\bfa{x})
  =\sum\limits_{j=1}^{N_v} \sum\limits_{k=1}^{L_j} d_{k,i}^{(j)}(t) \, \chi_{j,i}(\bfa{x}) \, \psi_{k,i}^{(j)}(\bfa{x})\,,
 \end{equation}
where the collection of local multiscale basis functions $\left \{\psi_{k,i}^{(j), \tu{ms}}(\bfa{x}) \, \bigr | \, 1\leq k \leq L_j\right \}$ is from (\ref{mbsu}).

Second, in the case of coupled GMsFEM, the snapshot projection $\bfa{u}_{\tu{snap}}$ (in $\bfa{V}_{\tu{snap}}$) of $\bfa{u}$ can be represented by the set of  $\bfa{\psi}_{k}^{(j)}(\bfa{x}) = \left(\psi_{k,1}^{(j)}(\bfa{x}), \psi_{k,2}^{(j)}(\bfa{x})\right)$ from (\ref{eec}) as follows:
\beq\label{usnapc}
\bsp
\bfa{u}_{\tu{snap}}(\bfa{x},t) = (u_{\tu{snap},1}, u_{\tu{snap},2}) \,, \quad u_{\tu{snap},i} = \sum\limits_{j=1}^{N_v} \sum\limits_{k} d_{k,i}^{(j)}(t) \, \chi_{j,i}(\bfa{x}) \, \psi_{k,i}^{(j)}(\bfa{x})\,.
 \end{split}
 \eeq
 We define the local component of $u^{(j)}_{\tu{snap},i}$ by
 \beq
 u^{(j)}_{\tu{snap},i}(\bfa{x},t)=\sum\limits_{k} d_{k,i}^{(j)}(t) \,  \psi_{k,i}^{(j)}(\bfa{x})\,, \quad \tu{ with } u^{(j)}_{\tu{snap},i} |_{\partial \omega_j} = u_i|_{\partial \omega_j}\,.
 \eeq
 Then, the projection $\bfa{w}$ of $\bfa{u}_{\tu{snap}}$ in the multiscale space $\bfa{V}_{\tu{ms}}$ is defined as
 \begin{equation}\label{wc}
  \bfa{w}(\bfa{x},t)= (w_1,w_2) \,, \quad w_i 
  = \sum\limits_{j=1}^{N_v} \sum\limits_{k=1}^{L_j} d_{k,i}^{(j)}(t) \, \psi_{k,i}^{(j), \tu{ms}}(\bfa{x})
  =\sum\limits_{j=1}^{N_v} \sum\limits_{k=1}^{L_j} d_{k,i}^{(j)}(t) \, \chi_{j,i}(\bfa{x}) \, \psi_{k,i}^{(j)}(\bfa{x})\,,
 \end{equation}
where the collection of local multiscale basis functions $\left \{\bfa{\psi}_{k}^{(j), \tu{ms}}(\bfa{x}) \, \bigr | \, 1\leq k \leq L_j\right \}$ is from (\ref{msc}).

Now, we present the main results of this section.


\subsection{Uncoupled GMsFEM}
Convergence analysis is presented for the uncoupled GMsFEM.  We will compare the difference between the reference weak solution $\bfa{u}$ defined in (\ref{eq:vartt}) and the multiscale solution $\bfa{u}_{\tu{ms}}$ defined in (\ref{eq:varuH}) from the uncoupled GMsFEM.
 
\begin{lemma}
\label{uncoupledchi}
For the uncoupled GMsFEM, if $\bfa{u}$ in (\ref{eq:vartt}) satisfies
\beq
\int_{\omega_j} \kappa_1 \nabla u_1 \cdot \nabla v_1 \, \dx
+\int_{\omega_j} \kappa_2 \nabla u_2 \cdot \nabla v_2 \, \dx
= \int_{\omega_j} f_1 v_1\, \dx + \int_{\Omega_j} f_2 v_2\, \dx\,,
\eeq
for all $\bfa{v} \in \bfa{V}(\omega_j)$, then we have
\beq\label{un1i}
\bsp
& \int_{\omega_j} \kappa_1 \chi_{j,1}^2 |\nabla u_1|^2 \, \dx
+\int_{\omega_j} \kappa_2 \chi_{j,2}^2 |\nabla u_2|^2 \, \dx\\
& \leq 
C \sum_{i=1}^2 \bigg( \int_{\omega_j} \frac{\chi_{j,i}^4}{\kappa_i |\nabla \chi_{j,i}|^2} f_i^2\, \dx 
+  \int_{\omega_j} \kappa_i |\nabla \chi_{j,i}|^2  u_i^2 \, \dx
\bigg)\,.
\end{split}
\eeq
\end{lemma}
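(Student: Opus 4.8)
The plan is to test the local variational identity in the hypothesis with the weighted function $\bfa{v} = (\chi_{j,1}^2 u_1,\, \chi_{j,2}^2 u_2)$. First I would check that this is admissible: each partition-of-unity function $\chi_{j,i}$ vanishes on $\partial\omega_j$ because $\chi_{j,i}^0 = 0$ on $\partial\omega_j$ in the construction (\ref{chiu}), hence $\chi_{j,i}^2 u_i \in H^1_0(\omega_j)$ and $\bfa{v} \in \bfa{V}(\omega_j)$, so the identity applies to it.

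Next, using $\nabla(\chi_{j,i}^2 u_i) = \chi_{j,i}^2 \nabla u_i + 2\chi_{j,i}\, u_i \nabla\chi_{j,i}$ and rearranging, the identity becomes
\beq
\bsp
\sum_{i=1}^2 \int_{\omega_j} \kappa_i \chi_{j,i}^2 |\nabla u_i|^2 \, \dx
&= \sum_{i=1}^2 \int_{\omega_j} f_i \chi_{j,i}^2 u_i \, \dx\\
&\quad - 2\sum_{i=1}^2 \int_{\omega_j} \kappa_i \chi_{j,i}\, u_i \, \nabla u_i \cdot \nabla\chi_{j,i}\, \dx\,,
\eesp
\eeq
whose left-hand side is exactly the quantity to be estimated.

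Then I would bound the two terms on the right by Cauchy--Schwarz followed by Young's inequality, splitting the weights so that a controllable fraction of $\sum_i \int_{\omega_j}\kappa_i\chi_{j,i}^2|\nabla u_i|^2$ is absorbed back into the left-hand side. For the source term I would factor $f_i\chi_{j,i}^2 u_i = \big(\chi_{j,i}^2 f_i / (\kappa_i^{1/2}|\nabla\chi_{j,i}|)\big)\big(\kappa_i^{1/2}|\nabla\chi_{j,i}|\, u_i\big)$, so that Young's inequality yields $\tfrac12\int_{\omega_j}\chi_{j,i}^4 f_i^2/(\kappa_i|\nabla\chi_{j,i}|^2)\,\dx + \tfrac12\int_{\omega_j}\kappa_i|\nabla\chi_{j,i}|^2 u_i^2\,\dx$; for the cross term I would factor the integrand as $\big(\kappa_i^{1/2}\chi_{j,i}|\nabla u_i|\big)\big(\kappa_i^{1/2}|\nabla\chi_{j,i}|\,|u_i|\big)$ and apply Young's inequality with a small parameter to get $\tfrac12\int_{\omega_j}\kappa_i\chi_{j,i}^2|\nabla u_i|^2\,\dx + 2\int_{\omega_j}\kappa_i|\nabla\chi_{j,i}|^2 u_i^2\,\dx$. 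Summing over $i$, moving the $\tfrac12$-terms to the left, and multiplying through gives (\ref{un1i}) with an explicit constant (one checks $C = 5$ works for these parameter choices).

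I do not anticipate a substantive difficulty; the only technical point to keep in mind is the meaning of the weight $\chi_{j,i}^4/(\kappa_i|\nabla\chi_{j,i}|^2)$ where $\nabla\chi_{j,i}$ may vanish. Since everything is posed on the fixed fine grid $\mathcal{T}_h$ on which $\chi_{j,i}$ is a given finite element function, this quotient is understood in the usual elementwise sense employed for GMsFEM stability bounds, and the estimate is vacuous on any set where the right-hand integrand is infinite. Apart from tracking the constants in Young's inequality, the argument is routine.
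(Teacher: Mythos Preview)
Your proposal is correct and follows essentially the same route as the paper: test with $v_i=\chi_{j,i}^2 u_i$, expand $\nabla(\chi_{j,i}^2 u_i)$, and use Young's inequality with the same weight splittings to absorb the $\kappa_i\chi_{j,i}^2|\nabla u_i|^2$ term. The paper carries a single parameter $\epsilon$ through both applications of Young and then sets $\epsilon=1/2$, whereas you fix the parameters up front and track an explicit constant, but the arguments are otherwise identical.
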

\begin{proof}
We base on \cite{mcontinua17} for the proof.  Take $v_i = (\chi^2_{j,i})u_i \ (\tu{for } i = 1,2)$, we obtain 
\begin{align*}
\sum_{i=1}^2 \int_{\omega_j} \kappa_i (\nabla u_i) \cdot \nabla(\chi_{j,i}^2 u_i) \, \dx = \sum_{i=1}^2 \int_{\omega_j} f_i(\chi_{j,i}^2)u_i \, \dx\,. 
\end{align*}
This leads to
\begin{align*}
\sum_{i=1}^2 \int_{\omega_j} \kappa_i \chi_{j,i}^2 |\nabla u_i|^2 \, \dx & = \sum_{i=1}^2 \int_{\omega_j} f_i \frac{\chi_{j,i}^2}{(\nabla \chi_{j,i}) \sqrt{\kappa_i}} \, \sqrt{\kappa_i} u_i \nabla \chi_{j,i} \, \dx \\
& \tu{ } - 2 \sum_{i=1}^2 \int_{\omega_j} \kappa_i (\chi_{j,i}) (\nabla u_i) \cdot (\nabla \chi_{j,i}) u_i \, \dx\\
& \leq \frac{\e}{2} \sum_{i=1}^2 \int_{\omega_j} f_i^2 \, \frac{\chi_{j,i}^4}{|\nabla \chi_{j,i}|^2 \, \kappa_i} \, \dx + \frac{1}{2\e} \sum_{i=1}^2 \int_{\omega_j} \kappa_i (u_i \nabla \chi_{j,i})^2 \, \dx\\
& + \e \sum_{i=1}^2 \int_{\omega_j} \kappa_i \chi_{j,i}^2 |\nabla u_i|^2 \, \dx + \frac{1}{\e} \sum_{i=1}^2 \int_{\omega_j} \kappa_i (u_i \nabla \chi_{j,i})^2 \, \dx\,,
\end{align*}
where the last inequality follows from Young's inequality.  Let $\e = 1/2$, and move the third term on the right hand side to the left hand side of the above inequality.  Then, for some constant $C>0$, the desired inequality (\ref{un1i}) holds.
\end{proof}
We finally have the following error estimate.
\begin{theorem}
\label{mainu2}
Let $\bfa{u}$ be the solution of (\ref{eq:vartt}), $\bfa{u}_{\tu{snap}}$ and $\bfa{w}$ be defined in (\ref{usnapu}) and (\ref{wu}), respectively.  Then, we obtain the following result: 
\beq\label{convmsa}
\bsp
&  \int_0^T  \left \| \frac{\partial(\bfa{w} -\bfa{u}_{\tu{snap}})}{\partial t}\right \|_c^2 \, \dt 
+  \int_0^T  || \bfa{w}-\bfa{u}_{\tu{snap}}||_a^2 \, \dt + ||\bfa{w}(\cdot,0) - \bfa{u}_{\tu{snap}} (\cdot,0)||_c^2 \\
& \leq \frac{C}{\Lambda_1}\left( \int_0^T  \left \| \frac{\partial \bfa{u}}{\partial t}\right \|_a^2 \, \dt+
\int_0^T ||\bfa{u}||^2_a  \, \dt +||\bfa{u}(\cdot,0)||^2_a \right),
\end{split}
\eeq
where $\Lambda_1=\min\limits_{j,i} \{\lambda_{L_j+1,i}^{(j)}\}\,.$ 
\end{theorem}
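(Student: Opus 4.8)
The plan is to reduce every term on the left of (\ref{convmsa}) to a sum of patch-wise quantities on the coarse neighborhoods and then extract the factor $1/\Lambda_1$ from the spectral gap of (\ref{eeunc}), the key enabling fact being that the truncation tails are $\kappa_i$-harmonic. Concretely, from (\ref{usnapu})--(\ref{wu}), set $R_i^{(j)} := \sum_{k>L_j} d_{k,i}^{(j)}(t)\,\psi_{k,i}^{(j)}$; since $\{\chi_{j,i}\}_{j=1}^{N_v}$ is a partition of unity, one has $w_i-u_{\tu{snap},i} = -\sum_{j=1}^{N_v}\chi_{j,i}\,R_i^{(j)}$. Each snapshot function in (\ref{snapu}), hence each eigenfunction $\psi_{k,i}^{(j)}$, is $\kappa_i$-harmonic in $\omega_j$, so $R_i^{(j)}$ and $\partial_t R_i^{(j)}$ are $\kappa_i$-harmonic in $\omega_j$ as well; moreover $\chi_{j,i}$ vanishes on $\partial\omega_j$ by (\ref{chiu}). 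I would first record these structural facts, since they are exactly what allows the spectral gap to appear.

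Next, using the bounded overlap of the $\{\omega_j\}$ and the product rule $\nabla(\chi_{j,i}R_i^{(j)})=\chi_{j,i}\nabla R_i^{(j)}+R_i^{(j)}\nabla\chi_{j,i}$, I would bound $||\bfa{w}-\bfa{u}_{\tu{snap}}||_a^2$ by $C\sum_{j}\sum_{i}\big(\int_{\omega_j}\kappa_i\chi_{j,i}^2|\nabla R_i^{(j)}|^2\,\dx + \int_{\omega_j}\kappa_i|\nabla\chi_{j,i}|^2|R_i^{(j)}|^2\,\dx\big)$, and similarly bound $||\partial_t(\bfa{w}-\bfa{u}_{\tu{snap}})||_c^2$ and $||\bfa{w}(\cdot,0)-\bfa{u}_{\tu{snap}}(\cdot,0)||_c^2$ by the corresponding patch quantities, now with $\partial_tR_i^{(j)}$ (resp. $R_i^{(j)}(\cdot,0)$) in place of $R_i^{(j)}$; here the passage from the $||\cdot||_c$ norm to these quantities uses $\underline{\mathcal{C}}\le\mathcal C_{ii}\le\bar{\mathcal C}$ together with the standard equivalence on $V_h(\omega_j)$ between the $\mathcal C_{ii}$-weighted $L^2(\omega_j)$ norm and the $s_i^{(j)}$ norm of (\ref{schi}). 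The second patch quantity is precisely $s_i^{(j)}(R_i^{(j)},R_i^{(j)})$; for the first, testing the $\kappa_i$-harmonicity of $R_i^{(j)}$ against $\chi_{j,i}^2 R_i^{(j)}\in V_h(\omega_j)\cap H^1_0(\omega_j)$ and applying Cauchy--Schwarz gives $\int_{\omega_j}\kappa_i\chi_{j,i}^2|\nabla R_i^{(j)}|^2\,\dx \le 4\int_{\omega_j}\kappa_i|\nabla\chi_{j,i}|^2|R_i^{(j)}|^2\,\dx \le 4\,s_i^{(j)}(R_i^{(j)},R_i^{(j)})$, so every patch term is controlled by $s_i^{(j)}$ of a tail.

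Then I would invoke (\ref{eeunc}): normalizing the $\psi_{k,i}^{(j)}$ to be $s_i^{(j)}$-orthonormal (hence $a_i^{(j)}$-orthogonal with $a_i^{(j)}(\psi_{k,i}^{(j)},\psi_{k,i}^{(j)})=\lambda_{k,i}^{(j)}$) yields $s_i^{(j)}(R_i^{(j)},R_i^{(j)})=\sum_{k>L_j}|d_{k,i}^{(j)}|^2 \le \tfrac{1}{\lambda_{L_j+1,i}^{(j)}}\sum_{k>L_j}\lambda_{k,i}^{(j)}|d_{k,i}^{(j)}|^2 = \tfrac{1}{\lambda_{L_j+1,i}^{(j)}}a_i^{(j)}(R_i^{(j)},R_i^{(j)}) \le \tfrac{1}{\Lambda_1}a_i^{(j)}(R_i^{(j)},R_i^{(j)})$, and the same with $\partial_t R_i^{(j)}$. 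Since $R_i^{(j)}$ is the tail of the eigenexpansion of $u^{(j)}_{\tu{snap},i}$, orthogonality gives $a_i^{(j)}(R_i^{(j)},R_i^{(j)})\le a_i^{(j)}(u^{(j)}_{\tu{snap},i},u^{(j)}_{\tu{snap},i})$; and because $u^{(j)}_{\tu{snap},i}$ (resp. $\partial_t u^{(j)}_{\tu{snap},i}$) is the $\kappa_i$-harmonic extension of $u_i|_{\partial\omega_j}$ (resp. $\partial_t u_i|_{\partial\omega_j}$), its energy is controlled by $\int_{\omega_j}\kappa_i|\nabla u_i|^2\,\dx$ (resp. $\int_{\omega_j}\kappa_i|\nabla\partial_t u_i|^2\,\dx$) up to the irreducible snapshot error. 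Summing over $j$ with bounded overlap and over $i$ turns these into $C||\bfa{u}||_a^2$, $C||\partial_t\bfa{u}||_a^2$, and $C||\bfa{u}(\cdot,0)||_a^2$; integrating the first two over $(0,T)$ and collecting all three estimates gives (\ref{convmsa}). I expect the main obstacle to be the bookkeeping of the second paragraph: transferring the global $||\cdot||_a$ and $||\cdot||_c$ norms into a clean sum of patch quantities while keeping a single $1/\Lambda_1$ in front, in particular the harmonicity--Cauchy--Schwarz step that forces the $\chi_{j,i}\nabla R_i^{(j)}$ piece to be controlled by $s_i^{(j)}(R_i^{(j)},R_i^{(j)})$, since otherwise that piece would not carry the spectral-gap factor.
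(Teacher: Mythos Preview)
Your proposal is correct and follows essentially the same route as the paper. The paper packages the three terms on the left of (\ref{convmsa}) into Lemmas~\ref{partuc}, \ref{auc}, and \ref{0uc}, but the content of each matches what you outline: write the difference as $-\sum_j \chi_{j,i} R_i^{(j)}$ with $R_i^{(j)}$ the eigen-tail (the paper's $e_i^{(j)}$), split via the product rule, reduce both patch pieces to $s_i^{(j)}(R_i^{(j)},R_i^{(j)})$, apply the spectral gap from (\ref{eeunc}), and then use that $u_{\tu{snap},i}^{(j)}$ is the $\kappa_i$-harmonic extension to bound $a_i^{(j)}(R_i^{(j)},R_i^{(j)})\le a_i^{(j)}(u_i,u_i)$. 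Your Caccioppoli step---testing harmonicity of $R_i^{(j)}$ against $\chi_{j,i}^2 R_i^{(j)}$---is exactly Lemma~\ref{uncoupledchi} specialized to $f_i=0$, so you have correctly isolated the only nontrivial ingredient.

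One small remark: your passage from the $\|\cdot\|_c$ norm to $s_i^{(j)}$ via a ``standard equivalence on $V_h(\omega_j)$'' is not a finite-element norm equivalence but rather the pointwise bound $\mathcal{C}_{ii}\,\chi_{j,i}^2 \lesssim \kappa_i \sum_l |\nabla\chi_{l,i}|^2$ (which the paper also uses without comment in (\ref{partu1})); it follows from $\mathcal{C}_{ii}\le\bar{\mathcal C}$, $\kappa_i\ge\underline\kappa$, and the $O(H^{-1})$ gradient scaling of the partition of unity. Also, your caveat ``up to the irreducible snapshot error'' in the harmonic-extension step is unnecessary here, since with the full boundary-delta snapshot space (\ref{snapu}) the local snapshot $u_{\tu{snap},i}^{(j)}$ \emph{is} the $\kappa_i$-harmonic extension of $u_i|_{\partial\omega_j}$, so $a_i^{(j)}(u_{\tu{snap},i}^{(j)},u_{\tu{snap},i}^{(j)})\le a_i^{(j)}(u_i,u_i)$ holds exactly.
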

\begin{proof}
We base on \cite{mcontinua17, Tony11} for the proof of this Theorem. That is, our proof follows from Lemmas \ref{partuc}, \ref{auc} and \ref{0uc} at the end of this section.

\end{proof}

\subsection{Coupled GMsFEM}
Convergence analysis is provided for the coupled GMsFEM.   We will compare the difference between the reference weak solution $\bfa{u}$ defined in (\ref{eq:vartt}) and the multiscale solution $\bfa{u}_{\tu{ms}}$ defined in (\ref{eq:varuH}) from the coupled GMsFEM. 

We will utilize the notation from (\ref{nforms}) and (\ref{nbis}).  Assume that there is some positive constant $\overline{Q_s}$ such that $|Q_s| \leq \overline{Q_s}$.  Then, it is easy to show that
\beq
\label{eq:main8}
\bigg( 1 - \frac{2\overline{Q_s} C^2_p}{\underline{\kappa}} \bigg) a (\bfa{u},\bfa{u}) \leq a_{Q_s}(\bfa{u},\bfa{u}) 
\leq \bigg( 1 + \frac{2\overline{Q_s} C^2_p}{\underline{\kappa}} \bigg) a(\bfa{u},\bfa{u})\,,
\eeq
where $C_p(\Omega)$ is from (\ref{poincare}).  We now have the following lemma.
\begin{lemma}
\label{norm_equiv}
Assume $\bigg( 1 - \dfrac{2\overline{Q_s} C^2_p}{\underline{\kappa}} \bigg) >0$. Then, there exist constants $m_1, m_2 > 0$ such that
\beq
m_1 \, a (\bfa{u},\bfa{u}) \leq a_{Q_s}(\bfa{u},\bfa{u}) 
\leq m_2 \, a(\bfa{u},\bfa{u})\,.
\eeq
\end{lemma}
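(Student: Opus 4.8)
The statement is an immediate consequence of the two-sided bound (\ref{eq:main8}) stated just above it; the plan is only to make the constants explicit. I would set
\[
m_1 = 1 - \frac{2\overline{Q_s}\,C_p^2}{\underline{\kappa}}\,, \qquad m_2 = 1 + \frac{2\overline{Q_s}\,C_p^2}{\underline{\kappa}}\,.
\]
The hypothesis of the lemma is precisely $m_1 > 0$, and $m_2 > 0$ holds automatically because $\overline{Q_s}$, $C_p$ and $\underline{\kappa}$ are positive. Then (\ref{eq:main8}) reads exactly $m_1\, a(\bfa{u},\bfa{u}) \le a_{Q_s}(\bfa{u},\bfa{u}) \le m_2\, a(\bfa{u},\bfa{u})$ for all $\bfa{u} \in \bfa{V}$, which is the assertion.

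Since the excerpt only asserts ``it is easy to show'' for (\ref{eq:main8}), I would include its short derivation. From the definitions in (\ref{nforms}), $a_{Q_s}(\bfa{u},\bfa{u}) = a(\bfa{u},\bfa{u}) + q_s(\bfa{u},\bfa{u})$, and a one-line computation gives $q_s(\bfa{u},\bfa{u}) = \int_\Omega Q_s\,(u_1-u_2)^2\,\dx$. Hence $|q_s(\bfa{u},\bfa{u})| \le \overline{Q_s}\,\|u_1-u_2\|_{L^2(\Omega)}^2 \le 2\overline{Q_s}\bigl(\|u_1\|_{L^2(\Omega)}^2 + \|u_2\|_{L^2(\Omega)}^2\bigr)$, and the Poincar\'e inequality (\ref{poincare}) bounds the right-hand side by $\frac{2\overline{Q_s}C_p^2}{\underline{\kappa}}\,a(\bfa{u},\bfa{u})$. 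Combining this with $-|q_s| \le q_s \le |q_s|$ gives (\ref{eq:main8}), and the lemma follows.

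There is no real obstacle: the entire content sits in the sign hypothesis $m_1 > 0$, i.e.\ $2\overline{Q_s}C_p^2 < \underline{\kappa}$, which is the place where one needs $|Q_s|$ to be small relative to the diffusion coefficients. The only point to be careful about is that this hypothesis is genuinely required for the lower bound — without it the symmetric form $a_{Q_s}$ need not be coercive for $\|\cdot\|_a$, and the coupled spectral problem (\ref{eec}) used later would not be well posed — so the lemma should be stated (as it is) with that explicit assumption rather than derived unconditionally.
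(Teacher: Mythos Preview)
Your proposal is correct and matches the paper's approach: the lemma is stated without a separate proof precisely because it is an immediate restatement of (\ref{eq:main8}) with the explicit constants you identify. Your added derivation of (\ref{eq:main8}) via $q_s(\bfa{u},\bfa{u})=\int_\Omega Q_s(u_1-u_2)^2\,\dx$ and the Poincar\'e inequality (\ref{poincare}) is exactly the ``easy to show'' step the paper alludes to, so nothing is missing.
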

Throughout this section, we always assume that $\bigg( 1 - \dfrac{2\overline{Q_s} C^2_p}{\underline{\kappa}} \bigg) >0$ holds. Recall that 
$a_{Q_s}(\bfa{u},\bfa{v}) = a_{Q_s}(\bfa{v},\bfa{u})$.

\begin{lemma}
\label{bddgarding2}
Let $K$, $\alpha$ and $C_b$ be defined as in Lemma \ref{bddgarding} and its proof.
\beq
\begin{split}
&b((u_1,u_2),(v_1,v_2)) \leq C_{b} \bigg( 1 - \dfrac{2\overline{Q_s} C^2_p}{\underline{\kappa}} \bigg)^{-1}||\bfa{u}||_{a_{Q_s}} \ ||\bfa{v}||_{a_{Q_s}}, \\
&b((u_1,u_2),(u_1,u_2)) + K ||\bfa{u}||_{L^2(\Omega)}^2 \geq \alpha \bigg( 1 + \dfrac{2\overline{Q_s} C^2_p}{\underline{\kappa}} \bigg)^{-1}||\bfa{u}||_{a_{Q_s}}^2\,,
\end{split}
\eeq
for all $(u_1,u_2), \, (v_1,v_2) \in \bfa{V}$.
\end{lemma}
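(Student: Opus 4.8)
The plan is to read off both inequalities from Lemma~\ref{bddgarding} by passing through the norm equivalence~(\ref{eq:main8}) (equivalently Lemma~\ref{norm_equiv}), which under the standing hypothesis $1-2\overline{Q_s}C_p^2/\underline{\kappa}>0$ of this part of the section compares the $a$-norm with the $a_{Q_s}$-norm. Since $||\bfa{u}||_a^2 = a(\bfa{u},\bfa{u})$ and $||\bfa{u}||_{a_{Q_s}}^2 = a_{Q_s}(\bfa{u},\bfa{u})$, inequality~(\ref{eq:main8}) is exactly the statement that
\[
\Big(1-\frac{2\overline{Q_s}C_p^2}{\underline{\kappa}}\Big)||\bfa{u}||_a^2 \leq ||\bfa{u}||_{a_{Q_s}}^2 \leq \Big(1+\frac{2\overline{Q_s}C_p^2}{\underline{\kappa}}\Big)||\bfa{u}||_a^2 \,.
\]

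For the continuity estimate I would first apply~(\ref{boundb}) to get $b((u_1,u_2),(v_1,v_2)) \leq C_b\,||\bfa{u}||_a\,||\bfa{v}||_a$, and then substitute the left-hand inequality above in the form $||\bfa{u}||_a \leq (1-2\overline{Q_s}C_p^2/\underline{\kappa})^{-1/2}||\bfa{u}||_{a_{Q_s}}$ (and likewise for $\bfa{v}$); multiplying the two substitutions produces exactly the constant $C_b(1-2\overline{Q_s}C_p^2/\underline{\kappa})^{-1}$. For the G{\aa}rding-type estimate I would start from~(\ref{gardingb}), namely $b((u_1,u_2),(u_1,u_2)) + K||\bfa{u}||_{\bfa{L}^2(\Omega)}^2 \geq \alpha\,||\bfa{u}||_a^2$, and then bound the right-hand side from below using the right-hand inequality above rewritten as $||\bfa{u}||_a^2 \geq (1+2\overline{Q_s}C_p^2/\underline{\kappa})^{-1}||\bfa{u}||_{a_{Q_s}}^2$; this gives the claimed inequality with the same $K$ and with $\alpha$ replaced by $\alpha(1+2\overline{Q_s}C_p^2/\underline{\kappa})^{-1}$.

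There is essentially no obstacle here: all the real work sits in Lemma~\ref{bddgarding} and in~(\ref{eq:main8}), and the argument is a two-line consequence of them. The only things to keep straight are that the positivity condition $1-2\overline{Q_s}C_p^2/\underline{\kappa}>0$ is exactly what is needed for the inverse square root in the continuity bound to be finite (this is the standing assumption of the section), and that $||\cdot||_{\bfa{L}^2(\Omega)}$ in Lemma~\ref{bddgarding} and $||\cdot||_{L^2(\Omega)}$ in the present statement denote the same quantity $(||u_1||_{L^2(\Omega)}^2+||u_2||_{L^2(\Omega)}^2)^{1/2}$, so that no stray constant enters in that term.
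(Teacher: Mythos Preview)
Your proposal is correct and follows exactly the approach of the paper, whose proof consists of the single sentence ``The result follows from Lemma~\ref{bddgarding} and~(\ref{eq:main8}).'' You have simply spelled out the two substitutions that this sentence implies.
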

\begin{proof}
The result follows from Lemma \ref{bddgarding} and (\ref{eq:main8}).
\end{proof}

The following assumption is for later theorem.

\begin{assumption}\label{assQs}
 We assume that $\alpha \bigg( 1 + \dfrac{2\overline{Q_s} C^2_p}{\underline{\kappa}} \bigg)^{-1} > \dfrac{K \, C_p}{\sqrt{\underline{\kappa}}}$ where $K$, $\alpha$ and $C_p$ are from the proof of Lemma \ref{bddgarding}. 
\end{assumption}

\begin{theorem}
\label{unique_alphaQ}
Under Assumptions \ref{bdd} and \ref{assQs}, we have a unique solution of the problem (\ref{eq:var}) with respect to $||\cdot||_{a_{Q_s}}$.
\end{theorem}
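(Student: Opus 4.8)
The proof will run parallel to that of Theorem \ref{unique_alpha}, with the norm $||\cdot||_{a_{Q_s}}$ replacing $||\cdot||_a$, and reduces to checking the hypotheses of the Lax--Milgram theorem for $b(\cdot,\cdot)$ on the Hilbert space $(\bfa{V}, ||\cdot||_{a_{Q_s}})$. First I would record that, under the standing hypothesis $1 - 2\overline{Q_s}C_p^2/\underline{\kappa} > 0$, Lemma \ref{norm_equiv} makes $||\cdot||_{a_{Q_s}}$ equivalent to $||\cdot||_a$ and hence, through $\underline{\kappa}$ and $\bar{\kappa}$, to the standard norm on $\bfa{V} = H_0^1(\Omega) \times H_0^1(\Omega)$; thus $(\bfa{V}, ||\cdot||_{a_{Q_s}})$ is a Hilbert space, and for each $\bfa{f} = (f_1,f_2) \in \bfa{L}^2(\Omega)$ the linear functional $(v_1,v_2) \mapsto \int_\Omega f_1 v_1 \, \dx + \int_\Omega f_2 v_2 \, \dx$ is bounded on it.

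Boundedness of $b$ with respect to $||\cdot||_{a_{Q_s}}$ is exactly the first inequality of Lemma \ref{bddgarding2}. For coercivity I would reproduce the argument of Lemma \ref{coercivel}, now with Lemma \ref{bddgarding2} in place of Lemma \ref{bddgarding}: starting from the second (G{\aa}rding-type) inequality of Lemma \ref{bddgarding2},
\beq
b((u_1,u_2),(u_1,u_2)) + K \, ||\bfa{u}||_{L^2(\Omega)}^2 \geq \alpha \bigg( 1 + \dfrac{2\overline{Q_s} C_p^2}{\underline{\kappa}} \bigg)^{-1} ||\bfa{u}||_{a_{Q_s}}^2\,,
\eeq
one absorbs the zeroth-order term $K \, ||\bfa{u}||_{L^2(\Omega)}^2$ into the left-hand side by estimating it via the Poincar\'{e} inequality (\ref{poincare}) together with the norm equivalence of Lemma \ref{norm_equiv}, precisely as (\ref{K}) was turned into (\ref{KnotK0}) in the proof of Lemma \ref{coercivel}. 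This gives $b((u_1,u_2),(u_1,u_2)) \geq C_c' \, ||\bfa{u}||_{a_{Q_s}}^2$ for a constant $C_c'$ whose positivity is guaranteed exactly by Assumption \ref{assQs}, which is the inequality $\alpha ( 1 + 2\overline{Q_s} C_p^2 / \underline{\kappa})^{-1} > K C_p / \sqrt{\underline{\kappa}}$ arising when one matches the coefficient of $||\bfa{u}||_{a_{Q_s}}^2$.

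With boundedness and coercivity of $b$ on $(\bfa{V}, ||\cdot||_{a_{Q_s}})$ in hand, the Lax--Milgram theorem produces a unique $\bfa{u} \in \bfa{V}$ solving (\ref{eq:var}), and coercivity additionally yields the a priori bound $||\bfa{u}||_{a_{Q_s}} \leq C \, ||\bfa{f}||_{\bfa{L}^2(\Omega)}$. The only step that needs real care is the coercivity estimate: one must track the constants from the Poincar\'{e} bound and from the $a$-versus-$a_{Q_s}$ comparison so that the resulting lower-bound constant coincides with the quantity that Assumption \ref{assQs} renders positive. Everything else is a routine transcription of the arguments already carried out for Lemma \ref{coercivel} and Theorem \ref{unique_alpha}.
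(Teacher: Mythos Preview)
Your proposal is correct and matches the paper's own proof, which is the one-line remark that the result follows from Lemma \ref{bddgarding2}, the Poincar\'{e} inequality, and the Lax--Milgram Theorem. You have simply spelled out in detail the coercivity step (absorbing $K\,||\bfa{u}||_{L^2(\Omega)}^2$ via Poincar\'{e} and the norm equivalence of Lemma \ref{norm_equiv}, exactly as in Lemma \ref{coercivel}) that the paper leaves implicit.
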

\begin{proof}
The result follows from Lemma \ref{bddgarding2}, the Poincar\'{e} inequality and the Lax-Milgram Theorem.
\end{proof}
Under Assumptions \ref{bdd} and \ref{assQs}, the following assumptions are satisfied.
\begin{assumption}
\label{bdd_coer_aq}
There exists constants $D_1$, $D_2$ $>0$ such that
\beq
\begin{split}
&b((u_1,u_2),(v_1,v_2)) \leq D_1 ||\bfa{u}||_{a_{Q_s}} \ ||\bfa{v}||_{a_{Q_s}}, \\
&b((v_1,v_2),(v_1,v_2))  \geq  D_2 ||\bfa{v}||_{a_{Q_s}}^2\,,
\end{split}
\eeq
for all $\bfa{u} = (u_1,u_2), \, \bfa{v}=(v_1,v_2) \in \bfa{V}$.
\end{assumption}
\begin{lemma}
\label{ceamsaq}
Under Assumption \ref{bdd_coer_aq},
in the coupled GMsFEM, for $\bfa{u}$ and $\bfa{u}_{\tu{ms}}$ respectively defined in (\ref{eq:vartt}) and (\ref{eq:varuH}), we have the following result:
\beq
\bsp
& \norm{\bfa{u}(\cdot,T)-\bfa{u}_{\tu{ms}}(\cdot,T)}_c^2+ \int_0^T \norm{\bfa{u}-\bfa{u}_{\tu{ms}}}_{a_{Q_s}}^2 \, \dt\\
 & \leq  {C }\inf\limits_{\bfa{w} \in \bfa{V}_{\tu{ms}}} \left( \int_0^T  \left\| \frac{\partial(\bfa{w}-\bfa{u})}{\partial t}\right\|_{c}^2 \, \dt 
+   \int_0^T  || \bfa{w}-\bfa{u}||_{a_{Q_s}}^2 \, \dt + ||\bfa{w}(\cdot,0) - \bfa{u}(\cdot,0) ||_{c}^2\right).
\end{split}
\eeq
\end{lemma}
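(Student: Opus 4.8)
The estimate has the same structure as Lemma \ref{ceafem1} (restated for the uncoupled GMsFEM in Lemma \ref{ceamsa}), so the plan is to transcribe that argument after the substitutions $\bfa{V}_h \to \bfa{V}_{\tu{ms}}$, $\bfa{u}_h \to \bfa{u}_{\tu{ms}}$, and -- crucially -- $\|\cdot\|_a \to \|\cdot\|_{a_{Q_s}}$, with the boundedness/coercivity constants $C_1, C_2$ of Assumption \ref{assumeb} replaced by $D_1, D_2$ of Assumption \ref{bdd_coer_aq}. First I would rewrite (\ref{eq:vartt}) and the coupled GMsFEM system (\ref{eq:varuH}) through the bilinear forms $c$ and $b$ of (\ref{nforms}) and subtract them, obtaining the Galerkin orthogonality $c(\partial_t(\bfa{u}-\bfa{u}_{\tu{ms}}),\bfa{v}) + b(\bfa{u}-\bfa{u}_{\tu{ms}},\bfa{v}) = 0$ for all $\bfa{v}\in\bfa{V}_{\tu{ms}}$, the analogue of (\ref{eq:varuHu}).

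Next I would fix an arbitrary $\bfa{w}\in\bfa{V}_{\tu{ms}}$ and test with $\bfa{v}=\bfa{w}-\bfa{u}_{\tu{ms}}\in\bfa{V}_{\tu{ms}}$. Using the coercivity $D_2\|\bfa{w}-\bfa{u}_{\tu{ms}}\|_{a_{Q_s}}^2 \le b(\bfa{w}-\bfa{u}_{\tu{ms}},\bfa{w}-\bfa{u}_{\tu{ms}})$ from Assumption \ref{bdd_coer_aq}, combined with $\tfrac12\tfrac{d}{dt}\|\bfa{w}-\bfa{u}_{\tu{ms}}\|_c^2 = c(\partial_t(\bfa{w}-\bfa{u}_{\tu{ms}}),\bfa{w}-\bfa{u}_{\tu{ms}})$, the splitting $\bfa{w}-\bfa{u}_{\tu{ms}}=(\bfa{w}-\bfa{u})+(\bfa{u}-\bfa{u}_{\tu{ms}})$, and the orthogonality relation to kill the $\bfa{u}-\bfa{u}_{\tu{ms}}$ contribution, this reduces to estimating the $c$-product and the $b$-form against $\bfa{w}-\bfa{u}$. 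Cauchy--Schwarz on the $c$-term, boundedness of $b$ (constant $D_1$) on the $b$-term, and Young's inequality then let me absorb the $\|\bfa{w}-\bfa{u}_{\tu{ms}}\|_{a_{Q_s}}$ factors into the left side, leaving a differential inequality $\tfrac12\tfrac{d}{dt}\|\bfa{w}-\bfa{u}_{\tu{ms}}\|_c^2 - \tfrac12\|\bfa{w}-\bfa{u}_{\tu{ms}}\|_c^2 + \tfrac{D_2}{4}\|\bfa{w}-\bfa{u}_{\tu{ms}}\|_{a_{Q_s}}^2 \le \tfrac12\|\partial_t(\bfa{w}-\bfa{u})\|_c^2 + \tfrac{D_1^2}{3D_2}\|\bfa{w}-\bfa{u}\|_{a_{Q_s}}^2$, exactly as in (\ref{csi})--(\ref{1ode}).

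I would then multiply by the integrating factor $e^{-t}$, integrate over $(0,T)$, and drop the exponentials via $e^{-T}\le e^{-t}\le 1$ at the price of a constant depending on $T,D_1,D_2$. Choosing the discrete initial datum $\bfa{u}_{\tu{ms}}(\cdot,0)$ as the $c$-orthogonal projection of $\bfa{u}(\cdot,0)$ onto $\bfa{V}_{\tu{ms}}$ makes $\|\bfa{u}(\cdot,0)-\bfa{u}_{\tu{ms}}(\cdot,0)\|_c=0$, so $\|\bfa{w}(\cdot,0)-\bfa{u}_{\tu{ms}}(\cdot,0)\|_c \le \|\bfa{w}(\cdot,0)-\bfa{u}(\cdot,0)\|_c$ by the triangle inequality; one further triangle inequality passes from $\bfa{w}-\bfa{u}_{\tu{ms}}$ to $\bfa{u}-\bfa{u}_{\tu{ms}}$ on the left, and the bound $\|\bfa{w}(\cdot,T)-\bfa{u}(\cdot,T)\|_c^2 \le 2T\int_0^T\|\partial_t(\bfa{w}-\bfa{u})\|_c^2\,\dt + 2\|\bfa{w}(\cdot,0)-\bfa{u}(\cdot,0)\|_c^2$ coming from (\ref{normi}) handles the remaining end-time $c$-term. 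Taking the infimum over $\bfa{w}\in\bfa{V}_{\tu{ms}}$ gives the stated estimate.

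I do not expect a real obstacle, since the argument is essentially a transcription of the proof of Lemma \ref{ceafem1}. The genuinely new point is that the error is measured in $\|\cdot\|_{a_{Q_s}}$ instead of $\|\cdot\|_a$; this is exactly the norm for which $b$ is simultaneously bounded and coercive, by Assumption \ref{bdd_coer_aq} (which itself rests on the positivity of $1-2\overline{Q_s}C_p^2/\underline{\kappa}$ together with Assumptions \ref{bdd} and \ref{assQs}), and it is the natural choice here because the coupled multiscale basis is generated by the spectral problem (\ref{eec}) built on $a_{Q_s}^{(j)}$. Keeping the constants straight through the substitution $\|\cdot\|_a \mapsto \|\cdot\|_{a_{Q_s}}$ is the only step that needs a bit of care.
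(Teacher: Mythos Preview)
Your proposal is correct and is exactly the approach the paper takes: the paper's own proof of Lemma \ref{ceamsaq} simply reads ``The proof is similar to that of Lemma \ref{ceafem1},'' and your outline is precisely that transcription, with $\bfa{V}_h, \bfa{u}_h, \|\cdot\|_a, C_1, C_2$ replaced by $\bfa{V}_{\tu{ms}}, \bfa{u}_{\tu{ms}}, \|\cdot\|_{a_{Q_s}}, D_1, D_2$ via Assumption \ref{bdd_coer_aq}.
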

\begin{proof}
The proof is similar to that of Lemma \ref{ceafem1}.
\end{proof}
We hence obtain the following convergence result, under weaker condition on the bilinear form $b$.
\begin{lemma}
\label{ceamsaq2}
Assume that there exist positive constants $\overline{Q_a}$, $D_1$ and $D_2$ such that $|Q_a| \leq \overline{Q_a}$ and
\beq\label{ceamsaq2e}
\begin{split}
D_2 ||\bfa{v}||_{a_{Q_s}}^2 \leq b((v_1,v_2),(v_1,v_2))  \leq  D_1 ||\bfa{v}||_{a_{Q_s}}^2\,,
\end{split}
\eeq
for all $\bfa{v} = (v_1,v_2) \in \bfa{V}$.  For $\bfa{u}$ and $\bfa{u}_{\tu{ms}}$ respectively defined in (\ref{eq:vartt}) and (\ref{eq:varuH}) from the coupled GMsFEM, the following result holds:
\beq
\bsp
 & ||\bfa{u} (\cdot,T)-\bfa{u}_{\tu{ms}}(\cdot,T)||_c^2 
+\int_0^T \norm{\bfa{u}-\bfa{u}_{\tu{ms}}}_{a_{Q_s}}^2 \, \dt\\
& \leq
 C \inf\limits_{w \in \bfa{V}_{\tu{ms}}} \bigg(\int_0^T \norm{\frac{\partial (\bfa{w} -\bfa{u})}{\partial t}}_c^2 \, \dt 
 + \int_0^T \norm{\bfa{w}-\bfa{u}}_{a_{Q_s}}^2 \dt \\
&+ \bar{b}  \int_0^T ||\nabla \bfa{w} - \nabla \bfa{u}||_{\bfa{L}^2(\Omega)}^2
 \, \dt\\
& +\overline{Q_a} \int_0^T || \bfa{w} - \bfa{ u} ||_{\bfa{L}^2(\Omega)}^2 \, \dt
+ ||\bfa{w} (\cdot,0)-\bfa{u}_{\tu{ms}}(\cdot,0)||_c^2 \bigg)\,,
\end{split}
\eeq
where $\bar{b}$ is from Assumption \ref{bdd}. 
\end{lemma}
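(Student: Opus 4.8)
The plan is to run the energy/Gr\"onwall argument of Lemma~\ref{ceafem1} (as in \cite{mcontinua17, Tony11}), but, instead of absorbing all of $b$ into an energy norm, to use the splitting $b = a_{Q_s} + \beta + q_a$ from (\ref{nforms}) and keep the nonsymmetric convection form $\beta$ and the antisymmetric reaction form $q_a$ on the right-hand side. That choice is exactly what produces the two extra contributions $\bar b\int_0^T \|\nabla\bfa{w}-\nabla\bfa{u}\|_{\bfa{L}^2(\Omega)}^2\,\dt$ and $\overline{Q_a}\int_0^T \|\bfa{w}-\bfa{u}\|_{\bfa{L}^2(\Omega)}^2\,\dt$ in the statement. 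Throughout, the standing hypothesis $\bigl(1-2\overline{Q_s}C_p^2/\underline\kappa\bigr)>0$ is in force, so Lemma~\ref{norm_equiv} makes $\|\cdot\|_{a_{Q_s}}$ and $\|\cdot\|_{a}$ equivalent, and together with $\kappa_i\ge\underline\kappa$ and the Poincar\'e inequality (\ref{poincare}) this lets me control the $L^2$-norm and the gradient-$L^2$-norm by $\|\cdot\|_{a_{Q_s}}$.

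First I subtract (\ref{eq:varuH}) from (\ref{eq:vartt}) to obtain the Galerkin orthogonality $c\bigl(\partial_t(\bfa{u}-\bfa{u}_{\tu{ms}}),\bfa{v}\bigr)+b(\bfa{u}-\bfa{u}_{\tu{ms}},\bfa{v})=0$ for all $\bfa{v}\in\bfa{V}_{\tu{ms}}$. For an arbitrary $\bfa{w}\in\bfa{V}_{\tu{ms}}$ I take $\bfa{v}=\bfa{w}-\bfa{u}_{\tu{ms}}$, write $\bfa{w}-\bfa{u}_{\tu{ms}}=(\bfa{w}-\bfa{u})+(\bfa{u}-\bfa{u}_{\tu{ms}})$, apply the lower bound of (\ref{ceamsaq2e}) to the diagonal term and the splitting $b=a_{Q_s}+\beta+q_a$ to the cross term, which yields
\[
\frac12\frac{d}{dt}\|\bfa{w}-\bfa{u}_{\tu{ms}}\|_c^2 + D_2\|\bfa{w}-\bfa{u}_{\tu{ms}}\|_{a_{Q_s}}^2 \le c\!\Bigl(\tfrac{\partial(\bfa{w}-\bfa{u})}{\partial t},\bfa{w}-\bfa{u}_{\tu{ms}}\Bigr) + a_{Q_s}(\bfa{w}-\bfa{u},\bfa{w}-\bfa{u}_{\tu{ms}}) + \beta(\bfa{w}-\bfa{u},\bfa{w}-\bfa{u}_{\tu{ms}}) + q_a(\bfa{w}-\bfa{u},\bfa{w}-\bfa{u}_{\tu{ms}}).
\]
The $c$-term is bounded by Cauchy--Schwarz in $\|\cdot\|_c$, the $a_{Q_s}$-term by Cauchy--Schwarz in $\|\cdot\|_{a_{Q_s}}$, and, using $|\bfa{b}_i|\le\bar b$ and $|Q_a|\le\overline{Q_a}$, the convection term by $\bar b\,\|\nabla(\bfa{w}-\bfa{u})\|_{\bfa{L}^2(\Omega)}\,\|\bfa{w}-\bfa{u}_{\tu{ms}}\|_{\bfa{L}^2(\Omega)}$ and the $q_a$-term by $\overline{Q_a}\,\|\bfa{w}-\bfa{u}\|_{\bfa{L}^2(\Omega)}\,\|\bfa{w}-\bfa{u}_{\tu{ms}}\|_{\bfa{L}^2(\Omega)}$.

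Then Young's inequality splits every product. On the test side each factor $\|\bfa{w}-\bfa{u}_{\tu{ms}}\|_{a_{Q_s}}$, $\|\nabla(\bfa{w}-\bfa{u}_{\tu{ms}})\|_{\bfa{L}^2(\Omega)}$, $\|\bfa{w}-\bfa{u}_{\tu{ms}}\|_{\bfa{L}^2(\Omega)}$ is dominated by $\|\bfa{w}-\bfa{u}_{\tu{ms}}\|_{a_{Q_s}}$ via Lemma~\ref{norm_equiv}, $\kappa_i\ge\underline\kappa$ and (\ref{poincare}), so choosing the Young parameters small absorbs these into $D_2\|\bfa{w}-\bfa{u}_{\tu{ms}}\|_{a_{Q_s}}^2$ and only a $\tfrac12\|\bfa{w}-\bfa{u}_{\tu{ms}}\|_c^2$ term survives from the $c$-product. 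Multiplying the resulting differential inequality by the integrating factor $e^{-t}$, integrating over $(0,T)$ and using $e^{-T}\le e^{-t}\le1$ exactly as in (\ref{mif})--(\ref{int0}) gives the estimate with $\bfa{w}-\bfa{u}_{\tu{ms}}$ in place of $\bfa{u}-\bfa{u}_{\tu{ms}}$, plus $\|\bfa{w}(\cdot,0)-\bfa{u}_{\tu{ms}}(\cdot,0)\|_c^2$. Defining $\bfa{u}_{\tu{ms}}(\cdot,0)$ by the $c$-projection $c(\bfa{u}(\cdot,0),\bfa{v})=c(\bfa{u}_{\tu{ms}}(\cdot,0),\bfa{v})$ for all $\bfa{v}\in\bfa{V}_{\tu{ms}}$ yields $\|\bfa{w}(\cdot,0)-\bfa{u}_{\tu{ms}}(\cdot,0)\|_c\le\|\bfa{w}(\cdot,0)-\bfa{u}(\cdot,0)\|_c$ as in (\ref{proj}); a triangle inequality to pass from $\bfa{w}-\bfa{u}_{\tu{ms}}$ to $\bfa{u}-\bfa{u}_{\tu{ms}}$, together with $\|\bfa{w}(\cdot,T)-\bfa{u}(\cdot,T)\|_c^2\le 2T\int_0^T\|\partial_t(\bfa{w}-\bfa{u})\|_c^2\,\dt+2\|\bfa{w}(\cdot,0)-\bfa{u}(\cdot,0)\|_c^2$ obtained from the argument of (\ref{normi})--(\ref{zdef}), produces the asserted inequality.

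The main obstacle is the absorption step: since $\beta$ and $q_a$ are not controlled by $\|\cdot\|_{a_{Q_s}}$, they must be estimated through the $L^2$- and gradient-$L^2$-norms, after which the test-side copies $\|\bfa{w}-\bfa{u}_{\tu{ms}}\|_{\bfa{L}^2(\Omega)}$ and $\|\nabla(\bfa{w}-\bfa{u}_{\tu{ms}})\|_{\bfa{L}^2(\Omega)}$ have to be swallowed by the single coercive term $D_2\|\bfa{w}-\bfa{u}_{\tu{ms}}\|_{a_{Q_s}}^2$ without inflating the coefficients of $\|\nabla(\bfa{w}-\bfa{u})\|_{\bfa{L}^2(\Omega)}^2$ and $\|\bfa{w}-\bfa{u}\|_{\bfa{L}^2(\Omega)}^2$ beyond $\bar b$ and $\overline{Q_a}$ (up to the universal constant $C$). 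This is precisely where the hypothesis $\bigl(1-2\overline{Q_s}C_p^2/\underline\kappa\bigr)>0$ (hence the norm equivalence of Lemma~\ref{norm_equiv}) and Assumption~\ref{bdd} ($\bar b/\sqrt{\underline\kappa}<1$, $|\bfa{b}_s|\gg|\bfa{b}_a|$, $|Q_s|\gg|Q_a|$) are used.
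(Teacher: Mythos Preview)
Your proposal is correct and follows essentially the same route as the paper: Galerkin orthogonality, the splitting $b=a_{Q_s}+\beta+q_a$, Young's inequality on $\beta$ and $q_a$ to isolate the $\bar b\,\|\nabla(\bfa{w}-\bfa{u})\|_{\bfa{L}^2}^2$ and $\overline{Q_a}\,\|\bfa{w}-\bfa{u}\|_{\bfa{L}^2}^2$ terms, absorption of the test-side $L^2$-norms into $D_2\|\bfa{w}-\bfa{u}_{\tu{ms}}\|_{a_{Q_s}}^2$ via Poincar\'e and Lemma~\ref{norm_equiv}, and then the integrating-factor/Gr\"onwall step from Lemma~\ref{ceafem1}. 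One small remark: the lemma as stated keeps $\|\bfa{w}(\cdot,0)-\bfa{u}_{\tu{ms}}(\cdot,0)\|_c^2$ on the right-hand side, so you do not actually need the $c$-projection step (\ref{proj}) to finish; what you wrote yields a slightly sharper bound than required.
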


Note that the constant $C$ in this Lemma can be different from the one in Lemma \ref{ceamsaq}.

\begin{proof}
Recall that for all $\bfa{v} = (v_1,v_2) \in \bfa{V}_{\tu{ms}}$, from (\ref{eq:vartt}) and (\ref{eq:varuH}), we have
\beq\label{eq:varums1}
\begin{split}
 c\left(\frac{\partial (\bfa{u}-\bfa{u}_{\tu{ms}})}{\partial t},\bfa{v}\right) + b(\bfa{u}-\bfa{u}_{\tu{ms}},\bfa{v}) = 0\,.
\end{split}
\eeq

Given $\bfa{w} \in \bfa{V}_{\tu{ms}}$, we let $\bfa{v} = \bfa{w} - \bfa{u}_{\tu{ms}} \in \bfa{V}_{\tu{ms}}$.
Using notation from (\ref{nforms}), and Young's inequality, we note that
\begin{align}\label{betain}
\begin{split}
 \beta(\bfa{w} - \bfa{u}, \bfa{w} - \bfa{u}_{\tu{ms}}) & = \beta((w_1 - u_1,w_2 - u_2), (w_1 - u_{\tu{ms},1}, w_2 - u_{\tu{ms},2})) \\
 &= \int_{\Omega} \bfa{b}_1 \cdot \nabla((w_1 - u_1) - (w_2 - u_2))(w_1 - u_{\tu{ms},1}) \, \dx \\
 & \; + \int_{\Omega} \bfa{b}_2 \cdot \nabla((w_2 - u_2) - (w_1 - u_1))(w_2 - u_{\tu{ms},2}) \, \dx\\
 & \leq \frac{1}{c_1} \bar{b} ||\nabla \bfa{w} - \nabla \bfa{u} ||_{\bfa{L}^2(\Omega)}^2 +\frac{c_1}{2} ||\bfa{w} - \bfa{ u_{\tu{ms}}} ||_{\bfa{L}^2(\Omega)}^2\,,
 \end{split}
\end{align}
for some $c_1 >0\,.$
Also,
\begin{align}\label{qain}
 \begin{split}
 q_a(\bfa{w} - \bfa{u}, \bfa{w} - \bfa{u}_{\tu{ms}}) 
  \leq \frac{1}{d_1}  \overline{Q_a} \; || \bfa{w} -  \bfa{u} ||_{\bfa{L}^2(\Omega)}^2+
   \frac{d_1}{2} ||\bfa{w} - \bfa{u}_{\tu{ms}} ||_{\bfa{L}^2(\Omega)}^2\,,
   \end{split}
\end{align}
for some $d_1 >0\,.$  Hence, for $D_2$ from (\ref{ceamsaq2e}), utilizing (\ref{eq:varums1}), we obtain
\beq
\label{eq:varuHu4}
\begin{split}
&\frac{1}{2} \, \frac{d}{dt} ||\bfa{w} - \bfa{u}_{\tu{ms}} ||_c^2 + D_2 || \bfa{w} - \bfa{u}_{\tu{ms}} ||^2_{a_{Q_s}}\\
 & = c\left( \frac{\partial(\bfa{w} - \bfa{u}_{\tu{ms}})}{\partial t}, \bfa{w} - \bfa{u}_{\tu{ms}}\right) +D_2 || \bfa{w} - \bfa{u}_{\tu{ms}} ||_{a_{Q_s}}\\
& \leq c\left( \frac{\partial(\bfa{w} - \bfa{u}_{\tu{ms}})}{\partial t}, \bfa{w} - \bfa{u}_{\tu{ms}}\right) + b(\bfa{w} - \bfa{u}_{\tu{ms}},\bfa{w} - \bfa{u}_{\tu{ms}})\\
&= c\bigg(\frac{\partial (\bfa{w} -\bfa{u}_{\tu{ms}})}{\partial t},\bfa{w} - \bfa{u}_{\tu{ms}}\bigg) 
+a(\bfa{w}-\bfa{u}_{\tu{ms}} , \bfa{w} - \bfa{u}_{\tu{ms}} )
+\beta(\bfa{w}-\bfa{u}_{\tu{ms}},\bfa{w}-\bfa{u}_{\tu{ms}})\\
&\;+ q(\bfa{w}-\bfa{u}_{\tu{ms}},\bfa{w}-\bfa{u}_{\tu{ms}})\\
&=c\bigg(\frac{\partial (\bfa{w} -\bfa{u})}{\partial t},\bfa{w} - \bfa{u}_{\tu{ms}}\bigg) 
+a_{Q_s}(\bfa{w}-\bfa{u} , \bfa{w} - \bfa{u}_{\tu{ms}} )
+\beta(\bfa{w}-\bfa{u},\bfa{w}-\bfa{u}_{\tu{ms}})\\
&\;+ q_a(\bfa{w}-\bfa{u},\bfa{w}-\bfa{u}_{\tu{ms}})\\
 & \leq \bigg | c\bigg(\frac{\partial (\bfa{w} -\bfa{u})}{\partial t},\bfa{w} - \bfa{u}_{\tu{ms}}\bigg) \bigg | +\norm{\bfa{w}-\bfa{u}}_{a_{Q_s}}\norm{\bfa{w}-\bfa{u}_{\tu{ms}}}_{a_{Q_s}}
+\beta(\bfa{w}-\bfa{u},\bfa{w}-\bfa{u}_{\tu{ms}})\\
&\;+q_a(\bfa{w}-\bfa{u},\bfa{w}-\bfa{u}_{\tu{ms}})\\
& \leq \left \| \frac{\partial (\bfa{w} -\bfa{u})}{\partial t} \right \|_c \, \|\bfa{w} - \bfa{u}_{\tu{ms}}\|_c +\norm{\bfa{w}-\bfa{u}}_{a_{Q_s}}\norm{\bfa{w}-\bfa{u}_{\tu{ms}}}_{a_{Q_s}}\\
& \; +\frac{1}{c_1} \bar{b} ||\nabla \bfa{w} - \nabla \bfa{u} ||_{\bfa{L}^2(\Omega)}^2 +\frac{c_1}{2} ||\bfa{w} - \bfa{ u_{\tu{ms}}} ||_{\bfa{L}^2(\Omega)}^2 \\
 & \; +\frac{1}{d_1}  \overline{Q_a} \; || \bfa{w} -  \bfa{u} ||_{\bfa{L}^2(\Omega)}^2+
   \frac{d_1}{2} ||\bfa{w} - \bfa{u}_{\tu{ms}} ||_{\bfa{L}^2(\Omega)}^2\,,
\end{split}
\eeq
where the last inequality follows from (\ref{betain}) and (\ref{qain}).


From the Poincar\'{e} inequality (\ref{poincare}), there exists $C_p, D >0$ such that $ \|\bfa{z}\|_{\bfa{L}^2(\Omega)}^2 \leq C_p^2\|\nabla \bfa{z}\|_{\bfa{L}^2(\Omega)}^2 \leq D \|\bfa{z}\|_{a_{Q_s}}^2\,, \forall \bfa{z} \in \bfa{V} \subset \bfa{L}^2(\Omega)$.  Thus, in the last inequality of (\ref{eq:varuHu4}),
\[\frac{c_1 + d_1}{2} ||\bfa{w} - \bfa{u}_{\tu{ms}} ||_{\bfa{L}^2(\Omega)}^2 \leq \frac{D(c_1 + d_1)}{2}\|\bfa{w} - \bfa{u}_{\tu{ms}}\|_{a_{Q_s}}^2\,.\]

We define the initial value $\bfa{u}_{\tu{ms}}(\cdot, 0)$ such that
$c(\bfa{u}(\cdot,0),\bfa{v}) = c(\bfa{u}_{\tu{ms}}(\cdot,0),\bfa{v})$, so $||\bfa{u} (\cdot,0)-\bfa{u}_{\tu{ms}}(\cdot,0)||_c = 0$ for all $\bfa{v} \in  \bfa{V}$.  Then, the rest of the proof is similar to that of Lemma \ref{ceafem1}.
\end{proof}

 
\begin{lemma}
\label{coupledchi}
For the coupled GMsFEM, if $\bfa{u}$ from (\ref{eq:vartt}) satisfies
\beq
\int_{\omega_j} \kappa_1 \nabla u_1 \cdot \nabla v_1 \, \dx
+\int_{\omega_j} \kappa_2 \nabla u_2 \cdot \nabla v_2 \, \dx
+q_s(\bfa{u},\bfa{v}) = \int_{\omega_j} f_1 v_1 \, \dx + \int_{\omega_j} f_2 v_2 \, \dx\,,
\eeq
for all $\bfa{v} \in \bfa{V}(\omega_j)$, we have
\beq
\bsp
& \int_{\omega_j} \kappa_1 \chi_j^2 |\nabla u_1|^2 \, \dx
+\int_{\omega_j} \kappa_2 \chi_j^2 |\nabla u_2|^2 \, \dx
+q_s((u_1,u_2),(\chi_{j,1}^2 \, u_1, \chi_{j,2}^2 \, u_2))\\
& \leq 
C \sum_{i=1}^2 \bigg(\int_{\omega_j} \frac{\chi_j^4}{\kappa_i |\nabla \chi_j|^2} f_i^2\, \dx  
+\int_{\omega_j} \kappa_i |\nabla \chi_j|^2  u_i^2 \, \dx \bigg).
\end{split}
\eeq
\end{lemma}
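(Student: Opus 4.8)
The plan is to repeat, now with the reaction term $q_s$ present, the test-function argument used for Lemma~\ref{uncoupledchi} (which in turn follows \cite{mcontinua17}). First I would check that $\bfa{v}=(\chi_{j,1}^2 u_1,\chi_{j,2}^2 u_2)$ is admissible: since $\chi_{j,i}=0$ on $\partial\omega_j$ by (\ref{chiu}) and $0\le\chi_{j,i}\le 1$, the product $\chi_{j,i}^2 u_i$ lies in $H^1_0(\omega_j)=V(\omega_j)$, so $\bfa{v}\in\bfa{V}(\omega_j)$. Inserting this $\bfa{v}$ into the hypothesis and using $\nabla(\chi_{j,i}^2 u_i)=\chi_{j,i}^2\nabla u_i+2\chi_{j,i}u_i\nabla\chi_{j,i}$ in the two diffusion integrals, I would isolate on the left exactly the quantity to be estimated:
\begin{align*}
&\sum_{i=1}^2\int_{\omega_j}\kappa_i\chi_{j,i}^2|\nabla u_i|^2\,\dx+q_s\big((u_1,u_2),(\chi_{j,1}^2 u_1,\chi_{j,2}^2 u_2)\big)\\
&\qquad=\sum_{i=1}^2\int_{\omega_j}f_i\chi_{j,i}^2 u_i\,\dx-2\sum_{i=1}^2\int_{\omega_j}\kappa_i\chi_{j,i}u_i\,\nabla u_i\cdot\nabla\chi_{j,i}\,\dx.
\end{align*}

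Next I would bound the two terms on the right. For the source term I would factor $f_i\chi_{j,i}^2 u_i=\bigl(\chi_{j,i}^2 f_i/(\sqrt{\kappa_i}\,|\nabla\chi_{j,i}|)\bigr)\bigl(\sqrt{\kappa_i}\,|\nabla\chi_{j,i}|\,u_i\bigr)$ and apply Cauchy--Schwarz followed by Young's inequality with a parameter $\e$, producing $\tfrac{1}{2\e}\int_{\omega_j}\chi_{j,i}^4 f_i^2/(\kappa_i|\nabla\chi_{j,i}|^2)\,\dx+\tfrac{\e}{2}\int_{\omega_j}\kappa_i|\nabla\chi_{j,i}|^2 u_i^2\,\dx$; for the cross term I would write $\kappa_i\chi_{j,i}u_i\nabla u_i\cdot\nabla\chi_{j,i}=(\sqrt{\kappa_i}\,\chi_{j,i}\nabla u_i)\cdot(\sqrt{\kappa_i}\,u_i\nabla\chi_{j,i})$ and use Young's inequality again to obtain $\e\int_{\omega_j}\kappa_i\chi_{j,i}^2|\nabla u_i|^2\,\dx+\tfrac{1}{\e}\int_{\omega_j}\kappa_i u_i^2|\nabla\chi_{j,i}|^2\,\dx$. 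Taking $\e=\tfrac{1}{2}$ as in Lemma~\ref{uncoupledchi}, the resulting $\e\sum_i\int_{\omega_j}\kappa_i\chi_{j,i}^2|\nabla u_i|^2$ is absorbed into the left side, leaving
\begin{align*}
&\tfrac{1}{2}\sum_{i=1}^2\int_{\omega_j}\kappa_i\chi_{j,i}^2|\nabla u_i|^2\,\dx+q_s\big((u_1,u_2),(\chi_{j,1}^2 u_1,\chi_{j,2}^2 u_2)\big)\\
&\qquad\le C\sum_{i=1}^2\Bigl(\int_{\omega_j}\frac{\chi_{j,i}^4}{\kappa_i|\nabla\chi_{j,i}|^2}f_i^2\,\dx+\int_{\omega_j}\kappa_i|\nabla\chi_{j,i}|^2 u_i^2\,\dx\Bigr),
\end{align*}
which, after relabelling $C$, is the asserted estimate.

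The one point that differs from Lemma~\ref{uncoupledchi}, and the step I expect to need the most care, is the bookkeeping of $q_s\big((u_1,u_2),(\chi_{j,1}^2 u_1,\chi_{j,2}^2 u_2)\big)$, which must be carried along on the left rather than discarded, so that clearing the factor $\tfrac{1}{2}$ off the gradient term would otherwise turn it into $2q_s$. To recover the stated form one uses either its sign — when $\chi_{j,1}=\chi_{j,2}=\chi_j$ it equals $\int_{\omega_j}Q_s\chi_j^2(u_1-u_2)^2\ge 0$ under the natural assumption $Q_s\ge 0$, so from $\tfrac{1}{2}\sum_i\int_{\omega_j}\kappa_i\chi_j^2|\nabla u_i|^2\le C(\cdots)$ and $q_s\le C(\cdots)$ one gets the full left side bounded by $3C(\cdots)$ — or, in general, the bound $|Q_s|\le\overline{Q_s}$, which lets the residual be absorbed into $C$; this is precisely how the interaction/transfer terms are treated in \cite{mcontinua17}. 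Apart from that, the argument is the same Young-inequality bookkeeping already carried out for Lemma~\ref{uncoupledchi}.
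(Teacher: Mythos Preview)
Your proposal is correct and follows exactly the route the paper intends: the paper's own proof is the single sentence ``The proof of this Lemma readily follows from that of Lemma~\ref{uncoupledchi} and thanks to \cite{mcontinua17},'' and you have simply written out that deferred argument, testing with $\bfa{v}=(\chi_{j,1}^2 u_1,\chi_{j,2}^2 u_2)$, expanding the gradient, and absorbing via Young's inequality with $\e=\tfrac12$ while carrying $q_s$ along. Your discussion of how to reconcile the factor $\tfrac12$ on the gradient term with the coefficient $1$ on $q_s$ (using nonnegativity of $q_s$ when $Q_s\ge 0$, or the bound $|Q_s|\le\overline{Q_s}$) is more care than the paper itself takes.
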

\begin{proof}
The proof of this Lemma readily follows from that of Lemma \ref{uncoupledchi} and thanks to \cite{mcontinua17}.
\end{proof}

\begin{theorem}\label{mainc2}
Let $\bfa{u}$ be the solution of (\ref{eq:vartt}), $\bfa{u}_{\tu{snap}}$ and $\bfa{w}$ be defined in (\ref{usnapc}) and (\ref{wc}), respectively.  Then, we have the following estimate:
\beq\label{convmsaq}
\bsp
&  \int_0^T  \left \|\frac{\partial ( \bfa{w}-\bfa{u}_{\tu{snap}})}{\partial t}\right \|_{c}^2 \, \dt 
  \int_0^T  || \bfa{w}-\bfa{u}_{\tu{snap}}||_{a_{Q_s}}^2 \, \dt + ||\bfa{w}(\cdot,0) - \bfa{u}_{\tu{snap}}(\cdot, 0)||_{c}^2 \\
& \leq \frac{C}{\Lambda_2}\left( \int_0^T \left \|\frac{\partial \bfa{u}}{\partial t}\right \|^2_{a_{Q_s}} \, \dt +
\int_0^T ||\bfa{u}||^2_{a_{Q_s}} \, \dt +||\bfa{u}(\cdot,0)||^2_{a_{Q_s}} \right)\,,
\end{split}
\eeq
where $\Lambda_2 = \dd \min_j \{ \lambda_{L_j +1}^{(j)} \} \,.$
\end{theorem}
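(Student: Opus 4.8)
The plan is to follow the blueprint of Theorem~\ref{mainu2}: I would bound the three quantities on the left of (\ref{convmsaq}) --- the $c$-norm of the time derivative, the $a_{Q_s}$-norm term, and the initial-time term --- separately, by the coupled counterparts of Lemmas~\ref{partuc}, \ref{auc} and \ref{0uc}, all three resting on the same mechanism. First I would set up the algebra: writing $u_{\tu{snap},i}^{(j)}=\sum_k d_{k,i}^{(j)}(t)\,\psi_{k,i}^{(j)}$ for the local snapshot component and retaining only the first $L_j$ eigenmodes in $\bfa{w}$, one has $w_i-u_{\tu{snap},i}=-\sum_j \chi_{j,i}\,e_i^{(j)}$ with $e_i^{(j)}=\sum_{k>L_j} d_{k,i}^{(j)}(t)\,\psi_{k,i}^{(j)}$. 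Since each $\bfa{\psi}_k^{(j)}\in\bfa{V}_{\tu{snap}}(\omega_j)$ solves the homogeneous coupled local problem (\ref{snapc}), so does $\bfa{e}^{(j)}=(e_1^{(j)},e_2^{(j)})$, i.e.\ $a_{Q_s}^{(j)}(\bfa{e}^{(j)},\bfa{v})=0$ for all $\bfa{v}\in\bfa{V}(\omega_j)$; the time derivative $\partial_t\bfa{e}^{(j)}$ has the same form with $d_{k,i}^{(j)}$ replaced by $\dot d_{k,i}^{(j)}$ and solves the same homogeneous problem, since the $a_{Q_s}^{(j)}$-harmonic extension is linear and independent of $t$.

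The core step I would prove is a single local estimate: for $\bfa{z}^{(j)}\in\{\bfa{e}^{(j)},\partial_t\bfa{e}^{(j)}\}$,
\[
\|\chi_j\bfa{z}^{(j)}\|_c^2+\|\chi_j\bfa{z}^{(j)}\|_{a_{Q_s}}^2 \le C\,s^{(j)}(\bfa{z}^{(j)},\bfa{z}^{(j)}).
\]
For this I would expand $\nabla(\chi_{j,i}z_i^{(j)})=\chi_{j,i}\nabla z_i^{(j)}+z_i^{(j)}\nabla\chi_{j,i}$, so that $a^{(j)}(\chi_j\bfa{z}^{(j)},\chi_j\bfa{z}^{(j)})\le 2\sum_i\int_{\omega_j}\kappa_i\chi_{j,i}^2|\nabla z_i^{(j)}|^2+2\sum_i\int_{\omega_j}\kappa_i|\nabla\chi_{j,i}|^2(z_i^{(j)})^2$; Lemma~\ref{coupledchi} applied to $\bfa{z}^{(j)}$ with zero right-hand side bounds the first sum by the second plus the cross term $q_s((z_1^{(j)},z_2^{(j)}),(\chi_{j,1}^2 z_1^{(j)},\chi_{j,2}^2 z_2^{(j)}))$, and with $|Q_s|\le\overline{Q_s}$, $\chi_{j,i}\le 1$ and the standard lower bound on the partition-of-unity weight $\sum_l\kappa_i|\nabla\chi_{l,i}|^2$ (as in \cite{mcontinua17}) this cross term and the $\bfa{L}^2(\omega_j)$-terms get absorbed into $C\,s^{(j)}(\bfa{z}^{(j)},\bfa{z}^{(j)})$. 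Since $\chi_j\bfa{z}^{(j)}\in\bfa{V}(\omega_j)$, the Poincar\'{e} inequality on $\omega_j$ turns the $a^{(j)}$-bound into a $c$-bound, and Lemma~\ref{norm_equiv} turns it into an $a_{Q_s}^{(j)}$-bound.

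With that in hand I would close the estimate using the spectral gap exactly as in the uncoupled case. Normalizing $s^{(j)}(\bfa{\psi}_k^{(j)},\bfa{\psi}_l^{(j)})=\delta_{kl}$, (\ref{eec}) gives $a_{Q_s}^{(j)}(\bfa{\psi}_k^{(j)},\bfa{\psi}_l^{(j)})=\lambda_k^{(j)}\delta_{kl}$, hence for $\bfa{z}^{(j)}=\sum_{k>L_j}c_k\bfa{\psi}_k^{(j)}$ one gets $s^{(j)}(\bfa{z}^{(j)},\bfa{z}^{(j)})=\sum_{k>L_j}c_k^2\le(\lambda_{L_j+1}^{(j)})^{-1}a_{Q_s}^{(j)}(\bfa{z}^{(j)},\bfa{z}^{(j)})\le\Lambda_2^{-1}a_{Q_s}^{(j)}(\bfa{z}^{(j)},\bfa{z}^{(j)})$; since the eigenvalues are nonnegative, truncation only lowers energy, so $a_{Q_s}^{(j)}(\bfa{e}^{(j)},\bfa{e}^{(j)})\le a_{Q_s}^{(j)}(\bfa{u}_{\tu{snap}}^{(j)},\bfa{u}_{\tu{snap}}^{(j)})$, and since $a_{Q_s}^{(j)}$ is an inner product (Lemma~\ref{norm_equiv}) and $\bfa{u}_{\tu{snap}}^{(j)}$ is the $a_{Q_s}^{(j)}$-harmonic extension carrying the trace of $\bfa{u}$ on $\partial\omega_j$, energy minimization gives $a_{Q_s}^{(j)}(\bfa{u}_{\tu{snap}}^{(j)},\bfa{u}_{\tu{snap}}^{(j)})\le a_{Q_s}^{(j)}(\bfa{u},\bfa{u})|_{\omega_j}$ up to the irreducible snapshot error; the same chain applies to $\partial_t\bfa{e}^{(j)}$ and $\partial_t\bfa{u}_{\tu{snap}}^{(j)}$. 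Summing over $j$ with the bounded-overlap property of the coarse neighborhoods then gives $\|\bfa{w}-\bfa{u}_{\tu{snap}}\|_{a_{Q_s}}^2\le C\Lambda_2^{-1}\|\bfa{u}\|_{a_{Q_s}}^2$, $\|\partial_t(\bfa{w}-\bfa{u}_{\tu{snap}})\|_c^2\le C\Lambda_2^{-1}\|\partial_t\bfa{u}\|_{a_{Q_s}}^2$ and $\|\bfa{w}(\cdot,0)-\bfa{u}_{\tu{snap}}(\cdot,0)\|_c^2\le C\Lambda_2^{-1}\|\bfa{u}(\cdot,0)\|_{a_{Q_s}}^2$; integrating the first two over $(0,T)$ and adding the third yields (\ref{convmsaq}).

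The part I expect to be the main obstacle is the coupling cross term $q_s$ that Lemma~\ref{coupledchi} leaves on the right-hand side, which has no analogue in the uncoupled Theorem~\ref{mainu2}: controlling it forces the coercivity condition $1-2\overline{Q_s}C_p^2/\underline{\kappa}>0$ from Lemma~\ref{norm_equiv} (so that $a_{Q_s}^{(j)}$ is a genuine inner product and the harmonic-extension energy minimization is legitimate) together with the lower bound on the partition-of-unity weight (so that $\bfa{L}^2(\omega_j)$-norms can be absorbed into $s^{(j)}$). Once these are in place, the rest is a faithful transcription of the uncoupled argument with $a$ replaced by $a_{Q_s}$ and $\Lambda_1$ by $\Lambda_2$.
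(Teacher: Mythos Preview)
Your proposal is correct and follows essentially the same approach as the paper: the paper's proof of Theorem~\ref{mainc2} consists precisely of invoking the coupled versions of Lemmas~\ref{partuc}, \ref{auc} and~\ref{0uc}, each of which carries out the very chain you describe (reduce to $s^{(j)}(\bfa{e}^{(j)},\bfa{e}^{(j)})$ via Lemma~\ref{coupledchi} and the partition-of-unity weight, apply the spectral gap from~(\ref{eec}), then use that $\bfa{u}_{\tu{snap}}^{(j)}$ is the $a_{Q_s}^{(j)}$-harmonic extension of $\bfa{u}|_{\partial\omega_j}$ to compare with $\bfa{u}$). The only cosmetic difference is that the paper bounds the $c$-norm terms directly by $s^{(j)}$ using $\chi_{j,i}\le 1$ and $\mathcal C_{ii}\le\bar{\mathcal C}$ together with the lower bound on the weight $\kappa_i\sum_l|\nabla\chi_{l,i}|^2$, rather than detouring through Poincar\'e on $\omega_j$ as you do; both routes are valid.
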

\begin{proof}
Following the proof in \cite{mcontinua17, Tony11}, our proof is derived from Lemmas \ref{partuc}, \ref{auc} and \ref{0uc}. 
\end{proof}

\subsection{Lemmas for the main convergence results}\label{lems}
In this part, we provide and prove some Lemmas that Theorems \ref{mainu2} and \ref{mainc2} directly follow from. 

\begin{lemma}\label{partuc}
 Let $\bfa{u}$, $\bfa{u}_{\tu{snap}}$, $\bfa{w}$, $\Lambda_1$ and $\Lambda_2$ be defined in Theorems \ref{mainu2} and \ref{mainc2}.  For the uncoupled GMsFEM, we have
 \begin{equation}\label{partu}
  \int_0^T \left \| \frac{\partial(\bfa{w} - \bfa{u}_{\tu{snap}})}{\partial t} \right \|^2_c \, \dt \leq \frac{C}{\Lambda_1} \int_0^T \left \| \frac{\partial \bfa{u}}{\partial t} \right \|^2_a \, \dt\,.
 \end{equation}
 For the coupled GMsFEM, we have
 \begin{equation}\label{partc}
  \int_0^T \left \| \frac{\partial(\bfa{w} - \bfa{u}_{\tu{snap}})}{\partial t} \right \|^2_c \, \dt \leq \frac{C}{\Lambda_2} \int_0^T \left \| \frac{\partial \bfa{u}}{\partial t} \right \|^2_{a_{Q_s}} \, \dt\,.
  \end{equation}
\end{lemma}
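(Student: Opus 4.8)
The plan is to observe that, on every coarse neighbourhood $\omega_j$, the difference $\bfa{w}-\bfa{u}_{\tu{snap}}$ is exactly the tail of the local spectral expansion — the part spanned by the eigenfunctions \emph{discarded} when passing from the snapshot space to the multiscale space — and that time differentiation commutes with the (linear, $t$-independent) local snapshot extension. Concretely, in the uncoupled case, (\ref{usnapu}) and (\ref{wu}) give for each continuum $i$
\[ \partial_t\big(w_i-u_{\tu{snap},i}\big)=-\sum_{j=1}^{N_v}\chi_{j,i}\,\partial_t e_i^{(j)},\qquad \partial_t e_i^{(j)}:=\sum_{k>L_j}\dot d_{k,i}^{(j)}(t)\,\psi_{k,i}^{(j)}, \]
so that $\partial_t e_i^{(j)}$ involves only eigenfunctions of (\ref{eeunc}) with index $k>L_j$; the coupled case is analogous with the vector eigenfunctions of (\ref{eec}).

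For the uncoupled estimate I would first pass from the $c$-norm to the $L^2$-norm using $\mathcal{C}_{ii}\le\bar{\mathcal C}$ (Assumption \ref{bdd}), then use the finite overlap of the neighbourhoods $\{\omega_j\}$ together with $0\le\chi_{j,i}\le1$, and a standard bound tying the partition-of-unity cutoff to the stabilising form $s_i^{(j)}$ of (\ref{schi}) (via $\kappa_i\ge\underline\kappa$), to get
\[ \|\partial_t(\bfa{w}-\bfa{u}_{\tu{snap}})\|_c^2 \le C\sum_{i=1}^{2}\sum_{j=1}^{N_v}\big\|\chi_{j,i}\,\partial_t e_i^{(j)}\big\|_{L^2(\omega_j)}^2 \le C\sum_{i=1}^{2}\sum_{j=1}^{N_v}s_i^{(j)}\big(\partial_t e_i^{(j)},\partial_t e_i^{(j)}\big). \]
Since $\partial_t e_i^{(j)}$ is spanned by eigenfunctions with $k>L_j$ only, $s_i^{(j)}$- and $a_i^{(j)}$-orthogonality of the eigenfunctions plus the spectral gap give $s_i^{(j)}(\partial_t e_i^{(j)},\partial_t e_i^{(j)})\le(\lambda_{L_j+1,i}^{(j)})^{-1}a_i^{(j)}(\partial_t e_i^{(j)},\partial_t e_i^{(j)})\le\Lambda_1^{-1}a_i^{(j)}(\partial_t e_i^{(j)},\partial_t e_i^{(j)})$. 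The crucial step is then a Dirichlet-principle argument: by $a_i^{(j)}$-orthogonality, $a_i^{(j)}(\partial_t e_i^{(j)},\partial_t e_i^{(j)})\le a_i^{(j)}(\partial_t u_{\tu{snap},i}^{(j)},\partial_t u_{\tu{snap},i}^{(j)})$, and because the snapshot extension is linear and time-independent, $\partial_t u_{\tu{snap},i}^{(j)}$ is the $\kappa_i$-harmonic extension into $\omega_j$ of $\partial_t u_i|_{\partial\omega_j}$, whence minimality of the harmonic extension yields $a_i^{(j)}(\partial_t u_{\tu{snap},i}^{(j)},\partial_t u_{\tu{snap},i}^{(j)})\le a_i^{(j)}(\partial_t u_i,\partial_t u_i)$ on $\omega_j$. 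Summing over $i,j$ (finite overlap once more, so that $\sum_{i,j}a_i^{(j)}(\partial_t u_i,\partial_t u_i)\le C\|\partial_t\bfa{u}\|_a^2$) and integrating over $(0,T)$ gives (\ref{partu}).

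For the coupled bound (\ref{partc}) I would run the identical chain with the vector eigenfunctions $\bfa{\psi}_k^{(j)}$ of (\ref{eec}), the stabiliser $s^{(j)}=\sum_i s_i^{(j)}$ of (\ref{pouu}), the symmetric form $a_{Q_s}^{(j)}$ in place of $a_i^{(j)}$ (coercive on $\bfa{V}(\omega_j)$ under the running hypothesis $1-2\overline{Q_s}C_p^2/\underline\kappa>0$, cf.\ Lemma \ref{norm_equiv}), and $\Lambda_2$ in place of $\Lambda_1$. The only substantive change is that in the Dirichlet-principle step the competitor must minimise the $a_{Q_s}^{(j)}$-energy among functions with the prescribed trace; but this minimiser is exactly the coupled harmonic extension (\ref{snapc}), whose Euler--Lagrange equations are those displayed there, so $a_{Q_s}^{(j)}(\partial_t\bfa{u}_{\tu{snap}}^{(j)},\partial_t\bfa{u}_{\tu{snap}}^{(j)})\le a_{Q_s}^{(j)}(\partial_t\bfa{u},\partial_t\bfa{u})$ on $\omega_j$, and summing and integrating gives (\ref{partc}).

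I expect the main obstacle to be making the commutation/Dirichlet-principle step rigorous: it requires enough time regularity of the reference solution for $\partial_t\bfa{u}\in\bfa{V}$ and for $\partial_t\bfa{u}_{\tu{snap}}^{(j)}$ to again be the (coupled) harmonic extension of $\partial_t\bfa{u}|_{\partial\omega_j}$, together with the energy-minimality of the harmonic extension among competitors sharing its trace. The partition-of-unity estimate relating $\chi_{j,i}$ to $s_i^{(j)}$, and the bookkeeping of the mesh- and contrast-dependent constants absorbed into $C$, are routine.
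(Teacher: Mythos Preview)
Your proposal is correct and follows essentially the same route as the paper's proof: expand $\partial_t(\bfa{w}-\bfa{u}_{\tu{snap}})$ as the partition-of-unity--weighted spectral tail, bound the $c$-norm by the local $s_i^{(j)}$-forms, invoke the eigenvalue gap to pass to $a_i^{(j)}/\lambda_{L_j+1,i}^{(j)}$, and finish with energy minimality of the (coupled) harmonic extension. The paper phrases the last step as Galerkin orthogonality of the snapshot projection rather than the Dirichlet principle, but these are equivalent, and your remarks on time regularity and the $\chi$-to-$s$ bound match the implicit assumptions in the paper.
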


\begin{proof}
 Based on \cite{mcontinua17, Tony11}, we will first derive the proof for the case of uncoupled GMsFEM.  Note that
 \begin{align}\label{partu1}
  \begin{split}
  & \left \| \frac{\partial \bfa{w}}{\partial t}  - \frac{\partial \bfa{u}_{\tu{snap}}}{\partial t} \right\|^2_c \\
  & = \sum_{i=1}^{2} \int_{\Omega} \mathcal{C}_{ii} \left (\frac{ \partial w_i}{\partial t} - \frac{\partial u_{\tu{snap},i}}{\partial t} \right)^2 \, \dx\\
  & = \sum_{i=1}^2 \int_{\Omega} \mathcal{C}_{ii} \left ( \sum_{j=1}^{N_v} \sum_{k> L_j} \frac{\partial d_{k,i}^{(j)} (t)} {\partial t} \chi_{j,i}(\bfa{x}) \psi_{k,i}^{(j)}(\bfa{x}) \right)^2 \, 
  \dx\\
 & \leq C \sum_{i=1}^2 \sum_{j=1}^{N_v} \int_{\omega_j} \kappa_i\left( \sum_{j=1}^{N_v} |\nabla \chi_{j,i}|^2\right) \left( \sum_{k> L_j} \frac{\partial d_{k,i}^{(j)} (t)} {\partial t} \psi_{k,i}^{(j)}(\bfa{x}) \right)^2 \, \dx\\
 &= C \sum_{i=1}^2 \sum_{j=1}^{N_v} s_i^{(j)} \left( \sum_{k> L_j} \frac{\partial d_{k,i}^{(j)} (t)} {\partial t} \psi_{k,i}^{(j)}(\bfa{x}) , \sum_{k> L_j} \frac{\partial d_{k,i}^{(j)} (t)} {\partial t} \psi_{k,i}^{(j)}(\bfa{x})\right)\,.
 \end{split}
 \end{align}
 
 By the spectral problem (\ref{eeunc}) and the orthogonality of eigenfunctions $\{\psi_{k,i}^{(j)}(\bfa{x})\}_k$, we have
 \begin{align}\label{partu2}
 \begin{split}
  & s_i^{(j)}\left(\sum_{k> L_j} \frac{\partial d_{k,i}^{(j)} (t)} {\partial t} \psi_{k,i}^{(j)}(\bfa{x}), \sum_{k> L_j} \frac{\partial d_{k,i}^{(j)} (t)} {\partial t} \psi_{k,i}^{(j)}(\bfa{x})\right)\\
  & \leq \frac{1}{\lambda_{L_j+1,i}^{(j)}} a_i^{(j)} \left(\sum_{k> L_j} \frac{\partial d_{k,i}^{(j)} (t)} {\partial t} \psi_{k,i}^{(j)}(\bfa{x}), \sum_{k> L_j} \frac{\partial d_{k,i}^{(j)} (t)} {\partial t} \psi_{k,i}^{(j)}(\bfa{x})\right) \\
  & \leq \frac{1}{\lambda_{L_j+1,i}^{(j)}} a_i^{(j)} \left( \sum_{k} \frac{\partial d_{k,i}^{(j)} (t)} {\partial t} \psi_{k,i}^{(j)}(\bfa{x}), \sum_{k} \frac{\partial d_{k,i}^{(j)} (t)} {\partial t} \psi_{k,i}^{(j)}(\bfa{x})\right)\\
  &= \frac{1}{\lambda_{L_j +1,i}^j} a_i^{(j)} \left( \frac{\partial u^{(j)}_{\tu{snap},i}}{\partial t},\frac{\partial u^{(j)}_{\tu{snap},i}}{\partial t} \right)\,.
  \end{split}
 \end{align}
 Therefore, (\ref{partu1}) becomes 
 \begin{equation}\label{partu3}
   \left \| \frac{\partial \bfa{w}}{\partial t}  - \frac{\partial \bfa{u}_{\tu{snap}}}{\partial t} \right\|^2_c \leq C \sum_{i=1}^{2} \sum_{j=1}^{N_v} \frac{1}{\lambda_{L_j +1,i}^j} a_i^{(j)} \left( \frac{\partial u^{(j)}_{\tu{snap},i}}{\partial t},\frac{\partial u^{(j)}_{\tu{snap},i}}{\partial t} \right)\,. 
 \end{equation}
 Since $u^{(j)}_{\tu{snap},i}$ is the projection of $u_i$ in each $\omega_j$ by the definition (\ref{usnapu}), it follows that 
 \[a_i^{(j)}(u_i,v_i) = a_i^{(j)} \left(u^{(j)}_{\tu{snap},i},v_i\right)\,, \quad \forall v_i \in V_{\tu{snap}}^{i} (\omega_j)\,.\]
 More specifically, let $v_i = u_{\tu{snap},i}^{(j)}$, we have
 \[a_i^{(j)}\left(u_{\tu{snap},i}^{(j)},u_{\tu{snap},i}^{(j)}\right) = a_i^{(j)} \left(u_i,u_{\tu{snap},i}^{(j)}\right)\,, \]
 \[\norm{u^{(j)}_{\tu{snap},i}}_{a^{(j)}_i}^2 \leq \norm{u_i}_{a^{(j)}_i} \, \norm{u_{\tu{snap},i}^{(j)}}_{a^{(j)}_i}\,.\]
 Hence,
 \begin{equation}\label{proju}
  a_i^{(j)}\left(u_{\tu{snap},i}^{(j)},u_{\tu{snap},i}^{(j)}\right) \leq a_i^{(j)} (u_i,u_i)\,.
 \end{equation}
 Similarly,
 \[a_i^{(j)}\left(\frac{\partial u_{\tu{snap},i}^{(j)}}{\partial t},\frac{\partial u_{\tu{snap},i}^{(j)}} {\partial t}\right) \leq a_i^{(j)} \left(\frac{\partial u_i}{\partial t},\frac{\partial u_i}{\partial t}\right)\,.\]
 Thus, from (\ref{partu3}), we get
 \begin{align}\label{partu4}
  \begin{split}
  \left \| \frac{\partial (\bfa{w}  -  \bfa{u}_{\tu{snap}})}{\partial t} \right\|^2_c 
  & \leq C \sum_{i=1}^{2} \sum_{j=1}^{N_v} \frac{1}{\lambda_{L_j +1,i}^{(j)}} a_i^{(j)} \left(\frac{\partial u_i}{\partial t},\frac{\partial u_i}{\partial t}\right)\\
  &\leq \frac{C}{\dd \min_{j,i}\{\lambda_{L_j +1,i}^{(j)}\}} a\left( \frac{\partial \bfa{u}}{\partial t},\frac{\partial \bfa{u}}{\partial t}\right)\\
  & = \frac{C}{\dd \min_{j,i}\{\lambda_{L_j +1,i}^{(j)}\}}  \left \|\frac{\partial \bfa{u}}{\partial t} \right \|^2_a\,.
  \end{split}
 \end{align}
 
 For the case of coupled GMsFEM, recall that $s^{(j)}(\cdot,\cdot) = \sum_{i=1}^{2} s_i^{(j)}(\cdot,\cdot)$.  Applying the same arguments, we get
 \[\left \| \frac{\partial (\bfa{w}  -  \bfa{u}_{\tu{snap}})}{\partial t} \right\|^2_c  \leq C \sum_{i=1}^2 \sum_{j=1}^{N_v} s_i^{(j)} \left( \sum_{k> L_j} \frac{\partial d_{k,i}^{(j)} (t)} {\partial t} \psi_{k,i}^{(j)}(\bfa{x}) , \sum_{k> L_j} \frac{\partial d_{k,i}^{(j)} (t)} {\partial t} \psi_{k,i}^{(j)}(\bfa{x})\right)\,.\]
\end{proof}

\bigskip

\begin{lemma}\label{auc}
 Let $\bfa{u}$, $\bfa{u}_{\tu{snap}}$, $\bfa{w}$, $\Lambda_1$ and $\Lambda_2$ be defined in Theorems \ref{mainu2} and \ref{mainc2}.  For the uncoupled GMsFEM, we have
 \begin{equation}\label{au}
  \int_0^T \left \| \bfa{w} - \bfa{u}_{\tu{snap}}\right \|^2_a\, \dt \leq \frac{C}{\Lambda_1}\int_0^T \left \|  \bfa{u} \right \|^2_{a} \, \dt\,.
 \end{equation}
 For the coupled GMsFEM, we have
 \begin{equation}\label{ac}
  \int_0^T \left \| \bfa{w} - \bfa{u}_{\tu{snap}}\right \|^2_{a_{Q_s}}\, \dt \leq \frac{C}{\Lambda_2} \int_0^T \left \|  \bfa{u} \right \|^2_{a_{Q_s}} \, \dt\,.
  \end{equation}
\end{lemma}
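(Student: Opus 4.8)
The plan is to prove (\ref{au}) and (\ref{ac}) by the same argument as Lemma \ref{partuc}, now estimating the spatial (energy) norm rather than the $c$-norm of a time derivative; the extra ingredient that this version requires is the weighted energy estimate of Lemma \ref{uncoupledchi} (resp.\ Lemma \ref{coupledchi}). Consider the uncoupled GMsFEM first. Writing, as in (\ref{wu}) and (\ref{usnapu}),
\[
w_i - u_{\tu{snap},i} = \sum_{j=1}^{N_v}\chi_{j,i}\,e_i^{(j)}\,,\qquad
e_i^{(j)} := \sum_{k>L_j}d_{k,i}^{(j)}(t)\,\psi_{k,i}^{(j)}\in V^i_{\tu{snap}}(\omega_j)\,,
\]
I note that each $e_i^{(j)}$ is $\kappa_i$-harmonic in $\omega_j$, being a linear combination of the harmonic snapshot functions of (\ref{snapu}); hence $(e_1^{(j)},e_2^{(j)})$ satisfies the hypothesis of Lemma \ref{uncoupledchi} with right-hand side $f_1=f_2\equiv0$. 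Using the bounded overlap of the coarse neighbourhoods $\{\omega_j\}$ and the product rule $\nabla(\chi_{j,i}e_i^{(j)})=\chi_{j,i}\nabla e_i^{(j)}+e_i^{(j)}\nabla\chi_{j,i}$, one obtains
\beq\label{plan_a1}
\begin{split}
\|\bfa{w}-\bfa{u}_{\tu{snap}}\|_a^2
&\leq C\sum_{i=1}^2\sum_{j=1}^{N_v}\int_{\omega_j}\kappa_i\,|\nabla(\chi_{j,i}e_i^{(j)})|^2\,\dx\\
&\leq C\sum_{i=1}^2\sum_{j=1}^{N_v}\left(\int_{\omega_j}\kappa_i\chi_{j,i}^2|\nabla e_i^{(j)}|^2\,\dx+\int_{\omega_j}\kappa_i|\nabla\chi_{j,i}|^2(e_i^{(j)})^2\,\dx\right)\\
&\leq C\sum_{i=1}^2\sum_{j=1}^{N_v}s_i^{(j)}(e_i^{(j)},e_i^{(j)})\,,
\end{split}
\eeq
where the last line applies Lemma \ref{uncoupledchi} (with zero right-hand side) to the first integral and bounds the second integral directly by $s_i^{(j)}(e_i^{(j)},e_i^{(j)})$ through the definition (\ref{schi}).

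Then, exactly as in the passage (\ref{partu2})--(\ref{proju}) within the proof of Lemma \ref{partuc}, the spectral problem (\ref{eeunc}), the $s_i^{(j)}$-orthonormality of the eigenfunctions, and the energy-minimizing property of the $a_i^{(j)}$-harmonic extension $u^{(j)}_{\tu{snap},i}$ of $u_i$ give
\beq\label{plan_a2}
s_i^{(j)}(e_i^{(j)},e_i^{(j)})\leq\frac{1}{\lambda_{L_j+1,i}^{(j)}}\,a_i^{(j)}(e_i^{(j)},e_i^{(j)})\leq\frac{1}{\lambda_{L_j+1,i}^{(j)}}\,a_i^{(j)}\big(u^{(j)}_{\tu{snap},i},u^{(j)}_{\tu{snap},i}\big)\leq\frac{1}{\lambda_{L_j+1,i}^{(j)}}\,a_i^{(j)}(u_i,u_i)\,.
\eeq
Combining (\ref{plan_a1}) and (\ref{plan_a2}), using $\lambda_{L_j+1,i}^{(j)}\geq\Lambda_1$, and invoking the bounded overlap once more to pass from $\sum_{i,j}a_i^{(j)}(u_i,u_i)$ to $\|\bfa{u}\|_a^2$, one arrives at $\|\bfa{w}-\bfa{u}_{\tu{snap}}\|_a^2\leq(C/\Lambda_1)\|\bfa{u}\|_a^2$ for a.e.\ $t\in(0,T)$; integrating over $(0,T)$ yields (\ref{au}).

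For the coupled GMsFEM the argument is the same after replacing $a$ by $a_{Q_s}$, $s_i^{(j)}$ by $s^{(j)}=\sum_{i=1}^2 s_i^{(j)}$, Lemma \ref{uncoupledchi} by Lemma \ref{coupledchi}, the spectral problem (\ref{eeunc}) by (\ref{eec}), and $\Lambda_1$ by $\Lambda_2$. Here $\bfa{e}^{(j)}=\sum_{k>L_j}d_k^{(j)}(t)\,\bfa{\psi}_{k}^{(j)}$ satisfies the interior weak equations of the coupled local system (\ref{snapc}) in $\omega_j$, which is precisely the hypothesis of Lemma \ref{coupledchi} with $f_1=f_2\equiv0$, and $\bfa{u}^{(j)}_{\tu{snap}}$ is now the $a^{(j)}_{Q_s}$-harmonic, hence energy-minimizing, extension of $\bfa{u}$, so that $a^{(j)}_{Q_s}(\bfa{u}^{(j)}_{\tu{snap}},\bfa{u}^{(j)}_{\tu{snap}})\leq a^{(j)}_{Q_s}(\bfa{u},\bfa{u})$ and the chain (\ref{plan_a2}) carries over with $\lambda_{L_j+1}^{(j)}\geq\Lambda_2$. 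The genuinely new point, which I expect to be the main obstacle, is that Lemma \ref{coupledchi} leaves on its left-hand side the cross term $q_s((e_1^{(j)},e_2^{(j)}),(\chi_{j,1}^2e_1^{(j)},\chi_{j,2}^2e_2^{(j)}))$, which is not sign-definite; after moving it to the right-hand side and bounding it with $|Q_s|\leq\overline{Q_s}$, the Poincar\'{e} inequality (\ref{poincare}) and the Cauchy-Schwarz inequality, one must still be able to dominate $\int_{\omega_j}(\kappa_1\chi_{j,1}^2|\nabla e_1^{(j)}|^2+\kappa_2\chi_{j,2}^2|\nabla e_2^{(j)}|^2)$ by $C\,s^{(j)}(\bfa{e}^{(j)},\bfa{e}^{(j)})$, which is exactly where the standing hypothesis $\bigl(1-\tfrac{2\overline{Q_s}C_p^2}{\underline{\kappa}}\bigr)>0$ and the norm equivalence of Lemma \ref{norm_equiv} are used. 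Everything else is the same partition-of-unity bookkeeping and spectral-gap argument as in the proof of Lemma \ref{partuc}, and (\ref{ac}) then follows by integrating in $t$.
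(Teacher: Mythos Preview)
Your proof is correct and follows essentially the same route as the paper's: define the tail $e_i^{(j)}$, split $\nabla(\chi_{j,i}e_i^{(j)})$ by the product rule, use Lemma~\ref{uncoupledchi} (resp.\ Lemma~\ref{coupledchi}) with $f\equiv0$ together with the definition of $s_i^{(j)}$ to bound both pieces by $s_i^{(j)}(e_i^{(j)},e_i^{(j)})$, and then apply the spectral gap and the energy-minimizing property of the harmonic extension exactly as in (\ref{partu2})--(\ref{proju}). Your discussion of the $q_s$ cross term in the coupled case is in fact more careful than the paper's own treatment, which simply asserts that ``similar arguments'' apply.
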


\begin{proof}
This Lemma's proof is based on \cite{mcontinua17,Tony11}.

For the case of uncoupled GMsFEM, we define 
\[e_i^{(j)} = \sum_{k> L_j}  d_{k,i}^{(j)}(t) \, \psi_{k,i}^{(j)}(\bfa{x})\,.\]
 By (\ref{wu}) and (\ref{usnapu}), we have
 \begin{align}\label{au1}
  \begin{split}
  & \norm{\bfa{w} - \bfa{u}_{\tu{snap}}}^2_a \\
  & = \sum_{i=1}^{2} \int_{\Omega} \kappa_i \left | \nabla(w_i - u_{\tu{snap},i}) \right |^2 \, \dx \\
  & = \sum_{i=1}^2 \int_{\Omega} \kappa_i \left |  \sum_{j=1}^{N_v} \sum_{k> L_j} \nabla\left( d_{k,i}^{(j)}(t) \, \chi_{j,i} \, \psi_{k,i}^{(j)}\right)\right |^2 \, \dx\\
  & \leq N_v \sum_{i=1}^2 \sum_{j=1}^{N_v} \int_{\omega_j} \kappa_i \left | \nabla \left(\chi_{j,i} e_i^{(j)}\right) \right |^2 \, \dx\\
  &\leq 2 N_v \sum_{i=1}^2 \sum_{j=1}^{N_v}\left( \int_{\omega_j} \kappa_i \left | \nabla \chi_{j,i} \right|^2 \left | e_i^{(j)} \right |^2 \, \dx + \int_{\omega_j} \kappa_i \left |  \chi_{j,i} \right|^2 \left | \nabla e_i^{(j)} \right |^2 \, \dx \right)\,.
  \end{split}
 \end{align}
Note that  
\[  \int_{\omega_j} \kappa_i \left | \nabla \chi_{j,i} \right|^2 \left | e_i^{(j)} \right |^2 \, \dx \leq    \int_{\omega_j} \kappa_i \left( \sum_{j=1}^{N_v} |\nabla \chi_{j,i}|^2\right) \left | e_i^{(j)} \right |^2 \, \dx =  s_i^{(j)} \left(e_i^{(j)},e_i^{(j)}\right)\,.  \]
From this, Lemma \ref{uncoupledchi} and (\ref{snapu}), there exists some positive constant $D_3$ such that
\begin{align*}
 \int_{\omega_j} \kappa_i \left |  \chi_{j,i} \right|^2 \left | \nabla e_i^{(j)} \right |^2 \, \dx \leq D_3 \int_{\omega_j} \kappa_i \left | \nabla \chi_{j,i} \right|^2 \left | e_i^{(j)} \right |^2 
 \leq D_3 s_i^{(j)} \left(e_i^{(j)},e_i^{(j)}\right)\,. 
\end{align*}
Therefore,
\[\|\bfa{w} - \bfa{u}_{\tu{snap}} \|_a^2  \leq D_4  \sum_{i=1}^2 \sum_{j=1}^{N_v} s_i^{(j)} \left(e_i^{(j)},e_i^{(j)}\right)\,. \]
Finally, based on bilinearity of $a_i^{(j)}$ and $s_i^{(j)}$ as well as the orthogonality of $\{\psi_{k,i}^j \}_k$, and the definition of the eigenprojection, for the case of uncoupled GMsFEM, as in (\ref{proju}), we get
\begin{align*}
s_i^{(j)} \left(e_i^{(j)},e_i^{(j)}\right) &\leq \frac{1}{\lambda_{L_j+1,i}^{(j)}} a_i^{(j)}\left(e_i^{(j)},e_i^{(j)}\right) \leq \frac{1}{\lambda_{L_j+1,i}^{(j)}} a_i^{(j)} \left(u^{(j)}_{\tu{snap},i}, u^{(j)}_{\tu{snap},i}\right) \\
& \leq \frac{1}{\lambda_{L_j+1,i}^{(j)}} a_i^{(j)} \left(u_{i}, u_{i}\right)\,.
\end{align*}
Hence, the desired result (\ref{au}) follows.

For the case of coupled GMsFEM, similar arguments are applied for
\[\bfa{e}^{(j)} = \sum_{k> L_j}  d_{k}^{(j)}(t) \, \bfa{\psi}_{k}^{(j)}(\bfa{x})\,.\]
\end{proof}


\bigskip

\begin{lemma}\label{0uc}
 Let $\bfa{u}$, $\bfa{u}_{\tu{snap}}$, $\bfa{w}$, $\Lambda_1$ and $\Lambda_2$ be defined in Theorems \ref{mainu2} and \ref{mainc2}.  For the uncoupled GMsFEM, we have
 \begin{equation}\label{0u}
  \left \| \bfa{w}(\cdot,0) - \bfa{u}_{\tu{snap}}(\cdot,0)\right \|^2_c \leq \frac{C}{\Lambda_1} \left \|  \bfa{u}(\cdot,0) \right \|^2_{a} \,.
 \end{equation}
 For the coupled GMsFEM, we have
 \begin{equation}\label{0c}
   \left \| \bfa{w}(\cdot,0) - \bfa{u}_{\tu{snap}}(\cdot,0)\right \|^2_c\leq \frac{C}{\Lambda_2} \left \|  \bfa{u}(\cdot,0) \right \|^2_{a_{Q_s}} \,.
  \end{equation}
\end{lemma}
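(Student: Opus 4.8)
The plan is to recognize that $\bfa{w}(\cdot,0)-\bfa{u}_{\tu{snap}}(\cdot,0)$ is precisely the high-frequency tail of the local eigen-expansions: comparing (\ref{usnapu})--(\ref{wu}) (resp.\ (\ref{usnapc})--(\ref{wc})) at the fixed time slice $t=0$, the first $L_j$ eigenmodes cancel and only the modes with $k>L_j$ survive. Thus, in the uncoupled case, setting $e_i^{(j)}=\sum_{k>L_j}d_{k,i}^{(j)}(0)\,\psi_{k,i}^{(j)}$, we have $w_i(\cdot,0)-u_{\tu{snap},i}(\cdot,0)=-\sum_{j=1}^{N_v}\chi_{j,i}e_i^{(j)}$, and the argument then runs exactly along the lines of Lemmas \ref{partuc} and \ref{auc}, only with $\bfa{w}-\bfa{u}_{\tu{snap}}$ itself (rather than its time derivative) in the $c$-norm on the left, and without any time integration.

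For the uncoupled estimate (\ref{0u}) I would first reproduce the chain of inequalities in (\ref{partu1}): bound $\mathcal{C}_{ii}\le\bar{\mathcal{C}}$, use the finite overlap of the coarse neighborhoods $\{\omega_j\}$ to pass from $\big(\sum_j\chi_{j,i}e_i^{(j)}\big)^2$ to $C\sum_j\chi_{j,i}^2(e_i^{(j)})^2$, and invoke the partition-of-unity structure of the $\chi_{j,i}$ together with Lemma \ref{uncoupledchi}, to obtain
\[
\|\bfa{w}(\cdot,0)-\bfa{u}_{\tu{snap}}(\cdot,0)\|_c^2\le C\sum_{i=1}^2\sum_{j=1}^{N_v}s_i^{(j)}\!\left(e_i^{(j)},e_i^{(j)}\right).
\]
Next, by the spectral problem (\ref{eeunc}) and orthogonality of $\{\psi_{k,i}^{(j)}\}_k$, each summand satisfies $s_i^{(j)}(e_i^{(j)},e_i^{(j)})\le\frac{1}{\lambda_{L_j+1,i}^{(j)}}a_i^{(j)}(e_i^{(j)},e_i^{(j)})\le\frac{1}{\lambda_{L_j+1,i}^{(j)}}a_i^{(j)}\big(u_{\tu{snap},i}^{(j)}(\cdot,0),u_{\tu{snap},i}^{(j)}(\cdot,0)\big)$, exactly as in (\ref{partu2}). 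Finally the local $\kappa_i$-harmonic extension (projection) property (\ref{proju}) gives $a_i^{(j)}(u_{\tu{snap},i}^{(j)},u_{\tu{snap},i}^{(j)})\le a_i^{(j)}(u_i,u_i)$ at $t=0$; summing over $i,j$, bounding $1/\lambda_{L_j+1,i}^{(j)}\le 1/\Lambda_1$, and using finite overlap once more to get $\sum_{i,j}a_i^{(j)}(u_i(\cdot,0),u_i(\cdot,0))\le C\,\|\bfa{u}(\cdot,0)\|_a^2$ yields (\ref{0u}).

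For the coupled estimate (\ref{0c}) the steps are identical after the obvious substitutions: replace $e_i^{(j)}$ by $\bfa{e}^{(j)}=\sum_{k>L_j}d_k^{(j)}(0)\,\bfa{\psi}_k^{(j)}$, the scalar forms $s_i^{(j)}$, $a_i^{(j)}$ by the coupled forms $s^{(j)}=\sum_i s_i^{(j)}$ and $a_{Q_s}^{(j)}$, the spectral problem (\ref{eeunc}) by (\ref{eec}), and Lemma \ref{uncoupledchi} by Lemma \ref{coupledchi} in the $c$-norm reduction; the eigenprojection is then $a_{Q_s}^{(j)}$-orthogonal, so one arrives at $C\sum_j s^{(j)}(\bfa{e}^{(j)},\bfa{e}^{(j)})\le \frac{C}{\Lambda_2}\sum_j a_{Q_s}^{(j)}(\bfa{u},\bfa{u})\le \frac{C}{\Lambda_2}\|\bfa{u}(\cdot,0)\|_{a_{Q_s}}^2$, with $\Lambda_2=\min_j\lambda_{L_j+1}^{(j)}$.

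I expect the only genuinely delicate point to be the harmonic-extension comparison $a_i^{(j)}(u_{\tu{snap},i}^{(j)},u_{\tu{snap},i}^{(j)})\le a_i^{(j)}(u_i,u_i)$ and its coupled counterpart with $a_{Q_s}^{(j)}$: one must use that $u_{\tu{snap}}^{(j)}$, being the (discrete) $\kappa_i$-harmonic — resp.\ $a_{Q_s}^{(j)}$-harmonic via (\ref{snapc}) — extension of the trace $u_i|_{\partial\omega_j}$, minimizes the local energy among all functions sharing that trace, so replacing it by $u_i$ does not decrease the energy. This requires $a_{Q_s}^{(j)}$ to be a symmetric positive form on $\bfa{V}(\omega_j)$, guaranteed by Lemma \ref{norm_equiv} under the standing assumption $1-2\overline{Q_s}C_p^2/\underline{\kappa}>0$, and (at the discrete level) that the reference solution $\bfa{u}$ is captured by, or well approximated in, the fine space so the comparison is legitimate. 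Everything else is routine bookkeeping of the finite-overlap constant and of $\Lambda_1$ (resp.\ $\Lambda_2$).
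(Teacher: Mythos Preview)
Your proposal is correct and follows essentially the same route as the paper's proof: define the local tail $e_{0,i}^{(j)}=\sum_{k>L_j}d_{k,i}^{(j)}(0)\psi_{k,i}^{(j)}$, bound the $c$-norm by $C\sum_{i,j}s_i^{(j)}(e_{0,i}^{(j)},e_{0,i}^{(j)})$ via finite overlap, apply the spectral inequality $s_i^{(j)}\le \lambda_{L_j+1,i}^{-1}a_i^{(j)}$, and close with the energy-minimizing property of the harmonic extension. One small remark: your reference to Lemma~\ref{uncoupledchi} in the reduction of the $c$-norm is superfluous here --- that lemma controls $\int_{\omega_j}\kappa_i\chi_{j,i}^2|\nabla e|^2$ and is used in Lemma~\ref{auc} for the $a$-norm; for the $c$-norm the passage to $s_i^{(j)}$ is the direct step carried out in~(\ref{partu1}), using only $\mathcal{C}_{ii}\le\bar{\mathcal{C}}$, $\chi_{j,i}^2\le 1$, and the comparison with the weight $\kappa_i\sum_l|\nabla\chi_{l,i}|^2$.
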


\begin{proof}
 For the case of uncoupled GMsFEM, as in Lemma \ref{auc}, we let
 \[e_{0,i}^{(j)} = \sum_{k> L_j} d_{k,i}^{(j)} (0) \psi_{k,i}^{(j)} (\bfa{x})\,.\]
 Then, following the proof of Lemma \ref{auc}, we get 
 \begin{align}\label{0u1}
  \begin{split}
   & \left \| \bfa{w}(\cdot,0) - \bfa{u}_{\tu{snap}}(\cdot,0)\right \|^2_c \\
   & = \sum_{i=1}^2 \int_{\Omega} \mathcal{C}_{ii} \left | u_{\tu{snap},i}(\cdot,0) -w_i(\cdot,0) \right |^2 \, \dx\\
   & = \sum_{i=1}^2 \int_{\Omega} \mathcal{C}_{ii} \left | \sum_{j=1}^{N_v} \chi_{j,i} \, e_{0,i}^{(j)}  \right |^2 \, \dx\\
   &= \left \| \sum_{j=1}^{N_v} \chi_{j,i} \, e_0^{(j)} \right \|^2_c\\
   &\leq D_6 \sum_{j=1}^{N_v} \left \| e_0^{(j)} \right \|^2_c \\
   & \leq D_6 \, D_7 \frac{1}{\Lambda_1} \sum_{i=1}^2 \sum_{j=1}^{N_v} a_i^{(j)} \left(e_{0,i}^{(j)},e_{0,i}^{(j)}\right)\\
   &  \leq D_6 \, D_7 \frac{1}{\Lambda_1} \sum_{i=1}^2 \sum_{j=1}^{N_v} a_i^{(j)} \left(u_{\tu{snap},i}^{(j)}(\cdot, 0), u_{\tu{snap},i}^{(j)}(\cdot,0)\right) \\
   & \leq D_6 \, D_7 \frac{1}{\Lambda_1} \sum_{i=1}^2 \sum_{j=1}^{N_v} a_i^{(j)} \left(u_{i}(\cdot, 0), u_{i}(\cdot,0)\right) \\
   & \leq \frac{C}{\Lambda_1} \left \|  \bfa{u}(\cdot,0) \right \|^2_{a} \,.
  \end{split}
 \end{align}
 For the case of coupled GMsFEM, similar arguments are utilized for
 \[\bfa{e}_0^{(j)} = \sum_{k> L_j} d_k^{(j)}(0) \bfa{\psi}_k^{(j)}(\bfa{x})\,.\]
\end{proof}

\section{Numerical results}\label{numer}
In this section, we present numerical results for both coupled and uncoupled GMsFEM. Let $\Omega = [0,1]^2$, and consider the following problem:
\beq
\label{eq:main_num}
\begin{split}
&\frac{\partial u_{1}}{\partial t} (\bfa{x},t)-\div (\kappa_1 (\bfa{x}) \nabla u_1(\bfa{x},t)) + \bfa{b}_1(\bfa{x})  \cdot \nabla(u_1(\bfa{x},t)-u_2(\bfa{x},t)) \\
& \; + Q_1 (u_1(\bfa{x},t)-u_2(\bfa{x},t)) = 1 \,,\\
&\frac{\partial u_{2}}{\partial t} (\bfa{x},t)-\div (\kappa_2 (\bfa{x}) \nabla u_2(\bfa{x},t)) + \bfa{b}_2(\bfa{x})  \cdot \nabla(u_2(\bfa{x},t)-u_1(\bfa{x},t)) \\
&\; + Q_2 (u_2(\bfa{x},t)-u_1(\bfa{x},t)) = 1,
\end{split}
\eeq
where we let
\beq
\bsp
&\bfa{b}_1(\bfa{x}) =10\,((1-\cos(2 \pi x_1)) \sin(2 \pi x_2 ), -\sin(2 \pi x_1)(1-\cos(2 \pi x_2))),\\
&\bfa{b}_2(\bfa{x}) = 10\,( -\sin(2 \pi x_1)(1-\cos(2 \pi x_2)), (1-\cos(2 \pi x_1)) \sin(2 \pi x_2 )).
\end{split}
\eeq
Figs.\ \ref{perm1} and \ref{perm2} indicate that the high-contrast permeability coefficients $\kappa_1$ and $\kappa_2 $ are used.
We compare the fine-scale solutions with the multiscale ones, by computing relative errors in weighted $L^2$ norm and $H^1$ semi-norm. In particular, we use 
\beq\label{errors}
\bsp
100\, ||u_{\tu{ms},i}-u_{h,i}||_{L^2_{a_i}} / ||u_{h,i}||_{L^2_{a_i}}\,,\ 100\, \|u_{\tu{ms},i}-u_{h,i}\|_{H^1_{a_i}} / \|u_{h,i}\|_{H^1_{a_i}},
\end{split}
\eeq
where $||u_i||_{L^2_{a_i}} = \displaystyle \int_\Omega \kappa_i u_i^2 \, \dx$\,, $\|u_i\|_{H^1_{a_i}} = \displaystyle \int_\Omega \kappa_i |\nabla u_i|^2 \, \dx$ (for $i=1,2$).

We denote by $DOF_{\tu{fine}}$ the number of degrees of freedom (basis functions) for fine-scale FEM.
Tables \ref{errors1}, \ref{errors2}, \ref{errors3} and \ref{errors4} represent the errors obtained from the coupled and uncoupled GMsFEM with various $Q_1$ and $Q_2$ (see Figs.\ \ref{perm_Q} and \ref{perm_c}).  From Tables \ref{errors1} and \ref{errors2}, we observe that the coupled GMsFEM has higher accuracy compared with the uncoupled GMsFEM, when $Q_1$ and $Q_2$ are large and positive.  Tables \ref{errors3} and \ref{errors4} show that both of the coupled and uncoupled GMsFEM still have good convergence with some negative $Q_1$ and $Q_2$.  Fig.\ \ref{Solution_plot} represents solutions $u_1$ obtained from the FEM and GMsFEM. 

\begin{figure}[H]
	\centering
	\begin{subfigure}{0.45\textwidth}
  \includegraphics[width=\textwidth]{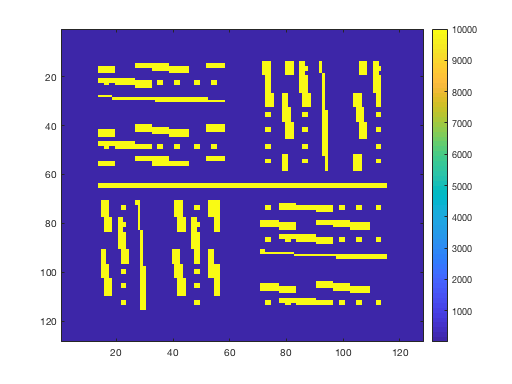}
  \caption{$\kappa_1(\bfa{x})$. The value in each channel is $10^4$.}
  \label{perm1}
\end{subfigure}
\hfill
   \begin{subfigure}{0.45\textwidth}
  \includegraphics[width=\textwidth]{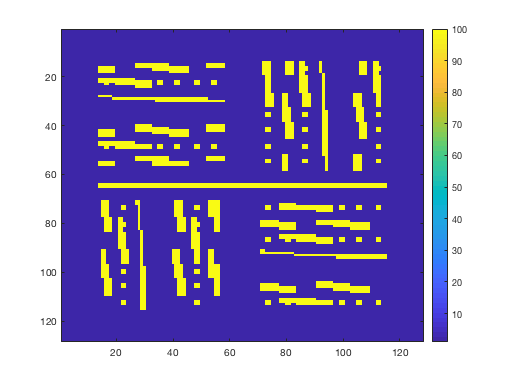}
  \caption{$\kappa_2(\bfa{x})$. The value in each channel is 100.}
  \label{perm2}
 \end{subfigure}
 \label{perm0}
 \caption{Permeability coefficients $\kappa_1$ and $\kappa_2$ for numerical implementation.}
\end{figure}
	

\begin{figure}[H]
  \centering
    \begin{subfigure}{0.45\textwidth}
  \includegraphics[width=\textwidth]{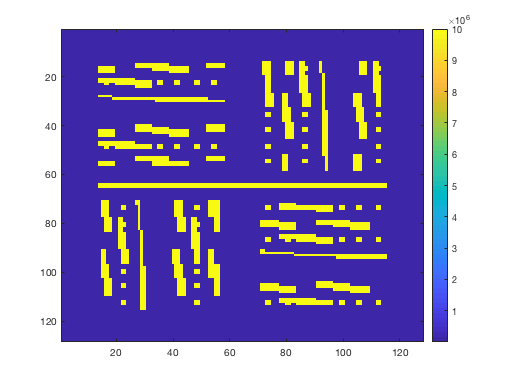}
  \caption{$\hat{Q}(\bfa{x})$. The value in each channel is $10^7$.}
  \label{perm_Q}
\end{subfigure}
\hfill
   \begin{subfigure}{0.45\textwidth}
  \includegraphics[width=\textwidth]{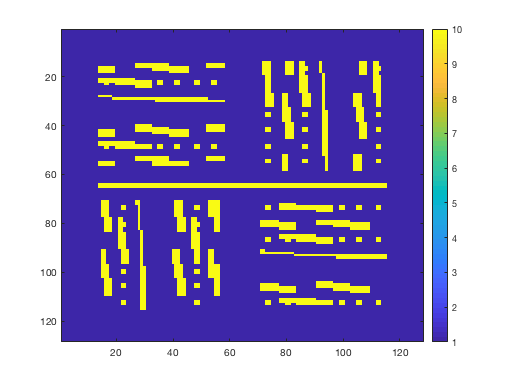}
  \caption{$\tilde{Q}(\bfa{x})$. The value in each channel is $10$.}
  \label{perm_c}
 \end{subfigure}
 \label{perm}
 \caption{Interaction coefficients $Q_1$ and $Q_2$ for numerical implementation.}
\end{figure}
\begin{figure}
  \centering
    \begin{subfigure}{0.45\textwidth}
  \includegraphics[width=\textwidth]{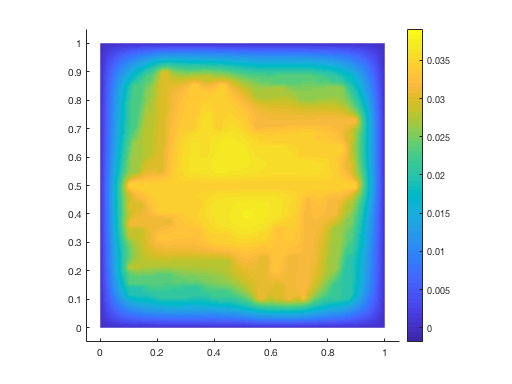}
  \caption{FEM, $u_1$, $DOF_{\tu{fine}} = 32768$.}
  \label{perm_Q2}
\end{subfigure}
  \begin{subfigure}{0.45\linewidth}
 \includegraphics[width=\linewidth]{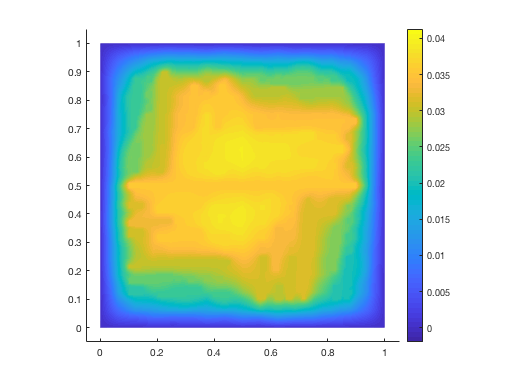}
  \caption{Coupled GMsFEM, $u_1$, $\tu{dim}(\bfa{V}_{\tu{ms}}) = 1350$.}
  \label{perm_c2}
 \end{subfigure}
 \caption{Solutions using FEM and Coupled GMsFEM.}
 \label{Solution_plot}
\end{figure}

\begin{table}
\centering
\begin{tabular}{|c|c|c|c|c|}
  \hline
 \multirow{2}{*}{dim($\bfa{V}_{\tu{ms}}$)} & \multicolumn{2}{|c|}{$u_1$}  & \multicolumn{2}{|c|}{$u_2$} \\ 
 \cline{2-5} & $H^1$ Errors(\%) & $L^2$ Errors(\%) &$H^1$ Errors(\%) & $L^2$ Errors(\%)\\
  \hline \hline
1800& 11.619 & 1.162 & 10.246 & 1.173\\
[.2em]
2700&6.994 &  0.449 & 6.811 & 0.456 \\
[.2em]
3600&6.129 & 0.335& 5.832 & 0.340\\
[.2em]
4500&5.214  & 0.223 &4.768&0.228\\
[.2em]
5400&3.726 & 0.117 &3.532&0.120\\
[.2em]
7200 &2.253 & 0.045&2.186&0.047\\

  \hline
\end{tabular} \\
\caption{ Coupled GMsFEM, $Q_1 =  Q_2 = \hat{Q}$, $DOF_{\tu{fine}}$ = 32768. 
} 
\label{errors1}
\end{table}
\begin{table}
\centering
\begin{tabular}{|c|c|c|c|c|}
  \hline
 \multirow{2}{*}{dim($\bfa{V}_{\tu{ms}}$)} & \multicolumn{2}{|c|}{$u_1$}  & \multicolumn{2}{|c|}{$u_2$} \\ 
 \cline{2-5} & $H^1$ Errors(\%) & $L^2$ Errors(\%) &$H^1$ Errors(\%) & $L^2$ Errors(\%)\\
  \hline \hline
1800& 16.170 & 2.987 & 17.450 & 2.998\\
[.2em]
2700&8.213  & 1.020& 9.976 & 1.026 \\
[.2em]
3600&6.630 & 0.756 & 8.637 & 0.760\\
[.2em]
4500&5.554  & 0.544 &7.490&0.547\\
[.2em]
5400&4.717 & 0.435 &6.776&0.438\\
[.2em]
7200 &2.712 & 0.237&5.065&0.239\\

  \hline
\end{tabular} \\
\caption{ Uncoupled GMsFEM, $ Q_1 =  Q_2 = \hat{Q}$, $DOF_{\tu{fine}}$ = 32768. 
} 
\label{errors2}
\end{table}

\begin{table}
\centering
\begin{tabular}{|c|c|c|c|c|}
  \hline
 \multirow{2}{*}{dim($\bfa{V}_{\tu{ms}}$)} & \multicolumn{2}{|c|}{$u_1$}  & \multicolumn{2}{|c|}{$u_2$} \\ 
 \cline{2-5} & $H^1$ Errors(\%) & $L^2$ Errors(\%) &$H^1$ Errors(\%) & $L^2$ Errors(\%)\\
  \hline \hline
1800& 16.051 & 2.250 & 17.558 & 2.547\\
[.2em]
2700&8.232  & 0.571 & 7.957 & 0.567 \\
[.2em]
3600&6.621 & 0.375& 6.579 & 0.381\\
[.2em]
4500&5.567  & 0.255 &5.374&0.252\\
[.2em]
5400&4.729 & 0.195 &4.578&0.179\\
[.2em]
7200 &2.696 &  0.064&2.628&0.061\\

  \hline
\end{tabular} \\
\caption{ Coupled GMsFEM, $\ Q_1 = -10 \tilde{Q}, Q_2 = - \tilde{Q}$, $DOF_{\tu{fine}}$ = 32768. 
} 
\label{errors3}
\end{table}
\begin{table}
\centering
\begin{tabular}{|c|c|c|c|c|}
  \hline
 \multirow{2}{*}{dim($\bfa{V}_{\tu{ms}}$)} & \multicolumn{2}{|c|}{$u_1$}  & \multicolumn{2}{|c|}{$u_2$} \\ 
 \cline{2-5} & $H^1$ Errors(\%) & $L^2$ Errors(\%) &$H^1$ Errors(\%) & $L^2$ Errors(\%)\\
  \hline \hline
1800& 16.233& 2.314 & 15.873 & 2.266\\
[.2em]
2700&8.213  & 0.581 & 7.951& 0.566 \\
[.2em]
3600&6.620 & 0.377 & 6.54 & 0.381\\
[.2em]
4500&5.563  & 0.258 &5.371&0.252\\
[.2em]
5400& 4.733 & 0.196 & 4.558 &0.180\\
[.2em]
7200 &2.693 & 0.064 &2.626&0.061\\
  \hline
\end{tabular} \\
\caption{ Uncoupled GMsFEM, $Q_1 = -10 \tilde{Q}, Q_2 = - \tilde{Q}$, $DOF_{\tu{fine}}$ = 32768.
} 
\label{errors4}
\end{table}

\section{Conclusions}\label{conclude}

In this paper, we propose a dual-continuum generalized multiscale finite element method (GMsFEM), to speedily and effectively solve a homogenized system of two equations (for fluid flow pressures), with new convection terms and negative interaction coefficients from \cite{rh2}.  These two equations are coupled via some interaction terms, which take into account the flow transports within each continuum and between the dual continua.  Toward this target, we assume that each continuum is globally a system, which is connected to the other throughout the domain and the form of coupling.  Such dual-continuum background can be in any general form
where the above assumptions are relevant.  Within such dual-continuum background, the multiscale flow is simulated by the GMsFEM, which systematically produces either uncoupled or coupled multiscale basis functions (called uncoupled or coupled GMsFEM, respectively).  That is, multiscale basis functions are constructed for the dual-continuum equations, separately for each equation (uncouple GMsFEM), or jointly for the system (coupled GMsFEM).  Our numerical results show that the combination of GMsFEM and dual-continuum approach is able to compute solutions with high efficiency and accuracy, which are even higher when the coupled multiscale basis functions are applied.  In a future contribution, we will extend this strategy to a dual-continuum system of homogenized nonlinear equations.

\bigskip

\vspace{20pt}

\noindent \textbf{Acknowledgements.}  

Mai's work is funded by Vietnam National Foundation for Science and 
Technology Development (NAFOSTED) under grant number 101.99-2019.326.  


\bibliographystyle{plain}
\bibliography{referencesGMsMC}

\end{document}